\providecommand{\U}[1]{\protect\rule{.1in}{.1in}}
\newtheorem{theorem}{Theorem}[section]
\newtheorem{lemma}[theorem]{Lemma}
\newtheorem{proposition}[theorem]{Proposition}
\newtheorem{question}[theorem]{Question}
\newtheorem{remark}[theorem]{Remark}
\theoremstyle{definition}
\newtheorem{definition}[theorem]{Definition}
\newtheorem{example}[theorem]{Example}
\newenvironment{statement}{\begin{quote}}{\end{quote}}
\author{Darij Grinberg\affiliationmark{1}}
\title{The Elser nuclei sum revisited}
\affiliation{Drexel University, Philadelphia, PA, USA}
\keywords{graph, simplicial complex, alternating sum, discrete Morse theory}
\begin{document}

\publicationdetails{23}{2021}{1}{15}{7012}
\maketitle

\begin{abstract}
Fix a finite undirected graph $\Gamma$ and a vertex $v$ of $\Gamma$. Let $E$
be the set of edges of $\Gamma$. We call a subset $F$ of $E$ \emph{pandemic}
if each edge of $\Gamma$ has at least one endpoint that can be connected to
$v$ by an $F$-path (i.e., a path using edges from $F$ only). In 1984, Elser
showed that the sum of $\left(  -1\right)  ^{\left\vert F\right\vert }$ over
all pandemic subsets $F$ of $E$ is $0$ if $E\neq\varnothing$. We give a simple
proof of this result via a sign-reversing involution, and discuss variants,
generalizations and a refinement using discrete Morse theory.

\end{abstract}

In \cite{Elser84}, Veit Elser studied the probabilities of clusters forming
when $n$ points are sampled randomly in a $d$-dimensional volume. In the
process, he found a purely graph-theoretical lemma \cite[Lemma 1]{Elser84},
which served a crucial role in his work. For decades, the lemma stayed hidden
from the eyes of combinatorialists in a physics journal, until it resurfaced
in recent work \cite{DHLetc19} by Dorpalen-Barry, Hettle, Livingston, Martin,
Nasr, Vega and Whitlatch. In this note, I will show a simpler proof of the
lemma using a sign-reversing involution. The proof also suggests multiple
venues of generalization that I will explore in the later sections; one
extends the lemma to a statement about arbitrary antimatroids (and even a
wider setting). Finally, I will strengthen the lemma to a Morse-theoretical
result, stating the collapsibility of a certain simplicial complex. Some open
questions will be posed.

\subsection*{Remark on alternative versions}

The current version of this paper is written with a combinatorially
experienced reader in mind. The previous arXiv version \cite{vershort}
includes more details in the proofs.

\section{\label{sec.elser}Elser's result}

Let us first introduce our setting, which is slightly more general (and
perhaps also simpler) than that used in \cite{Elser84}. (In Section
\ref{sec.abstract}, we will move to a more general setup.)

We fix an arbitrary graph $\Gamma$ with vertex set $V$ and edge set $E$. Here,
``graph'' means ``finite
undirected multigraph'' -- i.e., it can have self-loops and
parallel edges, but it has finitely many vertices and edges, and its edges are undirected.

We fix a vertex $v\in V$.

If $F\subseteq E$, then an $F$\emph{-path} shall mean a path of $\Gamma$ such
that all edges of the path belong to $F$.

If $e\in E$ is any edge and $F\subseteq E$ is any subset, then we say that $F$
\emph{infects }$e$ if there exists an $F$-path from $v$ to some endpoint of
$e$. (The terminology is inspired by the idea of an infectious disease
starting in the vertex $v$ and being transmitted along edges.)\footnote{Note
that if an edge $e$ contains the vertex $v$, then any subset $F$ of $E$ (even
the empty one) infects $e$, since there is a trivial (edgeless) $F$-path from
$v$ to $v$.}

A subset $F\subseteq E$ is said to be \emph{pandemic} if it infects each edge
$e\in E$.

\begin{example}
\label{exa.example1} Let $\Gamma$ be the following graph:%
\[%
\begin{tikzpicture}%
[-,>=stealth',shorten >=1pt,auto,node distance=2cm, thick,main node/.style={circle,fill=blue!20,draw}%
]
\node[main node] (1) {$v$};
\node[main node] [above of=1] (2) {$p$};
\node[main node] [right of=1] (3) {$w$};
\node[main node] [above of=3] (4) {$q$};
\node[main node] [right of=3] (5) {$t$};
\node[main node] [above of=5] (6) {$r$};
\path[every node/.style={font=\sffamily\small}] (1) edge node {$1$}
(2) (2) edge node {$2$} (4) (4) edge node {$3$}
(6) (6) edge [bend left] node {$4$} (5) (5) edge node {$5$}
(3) (3) edge node {$6$} (1) (5) edge [bend left] node {$7$}
(6) (4) edge node {$8$} (3);
\end{tikzpicture}%
\]
(where the vertex $v$ is the vertex labelled $v$). Then, for example, the set
$\left\{  1,2\right\}  \subseteq E$ infects edges $1,2,3,6,8$ (but none of the
other edges). The set $\left\{  1,2,5\right\}  $ infects the same edges as
$\left\{  1,2\right\}  $ (indeed, the additional edge $5$ does not increase
its infectiousness, since it is not on any $\left\{  1,2,5\right\}  $-path
from $v$). The set $\left\{  1,2,3\right\}  $ infects every edge other than
$5$. The set $\left\{  1,2,3,4\right\}  $ infects each edge, and thus is pandemic.
\end{example}

Now, we can state our version of \cite[Lemma 1]{Elser84}:

\begin{theorem}
\label{thm.elser0}Assume that $E\neq\varnothing$. Then,%
\begin{equation}
\sum_{\substack{F\subseteq E\text{ is}\\\text{pandemic}}}\left(  -1\right)
^{\left\vert F\right\vert }=0. \label{eq.thm.elser0.eq}%
\end{equation}

\end{theorem}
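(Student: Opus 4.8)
The plan is to prove \eqref{eq.thm.elser0.eq} by a sign-reversing involution on the set of pandemic subsets of $E$: I will pair each pandemic $F$ with a pandemic set $F\mathbin{\triangle}\{e(F)\}$ differing from it in a single edge $e(F)$, so the contributions $(-1)^{\left\vert F\right\vert}$ cancel in pairs. The first step is to recast the hypothesis conveniently. For $F\subseteq E$ let $R(F)$ be the set of all vertices reachable from $v$ by an $F$-path; then $F$ infects an edge $e$ exactly when $e$ has an endpoint in $R(F)$, so $F$ is pandemic iff $R(F)$ is a vertex cover of $\Gamma$ (meets every edge). Two facts will drive everything: (i) \emph{monotonicity} --- $R(F)\subseteq R(F\cup\{e\})$ for every edge $e$, so enlarging $F$ preserves pandemicity, and in particular $F\cup\{e\}$ is pandemic whenever $F$ is; (ii) every edge of a pandemic $F$ has both of its endpoints in $R(F)$, and $(R(F),F)$ is connected. (This already suggests the Morse-theoretic reading: the non-pandemic subsets form a simplicial complex, and \eqref{eq.thm.elser0.eq} amounts to its reduced Euler characteristic vanishing.)

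Two degenerate cases can be settled immediately. If every edge of $\Gamma$ is incident to $v$, then $v\in R(F)$ makes \emph{every} $F\subseteq E$ pandemic, and the left-hand side of \eqref{eq.thm.elser0.eq} is $\bigl(1+(-1)\bigr)^{\left\vert E\right\vert}=0$. If $\Gamma$ has a self-loop $\ell$, then $R(F)=R(F\mathbin{\triangle}\{\ell\})$ for all $F$ (a self-loop never helps reachability), so $F\mapsto F\mathbin{\triangle}\{\ell\}$ is already a fixed-point-free sign-reversing involution on the pandemic subsets, and we are done; the same argument applies with $\ell$ replaced by the unique edge $e_{0}$ at a degree-$1$ vertex $b\neq v$, since by (ii) and the vertex-cover description, deleting $e_{0}$ from a pandemic set leaves $R$ covering $b$'s only edge $e_{0}$ via its other endpoint, hence stays pandemic.

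The remaining case --- $\Gamma$ has no self-loop, no leaf $\ne v$, and an edge not at $v$ --- is the technical heart, because there is then no single universally toggleable edge, so $e(F)$ must genuinely depend on $F$. The naive attempt, toggling the smallest (or largest) edge $e$ for which $F\mathbin{\triangle}\{e\}$ is still pandemic, is well-defined by monotonicity but --- and this is the main obstacle --- is \emph{not} an involution in general: toggling $e$ can change which other edges are toggleable, and adding $e$ to a non-pandemic deletion can make it pandemic, breaking the required identity $e(F\mathbin{\triangle}\{e(F)\})=e(F)$. The fix is to read $e(F)$ off the reachability structure rather than off a bare ordering of $E$: fix a linear order on $V$; if $R(F)\neq V$, let $u$ be the smallest vertex outside $R(F)$, observe that (since $R(F)$ is a cover) every edge at $u$ has its other endpoint in $R(F)$ and is therefore absent from $F$, and take $e(F)$ to be the smallest such edge, so that adding it enlarges $R(F)$ by exactly the vertex $u$; if $R(F)=V$, take $e(F)$ to be a canonical reachability-irrelevant edge. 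The work --- and I expect this bookkeeping, possibly with a further split according to how the "growing'' and "neutral'' moves interlock, to be the real obstacle --- is to check that $\Phi(F):=F\mathbin{\triangle}\{e(F)\}$ is a well-defined fixed-point-free involution changing $\left\vert F\right\vert$ by exactly $1$; granting that, summing $(-1)^{\left\vert F\right\vert}$ over the resulting pairs yields \eqref{eq.thm.elser0.eq}.
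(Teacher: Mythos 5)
Your plan --- a sign-reversing involution defined directly on the pandemic subsets --- is the natural first idea, but, as you yourself suspect, the construction you sketch does not close, and the gap is not mere bookkeeping. Take the core branch ($R(F)\neq V$): you set $\Phi(F)=F\cup\{e(F)\}$, where $e(F)$ is the smallest edge at the smallest vertex $u\notin R(F)$. Then (as you note) $R(\Phi(F))=R(F)\cup\{u\}$; but if this is still $\neq V$, the smallest vertex outside $R(\Phi(F))$ is strictly larger than $u$, so $e(\Phi(F))$ is a different edge, and $\Phi(\Phi(F))=\Phi(F)\cup\{e(\Phi(F))\}\supsetneq\Phi(F)\neq F$. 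In other words, on pandemic $F$ with $R(F)\neq V$ your rule only ever \emph{enlarges} $F$, so it cannot be its own inverse within that regime; the ``adding'' moves would have to be paired against ``removing'' moves that only occur once $R$ reaches $V$, and nothing in your definition makes those two halves match up. The $R(F)=V$ branch is likewise unfinished: you invoke a ``canonical reachability-irrelevant edge'' without defining it or showing it exists, and this is nontrivial, since removing an edge from a pandemic $F$ with $R(F)=V$ generally changes $R(F)$, and one must argue that some removal nonetheless preserves the vertex-cover property.

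The paper avoids this entirely by \emph{not} building an involution on the pandemic sets. It instead builds a complete matching on the complementary family $\mathcal{A}=\{F\subseteq E \ \mid\ E\not\subseteq\operatorname*{Shade}F\}$ of non-pandemic subsets: for $F\in\mathcal{A}$, pick the smallest edge $\varepsilon(F)\in E\setminus\operatorname*{Shade}F$ (which exists precisely because $F$ is not pandemic) and toggle it. The reason this works with no case analysis is Lemma~\ref{lem.Shade-tog}: toggling an edge that lies \emph{outside} the shade leaves the shade unchanged, hence leaves $\varepsilon$ unchanged, so the toggle is automatically a fixed-point-free involution --- no ordering on $V$, no special handling of self-loops or leaves or of the ``everything at $v$'' case. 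Then
\[
\sum_{F\text{ pandemic}}(-1)^{|F|}\ =\ \sum_{F\subseteq E}(-1)^{|F|}\ -\ \sum_{F\in\mathcal{A}}(-1)^{|F|}\ =\ 0-0\ =\ 0.
\]
This ``pass to the complement'' indirection is the missing idea in your proposal: on non-pandemic sets a freely toggleable edge presents itself automatically, whereas on pandemic sets, as your attempt illustrates, it does not.
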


\begin{example}
\label{exa.example2} Let $\Gamma$ be the following graph:%
\[%
\begin{tikzpicture}%
[-,>=stealth',shorten >=1pt,auto,node distance=2cm, thick,main node/.style={circle,fill=blue!20,draw}%
]
\node[main node] (1) {$v$};
\node[main node] [above of=1] (2) {$p$};
\node[main node] [right of=2] (3) {$q$};
\node[main node] [right of=1] (4) {$w$};
\path[every node/.style={font=\sffamily\small}] (1) edge node {$1$}
(2) (2) edge node {$2$} (3) (3) edge node {$3$} (4) (4) edge node {$4$} (1);
\end{tikzpicture}%
\]
(where the vertex $v$ is the vertex labelled $v$). Then, the pandemic subsets
of $E$ are the sets%
\[
\left\{  1,2\right\}  ,\ \ \left\{  1,4\right\}  ,\ \ \left\{  3,4\right\}
,\ \ \left\{  1,2,3\right\}  ,\ \ \left\{  1,3,4\right\}  ,\ \ \left\{
1,2,4\right\}  ,\ \ \left\{  2,3,4\right\}  ,\ \ \left\{  1,2,3,4\right\}  .
\]
The sizes of these subsets are $2,2,2,3,3,3,3,4$, respectively. Hence,
(\ref{eq.thm.elser0.eq}) says that%
\[
\left(  -1\right)  ^{2}+\left(  -1\right)  ^{2}+\left(  -1\right)
^{2}+\left(  -1\right)  ^{3}+\left(  -1\right)  ^{3}+\left(  -1\right)
^{3}+\left(  -1\right)  ^{3}+\left(  -1\right)  ^{4}=0.
\]

\end{example}

We note that the equality (\ref{eq.thm.elser0.eq}) can be restated as
``there are equally many pandemic subsets $F\subseteq E$ of
even size and pandemic subsets $F\subseteq E$ of odd size''.
Thus, in particular, the number of all pandemic subsets $F$ of $E$ is even
(when $E\neq\varnothing$).

\begin{remark}
Theorem \ref{thm.elser0} is a bit more general than \cite[Lemma 1]{Elser84}.
To see why, we assume that the graph $\Gamma$ is connected and simple (i.e.,
has no self-loops and parallel edges). Then, a \emph{nucleus} is defined in
\cite{Elser84} as a subgraph $N$ of $\Gamma$ with the properties that

\begin{enumerate}
\item the subgraph $N$ is connected, and

\item each edge of $\Gamma$ has at least one endpoint in $N$.
\end{enumerate}

\noindent Given a subgraph $N$ of $\Gamma$, we let $\operatorname*{E}\left(
N\right)  $ denote the set of all edges of $N$. Now, \cite[Lemma 1]{Elser84}
claims that if $E\neq\varnothing$, then%
\[
\sum_{\substack{N\text{ is a nucleus}\\\text{containing }v}}\left(  -1\right)
^{\left\vert \operatorname*{E}\left(  N\right)  \right\vert }=0.
\]

But this is equivalent to (\ref{eq.thm.elser0.eq}), because there is a
bijection%
\begin{align*}
\left\{  \text{nuclei containing }v\right\}   &  \rightarrow\left\{
\text{pandemic subsets }F\subseteq E\right\}  ,\\
N  &  \mapsto\operatorname*{E}\left(  N\right)  .
\end{align*}
We leave it to the reader to check this in detail; what needs to be checked
are the following three statements:

\begin{itemize}
\item If $N$ is a nucleus containing $v$, then $\operatorname*{E}\left(
N\right)  $ is a pandemic subset of $E$.

\item Every nucleus $N$ containing $v$ is uniquely determined by the set
$\operatorname*{E}\left(  N\right)  $. (Indeed, since a nucleus has to be
connected, each of its vertices must be an endpoint of one of its edges,
unless its only vertex is $v$.)

\item If $F$ is a pandemic subset of $E$, then there is a nucleus $N$
containing $v$ such that $\operatorname*{E}\left(  N\right)  =F$. (Indeed, $N$
can be defined as the subgraph of $\Gamma$ whose vertices are the endpoints of
all edges in $F$ as well as the vertex $v$, and whose edges are the edges in
$F$. To see that this subgraph $N$ is connected, it suffices to argue that
each of its vertices has a path to $v$; but this follows from the definition
of ``pandemic'', since each vertex of $N$
other than $v$ belongs to at least one edge in $F$.)
\end{itemize}

Thus, Theorem \ref{thm.elser0} is equivalent to \cite[Lemma 1]{Elser84} in the
case when $\Gamma$ is connected and simple.
\end{remark}

\begin{remark}
It might appear more natural to talk about a subset $F\subseteq E$ infecting a
vertex rather than an edge. (Namely, we can say that $F$ infects a vertex $w$
if there is an $F$-path from $v$ to $w$.) However, the analogue of Theorem
\ref{thm.elser0} in which pandemicity is defined via infecting all vertices is
not true. The graph of Example \ref{exa.example2} provides a counterexample.
\end{remark}

\section{The proof}

\subsection{\label{subsect.sets}Set-theoretical notions}

We shall first introduce some concepts and notations pertaining to arbitrary
sets. They will aid us in proving Theorem \ref{thm.elser0}, and also in
generalizing it later on.

\begin{definition}
\label{def.sets.prec-sets}Let $A$ and $B$ be two sets. Then, we say that
$A\prec B$ if we have $B=A\cup\left\{  b\right\}  $ for some $b\in B\setminus
A$. Equivalently, $A\prec B$ holds if and only if $A\subseteq B$ and
$\left\vert B\setminus A\right\vert =1$.
\end{definition}

\begin{definition}
\label{def.sets.PE}We let $\mathcal{P}\left(  E\right)  $ denote the power set
of $E$ (that is, the set of all subsets of $E$).
\end{definition}

\begin{definition}
\label{def.sets.invol}Let $\mathcal{A}$ be a set. A map $\mu:\mathcal{A}%
\rightarrow\mathcal{A}$ is said to be an \emph{involution} if $\mu\circ
\mu=\operatorname*{id}\nolimits_{\mathcal{A}}$.
\end{definition}

\begin{definition}
\label{def.sets.comp-match}Let $\mathcal{A}$ be a subset of $\mathcal{P}%
\left(  E\right)  $. A \emph{complete matching} of $\mathcal{A}$ shall mean an
involution $\mu:\mathcal{A}\rightarrow\mathcal{A}$ with the property that each
$F\in\mathcal{A}$ satisfies%
\begin{equation}
\text{either }\mu\left(  F\right)  \prec F\text{ or }F\prec\mu\left(
F\right)  . \label{eq.def.sets.comp-match.F-muF}%
\end{equation}

\end{definition}

The following simple fact abstracts an idea that will be used at least twice:

\begin{lemma}
\label{lem.sets.srvinv}Let $\mathcal{A}$ be a subset of $\mathcal{P}\left(
E\right)  $. Let $\mu:\mathcal{A}\rightarrow\mathcal{A}$ be a complete
matching of $\mathcal{A}$. Then,%
\begin{equation}
\sum_{F\in\mathcal{A}}\left(  -1\right)  ^{\left\vert F\right\vert }=0.
\label{eq.lem.sets.srvinv.eq}%
\end{equation}

\end{lemma}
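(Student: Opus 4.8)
The plan is to exploit the fact that a complete matching $\mu$ of $\mathcal{A}$ is an involution that pairs up sets differing by exactly one element, and that such a pair always contributes $0$ to the alternating sum. Concretely, I would first observe that $\mu$ partitions $\mathcal{A}$ into orbits, each of which has size $1$ or $2$ since $\mu\circ\mu=\operatorname{id}_{\mathcal{A}}$. The key point is that there are in fact \emph{no} orbits of size $1$: if $\mu(F)=F$ for some $F\in\mathcal{A}$, then the condition \eqref{eq.def.sets.comp-match.F-muF} would force either $F\prec F$ or $F\prec F$, but $F\prec F$ is impossible (it would require $|F\setminus F|=1$, i.e. $|\varnothing|=1$). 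Hence every orbit is a two-element set $\{F,\mu(F)\}$ with $F\neq\mu(F)$.

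Next I would analyze the contribution of a single two-element orbit $\{F,\mu(F)\}$ to the left-hand side of \eqref{eq.lem.sets.srvinv.eq}. By \eqref{eq.def.sets.comp-match.F-muF}, either $\mu(F)\prec F$ or $F\prec\mu(F)$; in either case, $F$ and $\mu(F)$ differ in size by exactly $1$, so $\left\vert F\right\vert$ and $\left\vert \mu(F)\right\vert$ have opposite parities. Therefore $(-1)^{\left\vert F\right\vert}+(-1)^{\left\vert \mu(F)\right\vert}=0$. Summing this over all two-element orbits — equivalently, grouping the sum $\sum_{F\in\mathcal{A}}(-1)^{\left\vert F\right\vert}$ according to the orbit partition — yields $0$, which is exactly \eqref{eq.lem.sets.srvinv.eq}.

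To make the orbit-summation step fully rigorous without invoking a formal theory of group actions, I would instead phrase it as a standard telescoping/cancellation argument: pick, for each orbit, a distinguished representative (say, the smaller of the two sets under $\subseteq$, which is well-defined by the previous paragraph), let $\mathcal{R}$ be the set of these representatives, and write $\sum_{F\in\mathcal{A}}(-1)^{\left\vert F\right\vert}=\sum_{F\in\mathcal{R}}\bigl((-1)^{\left\vert F\right\vert}+(-1)^{\left\vert \mu(F)\right\vert}\bigr)=\sum_{F\in\mathcal{R}}0=0$. The main (and only) obstacle here is the bookkeeping needed to justify that $\mathcal{A}$ is the disjoint union of the sets $\{F,\mu(F)\}$ over $F\in\mathcal{R}$ — i.e. that $\mu$ has no fixed points and that distinct representatives give disjoint pairs — but both facts follow immediately from $\mu$ being an involution together with the no-fixed-point observation above. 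Everything else is routine parity arithmetic.
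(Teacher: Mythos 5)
Your proof is correct and follows essentially the same approach as the paper's: observe that $\mu$ has no fixed points (since $F\prec F$ is impossible), so $\mu$ partitions $\mathcal{A}$ into two-element orbits each contributing $(-1)^{|F|}+(-1)^{|\mu(F)|}=0$. The extra bookkeeping you add with the representative set $\mathcal{R}$ is a harmless elaboration of the same argument.
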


\begin{proof}
This is a standard argument in enumerative combinatorics (see \cite[(2.3)]%
{Sagan20} or \cite{BenQui08} for a more general viewpoint). Here is the proof:
For each $F\in\mathcal{A}$, we have $\left\vert \mu\left(  F\right)
\right\vert =\left\vert F\right\vert \pm1$ (because
(\ref{eq.def.sets.comp-match.F-muF}) shows that the sets $F$ and $\mu\left(
F\right)  $ differ in exactly one element) and thus $\left(  -1\right)
^{\left\vert \mu\left(  F\right)  \right\vert }+\left(  -1\right)
^{\left\vert F\right\vert }=0$. This shows, in particular, that $\mu$ has no
fixed points. Thus, the involution $\mu$ partitions the set $\mathcal{A}$ into
$2$-element subsets $\left\{  F,\mu\left(  F\right)  \right\}  $. Each such
$2$-element subset contributes $\left(  -1\right)  ^{\left\vert \mu\left(
F\right)  \right\vert }+\left(  -1\right)  ^{\left\vert F\right\vert }=0$ to
the sum on the left hand side of (\ref{eq.lem.sets.srvinv.eq}). Hence, this
sum is $0$. This proves Lemma \ref{lem.sets.srvinv}.
\end{proof}

\subsection{Shades}

Next, we shall introduce the notion of a \emph{shade}; this will be crucial to
proving and generalizing Theorem \ref{thm.elser0}.

\begin{definition}
\label{def.Shade}Let $F$ be a subset of $E$. Then, we define a subset
$\operatorname*{Shade}F$ of $E$ by%
\begin{equation}
\operatorname*{Shade}F=\left\{  e\in E\ \mid\ F\text{ infects }e\right\}  .
\label{eq.def.Shade.eq}%
\end{equation}
We refer to $\operatorname*{Shade}F$ as the \emph{shade} of $F$.
\end{definition}

Thus, the shade of a subset $F\subseteq E$ is the set of all edges of $\Gamma$
that are infected by $F$. (In more standard graph-theoretical lingo, this
means that $\operatorname*{Shade}F$ is the set of edges that contain at least
one vertex of the connected component containing $v$ of the graph $\left(
V,F\right)  $.)

\begin{example}
In Example \ref{exa.example1}, we have $\operatorname*{Shade}\left\{
1,2\right\}  =\left\{  1,2,3,6,8\right\}  $ and $\operatorname*{Shade}\left\{
1\right\}  =\left\{  1,2,6\right\}  $ and $\operatorname*{Shade}\left\{
8\right\}  =\left\{  1,6\right\}  $.
\end{example}

The following property of shades is rather obvious:

\begin{lemma}
\label{lem.Shade-monoton}Let $A$ and $B$ be two subsets of $E$ such that
$A\subseteq B$. Then, $\operatorname*{Shade}A\subseteq\operatorname*{Shade}B$.
\end{lemma}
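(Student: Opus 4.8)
The plan is to unfold the definition of $\operatorname*{Shade}$ and reduce the claim to a statement about $F$-paths. Concretely, let $e \in \operatorname*{Shade} A$; I want to show $e \in \operatorname*{Shade} B$. By \eqref{eq.def.Shade.eq}, the assumption $e \in \operatorname*{Shade} A$ means that $A$ infects $e$, i.e., there is an $A$-path from $v$ to some endpoint of $e$. I would then observe that every $A$-path is automatically a $B$-path, since an $A$-path is a path all of whose edges lie in $A \subseteq B$, hence all of whose edges lie in $B$. Therefore the same path witnesses that $B$ infects $e$, so $e \in \operatorname*{Shade} B$.

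Since $e$ was an arbitrary element of $\operatorname*{Shade} A$, this shows $\operatorname*{Shade} A \subseteq \operatorname*{Shade} B$, which is the claim. There is essentially no obstacle here: the monotonicity of $\operatorname*{Shade}$ is a direct consequence of the monotonicity of the notion ``$F$ infects $e$'' in $F$, which in turn is immediate from the fact that the class of $F$-paths grows as $F$ grows. The only thing to be careful about is bookkeeping — making sure the quantifier ``there exists an $F$-path from $v$ to some endpoint of $e$'' is transported correctly from $A$ to $B$ — but this is routine.

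If one wished to use the reformulation given in the remark after Definition~\ref{def.Shade} (namely that $\operatorname*{Shade} F$ is the set of edges meeting the connected component of $v$ in the graph $(V, F)$), one could alternatively argue that enlarging $F$ from $A$ to $B$ can only enlarge this connected component, and hence can only enlarge the set of edges touching it; but the path-based argument above is the most elementary and is the one I would write out.
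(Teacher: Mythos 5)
Your proof is correct and is exactly the standard argument: an $A$-path is automatically a $B$-path when $A\subseteq B$, so infection by $A$ implies infection by $B$. The paper itself omits the proof of this lemma (labeling it ``rather obvious''), and what you wrote is plainly the intended reasoning.
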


The major property of shades that we will need is the following:

\begin{lemma}
\label{lem.Shade-tog}Let $F$ be a subset of $E$. Let $u\in E$ be such that
$u\notin\operatorname*{Shade}F$. Then,%
\begin{equation}
\operatorname*{Shade}\left(  F\cup\left\{  u\right\}  \right)
=\operatorname*{Shade}F \label{eq.lem.Shade-tog.union}%
\end{equation}
and%
\begin{equation}
\operatorname*{Shade}\left(  F\setminus\left\{  u\right\}  \right)
=\operatorname*{Shade}F. \label{eq.lem.Shade-tog.diff}%
\end{equation}

\end{lemma}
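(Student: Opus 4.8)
The plan is to deduce both equalities from a single \emph{workhorse claim}: if $u\notin\operatorname*{Shade}F$ and a vertex $w$ can be joined to $v$ by a path using only edges from $F\cup\{u\}$, then $w$ can in fact be joined to $v$ by a path using only edges from $F\setminus\{u\}$.

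To prove the workhorse claim, I would take such a path $P$ from $v$ to $w$ and show that $P$ cannot traverse the edge $u$. Indeed, suppose it did; since a path traverses each of its edges exactly once, the portion of $P$ preceding its (unique) traversal of $u$ is a path from $v$ to an endpoint of $u$ that uses only edges of $P$ other than $u$, hence only edges of $(F\cup\{u\})\setminus\{u\}\subseteq F$. This exhibits an $F$-path from $v$ to an endpoint of $u$, so $F$ infects $u$ — contradicting $u\notin\operatorname*{Shade}F$. Hence $P$ avoids $u$, so all of its edges lie in $(F\cup\{u\})\setminus\{u\}\subseteq F\setminus\{u\}$, which proves the claim. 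The only subtlety is the convention that a path has no repeated vertices, and hence no repeated edges, so that ``the unique traversal of $u$'' is meaningful; under a walk-based definition one would insert a routine pruning step, with no change in substance.

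Granting the workhorse claim, the lemma is essentially formal. From $F\setminus\{u\}\subseteq F\subseteq F\cup\{u\}$ and Lemma~\ref{lem.Shade-monoton} we obtain
\[
\operatorname*{Shade}\left(F\setminus\{u\}\right)\subseteq\operatorname*{Shade}F\subseteq\operatorname*{Shade}\left(F\cup\{u\}\right).
\]
Conversely, any $e\in\operatorname*{Shade}\left(F\cup\{u\}\right)$ has an endpoint that is joined to $v$ by an $(F\cup\{u\})$-path, hence — by the workhorse claim — by an $(F\setminus\{u\})$-path, so $e\in\operatorname*{Shade}\left(F\setminus\{u\}\right)$; thus $\operatorname*{Shade}\left(F\cup\{u\}\right)\subseteq\operatorname*{Shade}\left(F\setminus\{u\}\right)$. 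The two chains of inclusions force all three sets to coincide, which is precisely~(\ref{eq.lem.Shade-tog.union}) together with~(\ref{eq.lem.Shade-tog.diff}). I expect no genuine difficulty here: the entire content sits in the workhorse claim, which is an elementary observation about cutting a path at a vertex.
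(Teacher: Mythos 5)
Your proof is correct and rests on exactly the same key observation as the paper's: since $u\notin\operatorname*{Shade}F$, any $(F\cup\{u\})$-path starting at $v$ cannot traverse $u$ (else its initial segment up to the first endpoint of $u$ would be an $F$-path witnessing $F$ infects $u$). The only cosmetic difference is that you extract the slightly stronger conclusion that such a path is an $(F\setminus\{u\})$-path and thereby obtain both equalities in one pass via $\operatorname*{Shade}(F\cup\{u\})\subseteq\operatorname*{Shade}(F\setminus\{u\})$, whereas the paper first proves (\ref{eq.lem.Shade-tog.union}) and then derives (\ref{eq.lem.Shade-tog.diff}) by applying (\ref{eq.lem.Shade-tog.union}) to $F\setminus\{u\}$ in place of $F$.
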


\begin{proof}
There is no $F$-path from $v$ to any endpoint of $u$ (since $u\notin%
\operatorname*{Shade}F$). Hence, any $\left(  F\cup\left\{  u\right\}
\right)  $-path that starts at $v$ must be an $F$-path (as it would otherwise
use the edge $u$ and thus contain an $F$-path from $v$ to some endpoint of
$u$). This entails $\operatorname*{Shade}\left(  F\cup\left\{  u\right\}
\right)  \subseteq\operatorname*{Shade}F$. Combined with the opposite
inclusion (which follows from Lemma \ref{lem.Shade-monoton}), this yields
(\ref{eq.lem.Shade-tog.union}).

Also, Lemma \ref{lem.Shade-monoton} yields $\operatorname*{Shade}\left(
F\setminus\left\{  u\right\}  \right)  \subseteq\operatorname*{Shade}F$, so
that $u\notin\operatorname*{Shade}\left(  F\setminus\left\{  u\right\}
\right)  $. Hence, (\ref{eq.lem.Shade-tog.diff}) follows by applying
(\ref{eq.lem.Shade-tog.union}) to $F\setminus\left\{  u\right\}  $ instead of
$F$.
\end{proof}

\subsection{A slightly more general claim}

Lemma \ref{lem.Shade-tog} might not look very powerful, but it contains all we
need to prove Theorem \ref{thm.elser0}. Better yet, we shall prove the
following slightly more general version of Theorem \ref{thm.elser0}:

\begin{theorem}
\label{thm.elser-Shade1}Let $G$ be any subset of $E$. Assume that
$E\neq\varnothing$. Then,%
\[
\sum_{\substack{F\subseteq E;\\G\subseteq\operatorname*{Shade}F}}\left(
-1\right)  ^{\left\vert F\right\vert }=0.
\]

\end{theorem}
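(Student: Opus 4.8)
The plan is to construct a complete matching of the set $\mathcal{A} = \left\{ F \subseteq E \ \mid\ G \subseteq \operatorname*{Shade} F \right\}$ and then invoke Lemma \ref{lem.sets.srvinv}. The key observation driving the matching is Lemma \ref{lem.Shade-tog}: toggling an edge $u$ that is \emph{not} in $\operatorname*{Shade} F$ leaves the shade unchanged, hence preserves membership in $\mathcal{A}$. So I want to pick, for each $F \in \mathcal{A}$, a canonical edge $u \notin \operatorname*{Shade} F$ and define $\mu(F)$ to be $F \cup \{u\}$ or $F \setminus \{u\}$ according to whether $u \in F$ or not.

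First I would fix an arbitrary edge $e_0 \in E$ (this uses $E \neq \varnothing$). For a given $F \in \mathcal{A}$, consider the complement $E \setminus \operatorname*{Shade} F$. If this set is nonempty, let $u$ be, say, its minimal element with respect to some fixed total order on $E$ (or any deterministic choice), and set $\mu(F) = F \,\triangle\, \{u\}$ (symmetric difference). By Lemma \ref{lem.Shade-tog}, $\operatorname*{Shade}(\mu(F)) = \operatorname*{Shade} F \supseteq G$, so $\mu(F) \in \mathcal{A}$; moreover $E \setminus \operatorname*{Shade}(\mu(F)) = E \setminus \operatorname*{Shade} F$, so the canonical choice of toggled edge is the \emph{same} $u$ for $\mu(F)$, which immediately gives $\mu(\mu(F)) = F$ and shows $\mu$ is an involution. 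It also satisfies \eqref{eq.def.sets.comp-match.F-muF} since $F$ and $\mu(F)$ differ in exactly the one element $u$. The one case left is when $E \setminus \operatorname*{Shade} F = \varnothing$, i.e.\ $\operatorname*{Shade} F = E$; here I would need to define $\mu(F)$ differently, and this is the main obstacle.

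For the case $\operatorname*{Shade} F = E$, the natural fix is to toggle the fixed edge $e_0$ instead: set $\mu(F) = F \,\triangle\, \{e_0\}$. I must check this stays within $\mathcal{A}$ and is compatible with the rest of the involution. Staying in $\mathcal{A}$ is easy: by Lemma \ref{lem.Shade-monoton}, enlarging $F$ can only enlarge the shade, so $F \cup \{e_0\}$ still has shade $E$; and for $F \setminus \{e_0\}$ I claim $\operatorname*{Shade}(F \setminus \{e_0\}) = E$ as well — indeed $e_0 \in \operatorname*{Shade} F = \operatorname*{Shade}((F\setminus\{e_0\}) \cup \{e_0\})$, so if we had $e_0 \notin \operatorname*{Shade}(F \setminus \{e_0\})$ then Lemma \ref{lem.Shade-tog} would force $\operatorname*{Shade}((F\setminus\{e_0\})\cup\{e_0\}) = \operatorname*{Shade}(F\setminus\{e_0\}) \not\ni e_0$, a contradiction; hence $e_0 \in \operatorname*{Shade}(F\setminus\{e_0\})$, and then applying Lemma \ref{lem.Shade-monoton} and Lemma \ref{lem.Shade-tog} in the right order (or simply re-running the union argument) shows the shade is still all of $E$. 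So both $F$ and $\mu(F)$ lie in the sub-collection $\mathcal{B} = \{F \in \mathcal{A} \mid \operatorname*{Shade} F = E\}$, and on $\mathcal{B}$ the map $F \mapsto F \,\triangle\, \{e_0\}$ is a genuine involution satisfying \eqref{eq.def.sets.comp-match.F-muF}.

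Finally I would assemble the two pieces: $\mu$ maps $\mathcal{B}$ to itself by toggling $e_0$, and maps $\mathcal{A} \setminus \mathcal{B}$ to itself by toggling the canonical edge $u \notin \operatorname*{Shade} F$; since the defining condition ($\operatorname*{Shade} F = E$ or not) is preserved under $\mu$, the two rules are consistent and $\mu$ is a well-defined involution on all of $\mathcal{A}$ satisfying \eqref{eq.def.sets.comp-match.F-muF}, i.e.\ a complete matching. Then Lemma \ref{lem.sets.srvinv} gives $\sum_{F \in \mathcal{A}} (-1)^{|F|} = 0$, which is exactly the claim. The only subtlety to watch is the boundary between the two cases — I expect the cleanest write-up actually handles it uniformly by saying: ``let $u$ be the minimal element of $(E \setminus \operatorname*{Shade} F) \cup \{e_0\}$,'' since when $E \setminus \operatorname*{Shade} F \neq \varnothing$ this set's minimum lies in $E \setminus \operatorname*{Shade} F$ (as it is then a nonempty subset of that set together with one extra element, and... actually one must be slightly careful that $e_0$ isn't smaller) — so perhaps more robustly: if $\operatorname*{Shade} F \neq E$ use the minimal edge outside the shade, else use $e_0$; this phrasing makes the case distinction explicit and the proof goes through as above.
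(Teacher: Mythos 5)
The case $\operatorname{Shade}F = E$ is where your argument breaks, and it breaks irreparably. You claim that if $\operatorname{Shade}F = E$ and $e_0\in F$, then $\operatorname{Shade}\left(F\setminus\left\{e_0\right\}\right)=E$ as well, so that toggling $e_0$ stays inside $\mathcal{B}$. Your intermediate deduction that $e_0\in\operatorname{Shade}\left(F\setminus\left\{e_0\right\}\right)$ is correct (it follows from Lemma \ref{lem.Shade-tog}), but the final jump to ``the shade is still all of $E$'' is unjustified and simply false. Take the $4$-cycle from Example \ref{exa.example2} with edges $1=vp$, $2=pq$, $3=qw$, $4=wv$, and let $F=\left\{1,2\right\}$ and $e_0=1$. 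Then $\operatorname{Shade}F=E$, but $\operatorname{Shade}\left(F\setminus\left\{e_0\right\}\right)=\operatorname{Shade}\left\{2\right\}=\left\{1,4\right\}\neq E$. So if $G=E$ (the original Elser case), then $\mu\left(F\right)=\left\{2\right\}\notin\mathcal{A}$, and your $\mu$ is not even a map $\mathcal{A}\to\mathcal{A}$. Even when $\left\{2\right\}$ does happen to lie in $\mathcal{A}$ (for smaller $G$), it lands in $\mathcal{A}\setminus\mathcal{B}$, where your other rule toggles the minimal edge of $E\setminus\left\{1,4\right\}=\left\{2,3\right\}$, namely $2$, giving $\mu\left(\left\{2\right\}\right)=\varnothing\neq\left\{1,2\right\}$; so $\mu$ is not an involution either. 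The trouble is structural, not cosmetic: when $\operatorname{Shade}F=E$ there is no edge outside the shade, so there is no toggle that Lemma \ref{lem.Shade-tog} guarantees to preserve the shade, and no fixed $e_0$ will do.

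The paper sidesteps this entirely by matching the \emph{complementary} family $\left\{F\subseteq E\ \mid\ G\not\subseteq\operatorname{Shade}F\right\}$ instead (Theorem \ref{thm.elser-Shade0}): there, every $F$ has a nonempty $G\setminus\operatorname{Shade}F$, so a canonical toggleable edge always exists and Lemma \ref{lem.Shade-tog} applies uniformly with no exceptional case. One then recovers the sum over $\left\{F\ \mid\ G\subseteq\operatorname{Shade}F\right\}$ by subtracting from the full alternating sum over $\mathcal{P}\left(E\right)$, which vanishes by Lemma \ref{lem.toggle} (this is exactly where $E\neq\varnothing$ is used). If you want to salvage your write-up, replacing your set $\mathcal{A}$ by its complement and adding this final subtraction step is the minimal repair; a direct matching of $\left\{F\ \mid\ G\subseteq\operatorname{Shade}F\right\}$ does exist (it is implicit in the collapsibility statement, Theorem \ref{thm.elser-coll}), but it is not the naive ``toggle $e_0$'' map.
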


We will soon prove Theorem \ref{thm.elser-Shade1} and explain how Theorem
\ref{thm.elser0} follows from it. First, however, let us give an equivalent
(but slightly easier to prove) version of Theorem \ref{thm.elser-Shade1}:

\begin{theorem}
\label{thm.elser-Shade0}Let $G$ be any subset of $E$. Then,%
\begin{equation}
\sum_{\substack{F\subseteq E;\\G\not \subseteq \operatorname*{Shade}F}}\left(
-1\right)  ^{\left\vert F\right\vert }=0. \label{eq.thm.elser-Shade0.eq}%
\end{equation}

\end{theorem}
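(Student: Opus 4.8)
The plan is to exhibit a complete matching of the set
\[
\mathcal{A}=\left\{  F\subseteq E\ \mid\ G\not\subseteq\operatorname*{Shade}F\right\}
\]
and then apply Lemma \ref{lem.sets.srvinv} to it. The matching will simply toggle a single, canonically chosen edge in or out of $F$.

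First I would fix an arbitrary total order on the (finite) set $E$. Given $F\in\mathcal{A}$, the set $G\setminus\operatorname*{Shade}F$ is nonempty (this is exactly what $G\not\subseteq\operatorname*{Shade}F$ says), so it has a least element with respect to this order; call it $u_F$. Now define $\mu\left(  F\right)$ to be $F\cup\{u_F\}$ if $u_F\notin F$, and $F\setminus\{u_F\}$ if $u_F\in F$. In either case $\mu\left(  F\right)$ differs from $F$ in the single element $u_F$, so that either $\mu\left(  F\right)\prec F$ or $F\prec\mu\left(  F\right)$; this will give property (\ref{eq.def.sets.comp-match.F-muF}) once we know $\mu$ maps $\mathcal{A}$ to itself.

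Next I would verify that $\mu$ is a well-defined involution $\mathcal{A}\to\mathcal{A}$. Since $u_F\in G\setminus\operatorname*{Shade}F$, we have $u_F\notin\operatorname*{Shade}F$, so Lemma \ref{lem.Shade-tog} (applied with $u=u_F$) gives $\operatorname*{Shade}\left(  \mu\left(  F\right)  \right)=\operatorname*{Shade}F$ — using (\ref{eq.lem.Shade-tog.union}) in the case $u_F\notin F$ and (\ref{eq.lem.Shade-tog.diff}) in the case $u_F\in F$. In particular $u_F\in G$ but $u_F\notin\operatorname*{Shade}\left(  \mu\left(  F\right)  \right)$, so $G\not\subseteq\operatorname*{Shade}\left(  \mu\left(  F\right)  \right)$ and hence $\mu\left(  F\right)\in\mathcal{A}$. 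Furthermore $G\setminus\operatorname*{Shade}\left(  \mu\left(  F\right)  \right)=G\setminus\operatorname*{Shade}F$, so the least element of this set is still $u_F$; that is, $u_{\mu\left(  F\right)}=u_F$. Thus $\mu$ toggles the same edge out of $\mu\left(  F\right)$ as it had toggled into (or out of) $F$, which shows $\mu\left(  \mu\left(  F\right)  \right)=F$. Hence $\mu$ is a complete matching of $\mathcal{A}$, and Lemma \ref{lem.sets.srvinv} yields (\ref{eq.thm.elser-Shade0.eq}).

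The one point that requires care — and really the crux of the whole argument — is the consistency of the choice of $u_F$: the proof works only because Lemma \ref{lem.Shade-tog} guarantees that toggling $u_F$ leaves $\operatorname*{Shade}F$ unchanged, so that the ``smallest uninfected edge of $G$'' read off from $\mu\left(  F\right)$ is literally the same edge as the one read off from $F$. Everything else is routine bookkeeping. (Note, incidentally, that this proof makes no use of the hypothesis $E\neq\varnothing$ from Theorem \ref{thm.elser-Shade1}; if $E=\varnothing$, or more generally if every $F\subseteq E$ satisfies $G\subseteq\operatorname*{Shade}F$, then $\mathcal{A}=\varnothing$ and the sum is empty.)
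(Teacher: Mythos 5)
Your proposal is correct and follows essentially the same argument as the paper: choose the smallest element of $G\setminus\operatorname*{Shade}F$ (your $u_F$ is the paper's $\varepsilon(F)$), toggle it via Lemma \ref{lem.Shade-tog} to get a complete matching, and invoke Lemma \ref{lem.sets.srvinv}. Your closing observation that the crux is the invariance of $\operatorname*{Shade}F$ (hence of $u_F$) under the toggle is exactly the key point the paper emphasizes as well.
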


\begin{proof}
Let
\begin{equation}
\mathcal{A}=\left\{  F\subseteq E\ \mid\ G\not \subseteq \operatorname*{Shade}%
F\right\}  . \label{pf.thm.elser-Shade0.short.A=}%
\end{equation}
Thus, $\mathcal{A}$ is a subset of $\mathcal{P}\left(  E\right)  $, and each
$F\in\mathcal{A}$ satisfies $G\not \subseteq \operatorname*{Shade}F$.

We equip the finite set $E$ with a total order (chosen arbitrarily, but fixed
henceforth). If $F\in\mathcal{A}$, then we define $\varepsilon\left(
F\right)  $ to be the \textbf{smallest} edge $e\in G\setminus
\operatorname*{Shade}F$. (Such an edge exists, since $F\in\mathcal{A}$ entails
$G\not \subseteq \operatorname*{Shade}F$ and thus $G\setminus
\operatorname*{Shade}F\neq\varnothing$.)

For any $F\in\mathcal{A}$, we have $\varepsilon\left(  F\right)
\notin\operatorname*{Shade}F$ (by the definition of $\varepsilon\left(
F\right)  $). Thus, any $F\in\mathcal{A}$ satisfies $\operatorname*{Shade}%
\left(  F\cup\left\{  \varepsilon\left(  F\right)  \right\}  \right)
=\operatorname*{Shade}F$ (by (\ref{eq.lem.Shade-tog.union})) and
$\operatorname*{Shade}\left(  F\setminus\left\{  \varepsilon\left(  F\right)
\right\}  \right)  =\operatorname*{Shade}F$ (by (\ref{eq.lem.Shade-tog.diff}%
)). In other words, if we replace a set $F\in\mathcal{A}$ by $F\cup\left\{
\varepsilon\left(  F\right)  \right\}  $ or $F\setminus\left\{  \varepsilon
\left(  F\right)  \right\}  $, then $\operatorname*{Shade}F$ does not change.
Hence, $\varepsilon\left(  F\right)  $ does not change either (since
$\varepsilon\left(  F\right)  $ depends only on $\operatorname*{Shade}F$, but
not on $F$ itself). Furthermore, the resulting set ($F\cup\left\{
\varepsilon\left(  F\right)  \right\}  $ or $F\setminus\left\{  \varepsilon
\left(  F\right)  \right\}  $) still belongs to $\mathcal{A}$ (since
$\operatorname*{Shade}F$ has not changed). Thus, we can define a map%
\begin{align*}
\mu:\mathcal{A}  &  \rightarrow\mathcal{A},\\
F  &  \mapsto%
\begin{cases}
F\cup\left\{  \varepsilon\left(  F\right)  \right\}  , & \text{if }%
\varepsilon\left(  F\right)  \notin F;\\
F\setminus\left\{  \varepsilon\left(  F\right)  \right\}  , & \text{if
}\varepsilon\left(  F\right)  \in F.
\end{cases}
\end{align*}
Clearly, this map $\mu$ is an involution (since we have shown that
$\varepsilon\left(  F\right)  $ does not change when we replace $F$ by
$\mu\left(  F\right)  $). Moreover, this map $\mu$ is a complete matching
(since each $F\in\mathcal{A}$ satisfies $F\prec\mu\left(  F\right)  $ if
$\varepsilon\left(  F\right)  \notin F$, and satisfies $\mu\left(  F\right)
\prec F$ otherwise). Hence, Lemma \ref{lem.sets.srvinv} yields $\sum
\limits_{F\in\mathcal{A}}\left(  -1\right)  ^{\left\vert F\right\vert }=0$. In
view of how we defined $\mathcal{A}$, this is equivalent to
(\ref{eq.thm.elser-Shade0.eq}). Thus, (\ref{eq.thm.elser-Shade0.eq}) is proven.
\end{proof}

In order to derive Theorem \ref{thm.elser-Shade1} from Theorem
\ref{thm.elser-Shade0}, we need the following innocent lemma:

\begin{lemma}
\label{lem.toggle}Let $U$ be a finite set with $U\neq\varnothing$. Then,%
\[
\sum_{F\subseteq U}\left(  -1\right)  ^{\left\vert F\right\vert }=0.
\]

\end{lemma}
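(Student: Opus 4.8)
The statement to prove is Lemma~\ref{lem.toggle}: for a finite nonempty set $U$, the alternating sum $\sum_{F\subseteq U}(-1)^{|F|}$ vanishes. This is the most classical identity in the paper, so the plan is simply to recognize it as an instance of the machinery already developed and invoke Lemma~\ref{lem.sets.srvinv}, or alternatively to give the one-line binomial-theorem argument.

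The cleanest approach, in keeping with the paper's style, is to build a complete matching on $\mathcal{P}(U)$ and apply Lemma~\ref{lem.sets.srvinv}. First I would fix any element $u\in U$ (which exists since $U\neq\varnothing$). Then I would define the ``toggle'' map $\mu:\mathcal{P}(U)\to\mathcal{P}(U)$ sending $F$ to $F\cup\{u\}$ if $u\notin F$, and to $F\setminus\{u\}$ if $u\in F$. The key steps are: (i) observe that $\mu$ is well-defined with target $\mathcal{P}(U)$; (ii) check $\mu\circ\mu=\operatorname{id}$, which is immediate from the two-case definition; (iii) check that for each $F$ we have either $F\prec\mu(F)$ (in the first case, since $\mu(F)=F\cup\{u\}$ with $u\in\mu(F)\setminus F$) or $\mu(F)\prec F$ (in the second case), so that $\mu$ is a complete matching of $\mathcal{P}(U)$ in the sense of Definition~\ref{def.sets.comp-match}; (iv) apply Lemma~\ref{lem.sets.srvinv} with $\mathcal{A}=\mathcal{P}(U)$ to conclude $\sum_{F\in\mathcal{P}(U)}(-1)^{|F|}=0$, which is exactly the claim since $\mathcal{P}(U)$ is the set of all subsets of $U$.

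Alternatively, and even more briefly, one can note that $\sum_{F\subseteq U}(-1)^{|F|}=\sum_{k=0}^{|U|}\binom{|U|}{k}(-1)^k=(1-1)^{|U|}=0^{|U|}=0$, where the last equality uses $|U|\geq 1$. Either argument is complete and self-contained; I would probably present the involution version since it reuses the paper's own Lemma~\ref{lem.sets.srvinv} and foreshadows the recurring ``toggle an element'' technique.

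There is essentially no obstacle here: the only point requiring the hypothesis $U\neq\varnothing$ is that we need an element $u$ to toggle (equivalently, that $0^{|U|}=0$ rather than $0^0=1$), and everything else is routine verification of the definitions. The main ``work'' is just citing the right earlier result; the lemma is included only because it is used in deriving Theorem~\ref{thm.elser-Shade1} from Theorem~\ref{thm.elser-Shade0}.
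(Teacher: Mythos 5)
Your proposal is correct and matches the paper's own (very terse) proof, which likewise invokes Lemma~\ref{lem.sets.srvinv} and also mentions the binomial identity $\sum_{k=0}^n(-1)^k\binom{n}{k}=0$ as an alternative. You simply spell out the toggle involution that the paper leaves implicit.
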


\begin{proof}
This is an easy (and well-known) consequence of Lemma \ref{lem.sets.srvinv}.
It also follows from the well-known binomial identity $\sum_{k=0}^{n}\left(
-1\right)  ^{k}\dbinom{n}{k}=0$ that holds for any integer $n>0$.
\end{proof}

We can now easily derive Theorem \ref{thm.elser-Shade1} from Theorem
\ref{thm.elser-Shade0}:

\begin{proof}
[of Theorem \ref{thm.elser-Shade1}]We have%
\[
\sum_{F\subseteq E}\left(  -1\right)  ^{\left\vert F\right\vert }%
=\sum_{\substack{F\subseteq E;\\G\subseteq\operatorname*{Shade}F}}\left(
-1\right)  ^{\left\vert F\right\vert }+\underbrace{\sum_{\substack{F\subseteq
E;\\G\not \subseteq \operatorname*{Shade}F}}\left(  -1\right)  ^{\left\vert
F\right\vert }}_{\substack{=0\\\text{(by Theorem \ref{thm.elser-Shade0})}%
}}=\sum_{\substack{F\subseteq E;\\G\subseteq\operatorname*{Shade}F}}\left(
-1\right)  ^{\left\vert F\right\vert }.
\]
However, Lemma \ref{lem.toggle} (applied to $U=E$) shows that the left hand
side of this equality is $0$. Thus, so is the right hand side. This proves
Theorem \ref{thm.elser-Shade1}.
\end{proof}

\subsection{Proving Theorem \ref{thm.elser0}}

\begin{proof}
[of Theorem \ref{thm.elser0}]Theorem \ref{thm.elser0} follows by applying
Theorem \ref{thm.elser-Shade1} to $G=E$ (since a subset $F$ of $E$ satisfies
$E\subseteq\operatorname*{Shade}F$ if and only if it is pandemic).
\end{proof}

\section{Vertex infection and other variants}

In our study of graphs so far, we have barely ever mentioned vertices (even
though they are, of course, implicit in the notion of a path). Even though the
infection is spread from vertex to vertex, our sets so far have infected edges
(not vertices). One might thus wonder if there is also a vertex counterpart of
Theorem \ref{thm.elser0}. So let us define analogues of our notions for vertices:

If $F\subseteq V$, then an $F$\emph{-vertex-path} shall mean a path of
$\Gamma$ such that all vertices of the path except (possibly) for its two
endpoints belong to $F$. (Thus, if a path has only one edge or none, then it
automatically is an $F$-vertex-path.)

If $w\in V\setminus\left\{  v\right\}  $ is any vertex and $F\subseteq
V\setminus\left\{  v\right\}  $ is any subset, then we say that $F$
\emph{vertex-infects }$w$ if there exists an $F$-vertex-path from $v$ to $w$.
(This is always true when $w$ is $v$ or a neighbor of $v$.)

A subset $F\subseteq V\setminus\left\{  v\right\}  $ is said to be
\emph{vertex-pandemic} if it vertex-infects each vertex $w\in V\setminus
\left\{  v\right\}  $.

\begin{example}
Let $\Gamma$ be as in Example \ref{exa.example2}. Then, the path
$v\overset{1}{\longrightarrow}p\overset{2}{\longrightarrow}q$ is an
$F$-vertex-path for any subset $F\subseteq V$ that satisfies $p\in F$. The
subset $\left\{  p\right\}  $ of $V\setminus\left\{  v\right\}  $
vertex-infects each vertex (for example, $v\overset{1}{\longrightarrow
}p\overset{2}{\longrightarrow}q$ is a $\left\{  p\right\}  $-vertex-path from
$v$ to $q$, and $v\overset{4}{\longrightarrow}w$ is a $\left\{  p\right\}
$-vertex-path from $v$ to $w$), and thus is vertex-pandemic. The
vertex-pandemic subsets of $V\setminus\left\{  v\right\}  $ are the sets%
\[
\left\{  p\right\}  ,\ \ \left\{  w\right\}  ,\ \ \left\{  p,q\right\}
,\ \ \left\{  p,w\right\}  ,\ \ \left\{  q,w\right\}  ,\ \ \left\{
p,q,w\right\}  .
\]

\end{example}

We now have the following analogue of Theorem \ref{thm.elser0}:

\begin{theorem}
\label{thm.elser0-vert}Assume that $V\setminus\left\{  v\right\}
\neq\varnothing$. Then,%
\[
\sum_{\substack{F\subseteq V\setminus\left\{  v\right\}  \text{ is}%
\\\text{vertex-pandemic}}}\left(  -1\right)  ^{\left\vert F\right\vert }=0.
\]

\end{theorem}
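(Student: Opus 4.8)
The plan is to mirror the proof of Theorem~\ref{thm.elser0} verbatim, replacing the edge-infection machinery with its vertex-infection analogue. That is, I would first establish a ``vertex-shade'' operator: for $F\subseteq V\setminus\left\{v\right\}$, define $\operatorname*{VShade}F$ to be the set of all vertices $w\in V\setminus\left\{v\right\}$ that $F$ vertex-infects. Then I would prove the two analogues of Lemma~\ref{lem.Shade-monoton} and Lemma~\ref{lem.Shade-tog}: monotonicity ($A\subseteq B$ implies $\operatorname*{VShade}A\subseteq\operatorname*{VShade}B$) is immediate, and the crucial ``toggling'' lemma states that if $u\in V\setminus\left\{v\right\}$ satisfies $u\notin\operatorname*{VShade}F$, then $\operatorname*{VShade}\left(F\cup\left\{u\right\}\right)=\operatorname*{VShade}F$ and $\operatorname*{VShade}\left(F\setminus\left\{u\right\}\right)=\operatorname*{VShade}F$.

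The heart of the argument is that toggling lemma, so I would want to check its proof carefully. The key observation: if $u\notin\operatorname*{VShade}F$, there is no $F$-vertex-path from $v$ to $u$, and I claim any $\left(F\cup\left\{u\right\}\right)$-vertex-path $P$ from $v$ to some $w\neq u$ must already be an $F$-vertex-path. Indeed, if $P$ used $u$ as an interior vertex, then the initial segment of $P$ from $v$ up to $u$ would be a path whose interior vertices all lie in $F\cup\left\{u\right\}$; but since $u$ is the \emph{last} vertex of this segment and is not interior to it, all interior vertices lie in $F$, making it an $F$-vertex-path from $v$ to $u$ — contradicting $u\notin\operatorname*{VShade}F$. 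Hence $P$ avoids $u$ entirely among its interior vertices, so it is an $F$-vertex-path. This gives $\operatorname*{VShade}\left(F\cup\left\{u\right\}\right)\subseteq\operatorname*{VShade}F$; the reverse inclusion is monotonicity; and the $F\setminus\left\{u\right\}$ statement follows by applying the union statement to $F\setminus\left\{u\right\}$, exactly as in Lemma~\ref{lem.Shade-tog}.

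With these two lemmas in hand, the rest is a direct transcription of Theorems~\ref{thm.elser-Shade0} and~\ref{thm.elser-Shade1} and the derivation of Theorem~\ref{thm.elser0}. Concretely: for any $G\subseteq V\setminus\left\{v\right\}$, I would show $\sum_{F\subseteq V\setminus\left\{v\right\};\ G\not\subseteq\operatorname*{VShade}F}\left(-1\right)^{\left\vert F\right\vert}=0$ by fixing a total order on $V\setminus\left\{v\right\}$, letting $\varepsilon\left(F\right)$ be the smallest vertex in $G\setminus\operatorname*{VShade}F$ when this set is nonempty, and defining $\mu$ to toggle $\varepsilon\left(F\right)$ in and out of $F$; the vertex-toggling lemma guarantees $\operatorname*{VShade}F$ (hence $\varepsilon\left(F\right)$) is unchanged, so $\mu$ is a well-defined involution and a complete matching of $\mathcal{A}=\left\{F\subseteq V\setminus\left\{v\right\}\ \mid\ G\not\subseteq\operatorname*{VShade}F\right\}$, and Lemma~\ref{lem.sets.srvinv} finishes it. Then, splitting $\sum_{F\subseteq V\setminus\left\{v\right\}}\left(-1\right)^{\left\vert F\right\vert}$ according to whether $G\subseteq\operatorname*{VShade}F$ and invoking Lemma~\ref{lem.toggle} (applicable since $V\setminus\left\{v\right\}\neq\varnothing$) yields $\sum_{F\subseteq V\setminus\left\{v\right\};\ G\subseteq\operatorname*{VShade}F}\left(-1\right)^{\left\vert F\right\vert}=0$. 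Finally, taking $G=V\setminus\left\{v\right\}$ gives Theorem~\ref{thm.elser0-vert}, since $F$ is vertex-pandemic precisely when $V\setminus\left\{v\right\}\subseteq\operatorname*{VShade}F$.

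I do not expect any serious obstacle; the one subtlety worth stating explicitly is the ``last vertex is not interior'' point in the toggling lemma, which is exactly the place where the definition of $F$-vertex-path (interior vertices in $F$, endpoints free) does the work — this is the analogue of the sentence ``any $\left(F\cup\left\{u\right\}\right)$-path that starts at $v$ must be an $F$-path'' in the proof of Lemma~\ref{lem.Shade-tog}, and it is worth spelling out that an $F$-vertex-path from $v$ to $u$ exists as the relevant sub-path because its endpoint $u$ need not itself lie in $F$.
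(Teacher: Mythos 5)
Your proposal is correct and follows the same route the paper takes: the paper's proof of this theorem is literally the one-sentence instruction to repeat the proof of Theorem~\ref{thm.elser0} with $E$ replaced by $V\setminus\left\{v\right\}$ and the edge-infection vocabulary replaced by the vertex-infection vocabulary, and you have carried out exactly that substitution, correctly identifying the vertex-toggling lemma as the only step whose verification is not purely mechanical. One tiny remark on your toggling argument: you restrict attention to $\left(F\cup\left\{u\right\}\right)$-vertex-paths from $v$ to some $w\neq u$, but the inclusion $\operatorname*{VShade}\left(F\cup\left\{u\right\}\right)\subseteq\operatorname*{VShade}F$ also requires knowing $u\notin\operatorname*{VShade}\left(F\cup\left\{u\right\}\right)$; this follows by the same observation (a path ending at $u$ has $u$ as an endpoint, not an interior vertex, so it is already an $F$-vertex-path), and it would be cleanest to state the claim uniformly as ``any $\left(F\cup\left\{u\right\}\right)$-vertex-path starting at $v$ is an $F$-vertex-path.''
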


\begin{proof}
With a few easy modifications, our above proof of Theorem \ref{thm.elser0} can
be repurposed as a proof of Theorem \ref{thm.elser0-vert}. Most importantly,
we need to replace the set $E$ by $V\setminus\left\{  v\right\}  $, and we
need to replace the words ``edge'',
``$F$-path'', ``infects'' and ``pandemic'' by ``vertex'',
``$F$-vertex-path'', ``vertex-infects'' and ``vertex-pandemic'', respectively.
\end{proof}

Another variant of Theorem \ref{thm.elser0} (and Theorem
\ref{thm.elser-Shade1} and Theorem \ref{thm.elser-Shade0}) is obtained by
replacing the undirected graph $\Gamma$ with a directed graph (while, of
course, replacing paths by directed paths). More generally, we can replace
$\Gamma$ by a ``hybrid'' graph with some
directed and some undirected edges.\footnote{We understand that a directed
edge still has two endpoints: its source and its target.} No changes are
required to the above proofs. Yet another variation can be obtained by
replacing ``endpoint'' by ``source'' (for directed edges). We cannot,
however, replace ``endpoint'' by ``target''.

\section{\label{sec.abstract}An abstract perspective}

Seeing how little graph theory we have used in proving Theorem
\ref{thm.elser0}, and how easily the same argument adapted to Theorem
\ref{thm.elser0-vert}, we get the impression that there might be some general
theory lurking behind it. What follows is an attempt at building this theory.

Most proofs in this section are omitted; some are outlined. In fact, they are
all sufficiently simple and straightforward that the reader should have little
trouble filling them in; alternatively, almost all of them can be found in the
detailed version of \cite{vershort}.

\subsection{Shade maps}

Let $\mathcal{P}\left(  E\right)  $ denote the power set of $E$. In Definition
\ref{def.Shade}, we have encoded the ``infects'' relation as a map $\operatorname*{Shade}%
:\mathcal{P}\left(  E\right)  \rightarrow\mathcal{P}\left(  E\right)  $
defined by $\operatorname*{Shade}F=\left\{  e\in E\ \mid\ F\text{ infects
}e\right\}  $. As we recall, Theorem \ref{thm.elser-Shade1} (a generalization
of Theorem \ref{thm.elser0}) states that
\begin{equation}
\sum_{\substack{F\subseteq E;\\G\subseteq\operatorname*{Shade}F}}\left(
-1\right)  ^{\left\vert F\right\vert }=0 \label{eq.Shade.goal}%
\end{equation}
for any $G\subseteq E$, under the assumption that $E\neq\varnothing$.

To generalize this, we forget about the graph $\Gamma$ and the map
$\operatorname*{Shade}$, and instead start with an \textbf{arbitrary} finite
set $E$. (This set $E$ corresponds to the set $E$ in Theorem \ref{thm.elser0}
and to the set $V\setminus\left\{  v\right\}  $ in Theorem
\ref{thm.elser0-vert}.) Let $\mathcal{P}\left(  E\right)  $ be the power set
of $E$. Let $\operatorname*{Shade}:\mathcal{P}\left(  E\right)  \rightarrow
\mathcal{P}\left(  E\right)  $ be an arbitrary map (meant to generalize the
map $\operatorname*{Shade}$ from the previous paragraph). We may now ask:

\begin{question}
What (combinatorial) properties must $\operatorname*{Shade}$ satisfy in order
for (\ref{eq.Shade.goal}) to hold for any $G\subseteq E$ under the assumption
that $E\neq\varnothing$ ?
\end{question}

A partial answer to this question can be given by analyzing our above proof of
Theorem \ref{thm.elser-Shade1} and extracting what was used:

\begin{definition}
\label{def.Shade.shademap}Let $E$ be a set. A \emph{shade map} on $E$ shall
mean a map $\operatorname*{Shade}:\mathcal{P}\left(  E\right)  \rightarrow
\mathcal{P}\left(  E\right)  $ that satisfies the following two axioms:

\begin{statement}
\textit{Axiom 1:} If $F\in\mathcal{P}\left(  E\right)  $ and $u\in
E\setminus\operatorname*{Shade}F$, then $\operatorname*{Shade}\left(
F\cup\left\{  u\right\}  \right)  =\operatorname*{Shade}F$.
\end{statement}

\begin{statement}
\textit{Axiom 2:} If $F\in\mathcal{P}\left(  E\right)  $ and $u\in
E\setminus\operatorname*{Shade}F$, then $\operatorname*{Shade}\left(
F\setminus\left\{  u\right\}  \right)  =\operatorname*{Shade}F$.
\end{statement}
\end{definition}

\begin{theorem}
\label{thm.elser.genE0}Let $E$ be a finite set. Let $\operatorname*{Shade}%
:\mathcal{P}\left(  E\right)  \rightarrow\mathcal{P}\left(  E\right)  $ be a
shade map on $E$.

Assume that $E\neq\varnothing$. Let $G$ be any subset of $E$. Then,%
\[
\sum_{\substack{F\subseteq E;\\G\subseteq\operatorname*{Shade}F}}\left(
-1\right)  ^{\left\vert F\right\vert }=0.
\]

\end{theorem}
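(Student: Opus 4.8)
The plan is to adapt the proof of Theorem \ref{thm.elser-Shade0} and Theorem \ref{thm.elser-Shade1} almost verbatim, replacing the graph-theoretic lemmas by the two axioms of a shade map. First I would establish the abstract analogue of Theorem \ref{thm.elser-Shade0}: namely, that $\sum_{F\subseteq E;\ G\not\subseteq\operatorname*{Shade}F}\left(-1\right)^{\left\vert F\right\vert}=0$ for every $G\subseteq E$ (with no hypothesis on $E$). The key observation is that Axioms 1 and 2 are precisely the abstract versions of equations (\ref{eq.lem.Shade-tog.union}) and (\ref{eq.lem.Shade-tog.diff}) in Lemma \ref{lem.Shade-tog}, which were the only properties of $\operatorname*{Shade}$ used in that proof.

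Concretely, I would set $\mathcal{A}=\left\{F\subseteq E\ \mid\ G\not\subseteq\operatorname*{Shade}F\right\}$, fix an arbitrary total order on the finite set $E$, and for each $F\in\mathcal{A}$ let $\varepsilon\left(F\right)$ be the smallest element of $G\setminus\operatorname*{Shade}F$ (nonempty since $F\in\mathcal{A}$). Then $\varepsilon\left(F\right)\in E\setminus\operatorname*{Shade}F$, so Axiom 1 gives $\operatorname*{Shade}\left(F\cup\left\{\varepsilon\left(F\right)\right\}\right)=\operatorname*{Shade}F$ and Axiom 2 gives $\operatorname*{Shade}\left(F\setminus\left\{\varepsilon\left(F\right)\right\}\right)=\operatorname*{Shade}F$. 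Hence toggling the element $\varepsilon\left(F\right)$ in or out of $F$ leaves $\operatorname*{Shade}F$ (and therefore also $\varepsilon\left(F\right)$, which depends only on $\operatorname*{Shade}F$) unchanged, and the toggled set is still in $\mathcal{A}$. This defines the involution $\mu:\mathcal{A}\to\mathcal{A}$ sending $F$ to $F\cup\left\{\varepsilon\left(F\right)\right\}$ if $\varepsilon\left(F\right)\notin F$ and to $F\setminus\left\{\varepsilon\left(F\right)\right\}$ otherwise; it is a complete matching of $\mathcal{A}$, so Lemma \ref{lem.sets.srvinv} yields the vanishing sum.

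To finish, I would invoke the same "toggle" argument used to derive Theorem \ref{thm.elser-Shade1} from Theorem \ref{thm.elser-Shade0}: write $\sum_{F\subseteq E}\left(-1\right)^{\left\vert F\right\vert}=\sum_{F\subseteq E;\ G\subseteq\operatorname*{Shade}F}\left(-1\right)^{\left\vert F\right\vert}+\sum_{F\subseteq E;\ G\not\subseteq\operatorname*{Shade}F}\left(-1\right)^{\left\vert F\right\vert}$, note that the last sum is $0$ by the abstract version of Theorem \ref{thm.elser-Shade0} just proved, and observe that the left-hand side is $0$ by Lemma \ref{lem.toggle} applied to $U=E$ (this is exactly where the hypothesis $E\neq\varnothing$ enters). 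Consequently the middle sum vanishes, which is the claim.

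I expect essentially no obstacle here: the entire content of the earlier proof transfers mechanically once one notices that the two axioms of a shade map abstract precisely the roles played by (\ref{eq.lem.Shade-tog.union}) and (\ref{eq.lem.Shade-tog.diff}). The only point requiring a moment's care is the well-definedness of $\mu$ — specifically, that $\varepsilon\left(F\right)$ is invariant under the toggle and that $\mu\left(F\right)$ stays in $\mathcal{A}$ — but this is immediate from the fact that both axioms preserve $\operatorname*{Shade}F$ and $\varepsilon$ is a function of $\operatorname*{Shade}F$ alone. No new ideas beyond those already deployed in Section \ref{sec.elser} are needed.
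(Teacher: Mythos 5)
Your proof is correct and matches the paper's intended argument exactly: the paper's proof of Theorem \ref{thm.elser.genE0} simply says it is analogous to the proof of Theorem \ref{thm.elser-Shade1}, with Axioms 1 and 2 playing the roles of (\ref{eq.lem.Shade-tog.union}) and (\ref{eq.lem.Shade-tog.diff}). You have spelled out precisely the same involution $\mu$, the same $\varepsilon$, the same use of Lemma \ref{lem.sets.srvinv} and Lemma \ref{lem.toggle}, and the same complementary-sum decomposition.
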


\begin{proof}
Again, the proof is analogous to our above proof of Theorem
\ref{thm.elser-Shade1}. (This time, in the proof of Lemma \ref{lem.Shade-tog},
the equalities (\ref{eq.lem.Shade-tog.union}) and (\ref{eq.lem.Shade-tog.diff}%
) follow directly from Axiom 1 and Axiom 2, respectively.)
\end{proof}

How do shade maps relate to known concepts in the combinatorics of set
families (such as topologies, clutters, matroids, or submodular functions)?
Are they just one of these known concepts in disguise? We shall answer two
versions of this question in the following subsections. Specifically:

\begin{itemize}
\item In Subsection \ref{subsect.abstract.amatr}, we will show that
inclusion-reversing shade maps on $E$ (i.e., shade maps $\operatorname*{Shade}%
$ that satisfy $\operatorname*{Shade}B\subseteq\operatorname*{Shade}A$
whenever $A\subseteq B$) are in bijection with \emph{antimatroidal
quasi-closure operators} (a slight variant of antimatroids) on $E$.

\item In Subsection \ref{subsect.abstract.bip}, we will show that arbitrary
shade maps are in bijection with \emph{Boolean interval partitions} of
$\mathcal{P}\left(  E\right)  $ (that is, set partitions of $\mathcal{P}%
\left(  E\right)  $ into intervals of the Boolean lattice $\mathcal{P}\left(
E\right)  $).
\end{itemize}

Before we come to these characterizations, we shall however make a few
elementary remarks on shade maps.

First, we observe that Axioms 1 and 2 in Definition \ref{def.Shade.shademap}
can be weakened to the following statements:

\begin{statement}
\textit{Axiom 1':} If $F\in\mathcal{P}\left(  E\right)  $ and $u\in
E\setminus\operatorname*{Shade}F$, then $\operatorname*{Shade}\left(
F\cup\left\{  u\right\}  \right)  \subseteq\operatorname*{Shade}F$.
\end{statement}

\begin{statement}
\textit{Axiom 2':} If $F\in\mathcal{P}\left(  E\right)  $ and $u\in
E\setminus\operatorname*{Shade}F$, then $\operatorname*{Shade}\left(
F\setminus\left\{  u\right\}  \right)  \subseteq\operatorname*{Shade}F$.
\end{statement}

Axiom 1' is weaker than Axiom 1, and likewise Axiom 2' is weaker than Axiom 2.
However, Axioms 1' and 2' combined are equivalent to Axioms 1 and 2 combined:

\begin{proposition}
\label{prop.Shade.shade-2}Let $E$ be a set. Let $\operatorname*{Shade}%
:\mathcal{P}\left(  E\right)  \rightarrow\mathcal{P}\left(  E\right)  $ be any
map. Then, $\operatorname*{Shade}$ is a shade map on $E$ if and only if
$\operatorname*{Shade}$ satisfies the two Axioms 1' and 2' stated above.
\end{proposition}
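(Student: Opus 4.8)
The plan is to prove Proposition \ref{prop.Shade.shade-2} by establishing two implications. The forward direction is trivial: Axiom 1 literally says $\operatorname*{Shade}\left(F\cup\left\{u\right\}\right)=\operatorname*{Shade}F$, which certainly implies $\operatorname*{Shade}\left(F\cup\left\{u\right\}\right)\subseteq\operatorname*{Shade}F$ (i.e.\ Axiom 1'), and similarly Axiom 2 implies Axiom 2'. So the entire content lies in the converse: assuming Axioms 1' and 2', recover the two equalities of Axioms 1 and 2.

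For the converse, fix $F\in\mathcal{P}\left(E\right)$ and $u\in E\setminus\operatorname*{Shade}F$. I want to show $\operatorname*{Shade}\left(F\cup\left\{u\right\}\right)=\operatorname*{Shade}F$ and $\operatorname*{Shade}\left(F\setminus\left\{u\right\}\right)=\operatorname*{Shade}F$. Axiom 1' and Axiom 2' already give the ``$\subseteq$'' inclusions in both cases, so I only need the reverse inclusions $\operatorname*{Shade}F\subseteq\operatorname*{Shade}\left(F\cup\left\{u\right\}\right)$ and $\operatorname*{Shade}F\subseteq\operatorname*{Shade}\left(F\setminus\left\{u\right\}\right)$. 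The trick is to apply Axioms 1' and 2' in the other direction, using the hypothesis in a reversed form. Concretely, set $F':=F\cup\left\{u\right\}$. From $\operatorname*{Shade}F'\subseteq\operatorname*{Shade}F$ (Axiom 1') and the assumption $u\notin\operatorname*{Shade}F$, we get $u\notin\operatorname*{Shade}F'$. Now apply Axiom 2' to $F'$ and $u$: since $u\in E\setminus\operatorname*{Shade}F'$, we obtain $\operatorname*{Shade}\left(F'\setminus\left\{u\right\}\right)\subseteq\operatorname*{Shade}F'$. But $F'\setminus\left\{u\right\}=\left(F\cup\left\{u\right\}\right)\setminus\left\{u\right\}$; since $u\notin F$ (because $u\notin\operatorname*{Shade}F$, and $F\subseteq\operatorname*{Shade}F$ would be needed here — careful, we do not have that in general, so instead note directly that $F'\setminus\left\{u\right\}\subseteq F$, hence by monotonicity... wait, monotonicity of $\operatorname*{Shade}$ is not assumed either). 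Let me restructure: the cleanest route avoids any set-arithmetic subtlety.

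So here is the clean argument for the reverse inclusions. For the first: apply Axiom 2' to the set $F\cup\left\{u\right\}$ and the element $u$. This is legitimate because $u\notin\operatorname*{Shade}\left(F\cup\left\{u\right\}\right)$ — indeed Axiom 1' gives $\operatorname*{Shade}\left(F\cup\left\{u\right\}\right)\subseteq\operatorname*{Shade}F$, and $u\notin\operatorname*{Shade}F$. Axiom 2' then yields $\operatorname*{Shade}\left(\left(F\cup\left\{u\right\}\right)\setminus\left\{u\right\}\right)\subseteq\operatorname*{Shade}\left(F\cup\left\{u\right\}\right)$. If $u\notin F$, the left side is $\operatorname*{Shade}F$ and we are done; if $u\in F$, then $F\cup\left\{u\right\}=F$ and the desired equality $\operatorname*{Shade}\left(F\cup\left\{u\right\}\right)=\operatorname*{Shade}F$ is trivial. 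Symmetrically, for the second equality, apply Axiom 1' to $F\setminus\left\{u\right\}$ and $u$: this is legitimate because Axiom 2' gives $\operatorname*{Shade}\left(F\setminus\left\{u\right\}\right)\subseteq\operatorname*{Shade}F$, so $u\notin\operatorname*{Shade}\left(F\setminus\left\{u\right\}\right)$; then Axiom 1' yields $\operatorname*{Shade}\left(\left(F\setminus\left\{u\right\}\right)\cup\left\{u\right\}\right)\subseteq\operatorname*{Shade}\left(F\setminus\left\{u\right\}\right)$, and if $u\in F$ the left side is $\operatorname*{Shade}F$ (if $u\notin F$ the equality is trivial). Combining with the forward inclusions from Axioms 1' and 2' gives the two equalities, hence Axioms 1 and 2.

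The only mild subtlety — and hence the ``main obstacle'' — is the bookkeeping around whether $u\in F$ or $u\notin F$: one must resist the temptation to invoke monotonicity of $\operatorname*{Shade}$ or the relation $F\subseteq\operatorname*{Shade}F$, neither of which is part of the hypotheses of this proposition (Lemma \ref{lem.Shade-monoton} is specific to the graph-theoretic $\operatorname*{Shade}$). The case split on $u\in F$ handles this cleanly and keeps the argument purely formal. Once that is navigated, the proof is a short symmetric application of Axiom 1' and Axiom 2' to the modified sets $F\cup\left\{u\right\}$ and $F\setminus\left\{u\right\}$.
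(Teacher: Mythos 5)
Your proof is correct. The paper omits the proof of Proposition~\ref{prop.Shade.shade-2} (Section~\ref{sec.abstract} states that most proofs there are left to the reader), so there is no paper proof to compare against, but your argument is precisely the natural one: Axioms~1' and~2' each supply one inclusion directly, and the reverse inclusions are obtained by applying the other primed axiom to the modified set ($F\cup\{u\}$ or $F\setminus\{u\}$), noting that $u$ still lies outside the shade of the modified set by the first application. The scratch-work detour in the middle of your write-up is a genuine near-miss (trying to deduce $u\notin F$ from $u\notin\operatorname*{Shade}F$, which is unjustified since neither $F\subseteq\operatorname*{Shade}F$ nor monotonicity is among the hypotheses), and you correctly identify and repair it with the case split on $u\in F$ versus $u\notin F$; in the degenerate case of each equality the claim is trivially true, and in the non-degenerate case the cross-application of the other primed axiom closes the gap. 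I would suggest excising the abandoned attempt from the final write-up and presenting only the ``clean argument'' paragraph.
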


Axioms 1 and 2 can also be combined into one common axiom:

\begin{statement}
\textit{Axiom 3:} If $F\in\mathcal{P}\left(  E\right)  $ and $u\in E\setminus
F$, then we have $\operatorname*{Shade}F=\operatorname*{Shade}\left(
F\cup\left\{  u\right\}  \right)  $ or $u\in\left(  \operatorname*{Shade}%
F\right)  \cap\operatorname*{Shade}\left(  F\cup\left\{  u\right\}  \right)  $.
\end{statement}

\begin{proposition}
\label{prop.Shade.shade-3}Let $E$ be a set. Let $\operatorname*{Shade}%
:\mathcal{P}\left(  E\right)  \rightarrow\mathcal{P}\left(  E\right)  $ be any
map. Then, $\operatorname*{Shade}$ is a shade map on $E$ if and only if
$\operatorname*{Shade}$ satisfies Axiom 3.
\end{proposition}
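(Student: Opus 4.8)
The plan is to prove Proposition \ref{prop.Shade.shade-3} by showing the two implications between "$\operatorname*{Shade}$ is a shade map" (i.e., satisfies Axioms 1 and 2) and "$\operatorname*{Shade}$ satisfies Axiom 3". The natural bridge is to reformulate both conditions as statements about a single pair $(F, F\cup\{u\})$ where $u\in E\setminus F$, since Axioms 1 and 2 quantify over $(F,u)$ with $u\notin\operatorname*{Shade}F$, while Axiom 3 quantifies over all $(F,u)$ with $u\notin F$.

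First I would observe the following rephrasing of Axioms 1 and 2 together. Fix $F\in\mathcal{P}(E)$ and $u\in E\setminus F$, and set $F'=F\cup\{u\}$, so that $F=F'\setminus\{u\}$. Axiom 1 applied to the pair $(F,u)$ says: if $u\notin\operatorname*{Shade}F$, then $\operatorname*{Shade}F'=\operatorname*{Shade}F$. Axiom 2 applied to the pair $(F',u)$ says: if $u\notin\operatorname*{Shade}F'$, then $\operatorname*{Shade}F=\operatorname*{Shade}F'$. Combining these two, Axioms 1 and 2 together are equivalent to the single statement: for every $F\in\mathcal{P}(E)$ and $u\in E\setminus F$, \emph{if $u\notin\operatorname*{Shade}F$ or $u\notin\operatorname*{Shade}(F\cup\{u\})$, then $\operatorname*{Shade}F=\operatorname*{Shade}(F\cup\{u\})$}. (Here I am using that ranging $F$ over all subsets not containing $u$, and separately over all subsets containing $u$, together exhausts all relevant pairs; this is where a small amount of care is needed to make sure nothing is double-counted or missed.) Taking the contrapositive of the inner implication, this is exactly: for every $F$ and $u\in E\setminus F$, \emph{$\operatorname*{Shade}F=\operatorname*{Shade}(F\cup\{u\})$, or else $u\in\operatorname*{Shade}F$ and $u\in\operatorname*{Shade}(F\cup\{u\})$} --- which is verbatim Axiom 3.

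So the proof reduces to carefully verifying that rephrasing. For the forward direction (shade map $\Rightarrow$ Axiom 3): given $F$ and $u\in E\setminus F$, if $\operatorname*{Shade}F=\operatorname*{Shade}(F\cup\{u\})$ we are done; otherwise the two shades differ, so by the contrapositives of Axiom 1 (for $(F,u)$) and Axiom 2 (for $(F\cup\{u\},u)$) we must have $u\in\operatorname*{Shade}F$ and $u\in\operatorname*{Shade}(F\cup\{u\})$, giving Axiom 3. For the reverse direction (Axiom 3 $\Rightarrow$ shade map): to check Axiom 1, take $F$ and $u\in E\setminus\operatorname*{Shade}F$; in particular $u\notin\operatorname*{Shade}F$. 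If also $u\notin F$, apply Axiom 3 to $(F,u)$: the second alternative ($u\in\operatorname*{Shade}F$) is impossible, so $\operatorname*{Shade}F=\operatorname*{Shade}(F\cup\{u\})$, which is the desired conclusion. If instead $u\in F$, then $F\cup\{u\}=F$ and the conclusion is trivial. Axiom 2 is checked symmetrically: given $F$ and $u\in E\setminus\operatorname*{Shade}F$, set $F''=F\setminus\{u\}$; if $u\in F$, apply Axiom 3 to the pair $(F'',u)$ (noting $u\in E\setminus F''$ and $F''\cup\{u\}=F$) --- the alternative $u\in\operatorname*{Shade}(F''\cup\{u\})=\operatorname*{Shade}F$ is ruled out by hypothesis, so $\operatorname*{Shade}(F\setminus\{u\})=\operatorname*{Shade}F$; if $u\notin F$ the conclusion is again trivial.

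I do not expect a serious obstacle here; the content is entirely bookkeeping about which pair a given axiom is being applied to. The one place to be attentive is the case split on whether $u\in F$ when verifying Axioms 1 and 2 from Axiom 3 --- Axiom 3 is stated only for $u\in E\setminus F$, so one must handle $u\in F$ separately (where the conclusions are trivial because $F\cup\{u\}=F$ or $F\setminus\{u\}$ differs from $F$ but then one applies Axiom 3 to the smaller set). Everything else is a direct logical manipulation of the implications and their contrapositives.
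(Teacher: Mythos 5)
Your proof is correct, and since the paper omits the proof of this proposition (referring instead to the detailed arXiv version), your argument fills the gap with what is surely the intended routine verification: applying Axioms 1 and 2 (or their contrapositives) to the pairs $(F,u)$ and $(F\cup\{u\},u)$ respectively to obtain Axiom 3, and conversely deriving Axioms 1 and 2 from Axiom 3 after the easy case split on whether $u\in F$. The bookkeeping is handled carefully and nothing is missing.
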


We will soon see some examples. First, let us introduce two more basic
concepts that will help clarify these examples:

\begin{definition}
\label{def.inclusion-pres}Let $E$ be a set. Let $\operatorname*{Shade}%
:\mathcal{P}\left(  E\right)  \rightarrow\mathcal{P}\left(  E\right)  $ be any
map (not necessarily a shade map).

\textbf{(a)} We say that $\operatorname*{Shade}$ is
\emph{inclusion-preserving} if it satisfies the following property: If $A$ and
$B$ are two subsets of $E$ such that $A\subseteq B$, then
$\operatorname*{Shade}A\subseteq\operatorname*{Shade}B$.

\textbf{(b)} We say that $\operatorname*{Shade}$ is \emph{inclusion-reversing}
if it satisfies the following property: If $A$ and $B$ are two subsets of $E$
such that $A\subseteq B$, then $\operatorname*{Shade}B\subseteq
\operatorname*{Shade}A$.
\end{definition}

For instance, the map $\operatorname*{Shade}$ from Definition \ref{def.Shade}
is inclusion-preserving (because of Lemma \ref{lem.Shade-monoton}) and is a
shade map (by Lemma \ref{lem.Shade-tog}). The same holds for the analogue of
the map $\operatorname*{Shade}$ that uses vertex-infection instead of
infection. We will soon see some inclusion-reversing shade maps, and it is not
hard to construct shade maps that are neither inclusion-preserving nor inclusion-reversing.

Let us observe that there is a simple bijection between inclusion-preserving
and inclusion-reversing maps, and this bijection preserves shadeness:

\begin{proposition}
\label{prop.Shade.dual}Let $E$ be a set. Let $\operatorname*{Shade}%
:\mathcal{P}\left(  E\right)  \rightarrow\mathcal{P}\left(  E\right)  $ be any
map (not necessarily a shade map). Let $\operatorname*{Shade}\nolimits^{\prime
}:\mathcal{P}\left(  E\right)  \rightarrow\mathcal{P}\left(  E\right)  $ be
the map that sends each $F\in\mathcal{P}\left(  E\right)  $ to
$\operatorname*{Shade}\left(  E\setminus F\right)  \in\mathcal{P}\left(
E\right)  $. Then:

\textbf{(a)} The map $\operatorname*{Shade}$ is inclusion-preserving if and
only if the map $\operatorname*{Shade}\nolimits^{\prime}$ is inclusion-reversing.

\textbf{(b)} The map $\operatorname*{Shade}$ is a shade map if and only if the
map $\operatorname*{Shade}\nolimits^{\prime}$ is a shade map.
\end{proposition}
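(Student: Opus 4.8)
The plan is to prove both parts by a direct, essentially formal computation, exploiting the fact that complementation $F \mapsto E \setminus F$ is an inclusion-reversing involution on $\mathcal{P}(E)$. For part \textbf{(a)}, I would argue both implications at once: suppose $\operatorname{Shade}$ is inclusion-preserving, and let $A \subseteq B$ be subsets of $E$; then $E \setminus B \subseteq E \setminus A$, so applying the inclusion-preserving property of $\operatorname{Shade}$ to the pair $E\setminus B \subseteq E\setminus A$ gives $\operatorname{Shade}(E\setminus B) \subseteq \operatorname{Shade}(E\setminus A)$, i.e. $\operatorname{Shade}'(B) \subseteq \operatorname{Shade}'(A)$, which is exactly the inclusion-reversing property for $\operatorname{Shade}'$. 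The converse is the same computation read backwards, using that $E \setminus (E \setminus F) = F$, so that $\operatorname{Shade}'' = \operatorname{Shade}$; alternatively one observes that the operation $\operatorname{Shade} \mapsto \operatorname{Shade}'$ is an involution on maps $\mathcal{P}(E) \to \mathcal{P}(E)$ and swaps ``inclusion-preserving'' with ``inclusion-reversing'', which settles (a) in one stroke.

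For part \textbf{(b)}, I would use the Axiom 1'/Axiom 2' characterization from Proposition \ref{prop.Shade.shade-2}: $\operatorname{Shade}$ is a shade map iff it satisfies Axiom 1' and Axiom 2'. The key observation is that, under the substitution $F \mapsto E \setminus F$, adding an element to $F$ corresponds to removing it from $E \setminus F$ and vice versa, so Axiom 1' for $\operatorname{Shade}'$ should translate into Axiom 2' for $\operatorname{Shade}$, and Axiom 2' for $\operatorname{Shade}'$ into Axiom 1' for $\operatorname{Shade}$. Concretely: fix $F \in \mathcal{P}(E)$ and $u \in E \setminus \operatorname{Shade}'(F) = E \setminus \operatorname{Shade}(E\setminus F)$. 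Write $F' = E \setminus F$; then $u \in E \setminus \operatorname{Shade}(F')$, and I must relate $\operatorname{Shade}'(F \cup \{u\})$ and $\operatorname{Shade}'(F \setminus \{u\})$ to $\operatorname{Shade}'(F)$. Since $E \setminus (F \cup \{u\}) = F' \setminus \{u\}$ and $E \setminus (F \setminus \{u\}) = F' \cup \{u\}$ (the latter requires $u \notin F$, the former requires $u \in F$ — but these are exactly the cases that matter, and in the off-case the identity is trivial since $F \cup \{u\} = F$ or $F \setminus \{u\} = F$), Axiom 1' for $\operatorname{Shade}'$ becomes the assertion $\operatorname{Shade}(F' \setminus \{u\}) \subseteq \operatorname{Shade}(F')$, which is Axiom 2' for $\operatorname{Shade}$; and Axiom 2' for $\operatorname{Shade}'$ becomes $\operatorname{Shade}(F' \cup \{u\}) \subseteq \operatorname{Shade}(F')$, which is Axiom 1' for $\operatorname{Shade}$. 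As $F$ ranges over $\mathcal{P}(E)$, so does $F' = E \setminus F$, so these translations are bijective, and we conclude that $\operatorname{Shade}'$ satisfies Axioms 1' and 2' iff $\operatorname{Shade}$ does, i.e. $\operatorname{Shade}'$ is a shade map iff $\operatorname{Shade}$ is.

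The only mild subtlety — and the one place I would be careful in writing the details — is the bookkeeping in part (b) around whether $u \in F$ or $u \notin F$ when passing between $F \cup \{u\}$, $F \setminus \{u\}$ and their complements; the cleanest route is to note that when $u \in F$ we have $F \cup \{u\} = F$ (so the relevant axiom instance is vacuous) and when $u \notin F$ we have $F \setminus \{u\} = F$ (likewise vacuous), so in each axiom only one of the two cases carries content, and in that case the complementation identity $E \setminus (F \cup \{u\}) = (E \setminus F) \setminus \{u\}$ (resp. $E \setminus (F \setminus \{u\}) = (E \setminus F) \cup \{u\}$) holds on the nose. No genuine obstacle is expected here; this is a routine ``transport of structure along an involution'' argument, and I would keep the write-up short, perhaps even leaving the verification to the reader in the spirit of the surrounding section.
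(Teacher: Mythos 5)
Your argument is correct and is exactly the transport-along-complementation proof the paper has in mind (the paper omits the proof of this proposition, deeming it routine). Two small clean-ups are worth noting. First, the set identities
\[
E \setminus \left(  F\cup\left\{  u\right\}  \right)  =\left(  E\setminus F\right)  \setminus\left\{  u\right\}
\qquad\text{and}\qquad
E\setminus\left(  F\setminus\left\{  u\right\}  \right)  =\left(  E\setminus F\right)  \cup\left\{  u\right\}
\]
hold unconditionally for all $u\in E$ and all $F\subseteq E$; the parenthetical claim that ``the latter requires $u\notin F$, the former requires $u\in F$'' is not correct, and the case analysis you append to patch it is unnecessary (your later self-correction does arrive at a sound version, but the cleanest route is to just observe that no hypothesis on $u$ versus $F$ is needed at all). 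Second, you can bypass Proposition~\ref{prop.Shade.shade-2} entirely: the same complementation computation shows that Axiom~1 for $\operatorname*{Shade}\nolimits^{\prime}$ at $\left(  F,u\right)  $ is literally Axiom~2 for $\operatorname*{Shade}$ at $\left(  E\setminus F,u\right)  $, and Axiom~2 for $\operatorname*{Shade}\nolimits^{\prime}$ at $\left(  F,u\right)  $ is Axiom~1 for $\operatorname*{Shade}$ at $\left(  E\setminus F,u\right)  $, so part~\textbf{(b)} follows directly from Definition~\ref{def.Shade.shademap} without passing through the primed axioms.
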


\begin{definition}
Let $E$, $\operatorname*{Shade}$ and $\operatorname*{Shade}\nolimits^{\prime}$
be as in Proposition \ref{prop.Shade.dual}. We then say that the map
$\operatorname*{Shade}\nolimits^{\prime}$ is \emph{dual} to
$\operatorname*{Shade}$.
\end{definition}

\subsection{Some examples of shade maps}

As we already mentioned, Lemma \ref{lem.Shade-tog} and its analogue for
vertex-infection provide two examples of inclusion-preserving shade maps
$\operatorname*{Shade}$. An example of an inclusion-reversing shade map comes
from the theory of posets:

\begin{example}
\label{exa.shade.low}Let $E$ be a poset. For any $F\subseteq E$, we define%
\[
F_{\downarrow}=\left\{  e\in E\ \mid\ \text{there exists an }f\in F\text{ with
}e<f\right\}
\]
and%
\[
\operatorname*{Shade}F=E\setminus F_{\downarrow}.
\]
Then, this map $\operatorname*{Shade}:\mathcal{P}\left(  E\right)
\rightarrow\mathcal{P}\left(  E\right)  $ is an inclusion-reversing shade map.
\end{example}

Another example of a shade map comes from discrete geometry:

\begin{example}
\label{exa.shade.convex}Let $A$ be an affine space over $\mathbb{R}$. If $S$
is a finite subset of $A$, then a \emph{nontrivial convex combination} of $S$
will mean a point of the form $\sum_{s\in S}\lambda_{s}s\in A$, where the
coefficients $\lambda_{s}$ are nonnegative reals smaller than $1$ and
satisfying $\sum_{s\in S}\lambda_{s}=1$.

Fix a finite subset $E$ of $A$. For any $F\subseteq E$, we define%
\[
\operatorname*{Shade}F=\left\{  e\in E\ \mid\ e\text{ is \textbf{not} a
nontrivial convex combination of }F\right\}  .
\]
Then, this map $\operatorname*{Shade}:\mathcal{P}\left(  E\right)
\rightarrow\mathcal{P}\left(  E\right)  $ is an inclusion-reversing shade map.
\end{example}

As a contrast to Example \ref{exa.shade.convex}, let us mention a
not-quite-example (satisfying only one of the two axioms in Theorem
\ref{thm.elser.genE0}):

\begin{example}
\label{exa.shade.cone}Let $V$ be a vector space over $\mathbb{R}$. If $S$ is a
finite subset of $V$, then a \emph{nontrivial conic combination} of $S$ will
mean a vector of the form $\sum_{s\in S}\lambda_{s}s\in V$, where the
coefficients $\lambda_{s}$ are nonnegative reals with the property that at
least two elements $s\in S$ satisfy $\lambda_{s}>0$.

Fix a finite subset $E$ of $V$. For any $F\subseteq E$, we define%
\[
\operatorname*{Shade}F=\left\{  e\in E\ \mid\ e\text{ is \textbf{not} a
nontrivial conic combination of }F\right\}  .
\]
It can be shown that this map $\operatorname*{Shade}:\mathcal{P}\left(
E\right)  \rightarrow\mathcal{P}\left(  E\right)  $ satisfies Axiom 1 in
Definition \ref{def.Shade.shademap}. In general, it does not satisfy Axiom 2.
Thus, it is not a shade map in general.
\end{example}

\subsection{\label{subsect.abstract.amatr}Antimatroids and inclusion-reversing
shade maps}

Examples \ref{exa.shade.low} and \ref{exa.shade.convex} are instances of a
general class of examples: shade maps coming from \emph{antimatroids}. Not
unlike matroids, antimatroids are a combinatorial concept with many equivalent
avatars (see, e.g., \cite[Chapter III]{KoLoSc91}). Here we shall view them
through one of these avatars: that of \emph{antimatroidal quasi-closure
operators} (roughly equivalent to \emph{convex geometries}). We begin by
defining the notions we need:

\begin{definition}
\label{def.antimat}Let $E$ be any set.

\textbf{(a)} A \emph{quasi-closure operator} on $E$ means a map $\tau
:\mathcal{P}\left(  E\right)  \rightarrow\mathcal{P}\left(  E\right)  $ with
the following properties:

\begin{enumerate}
\item We have $A\subseteq\tau\left(  A\right)  $ for any $A\subseteq E$.

\item If $A$ and $B$ are two subsets of $E$ satisfying $A\subseteq B$, then
$\tau\left(  A\right)  \subseteq\tau\left(  B\right)  $.

\item We have $\tau\left(  \tau\left(  A\right)  \right)  =\tau\left(
A\right)  $ for any $A\subseteq E$.
\end{enumerate}

\textbf{(b)} A quasi-closure operator $\tau$ on $E$ is said to be
\emph{antimatroidal} if it has the following additional property:

\begin{enumerate}
\item[4.] If $X$ is a subset of $E$, and if $y$ and $z$ are two distinct
elements of $E\setminus\tau\left(  X\right)  $ satisfying $z\in\tau\left(
X\cup\left\{  y\right\}  \right)  $, then $y\notin\tau\left(  X\cup\left\{
z\right\}  \right)  $.
\end{enumerate}

\textbf{(c)} A \emph{closure operator} on $E$ means a quasi-closure operator
$\tau$ on $E$ that satisfies $\tau\left(  \varnothing\right)  =\varnothing$.

\textbf{(d)} If $\tau$ is an antimatroidal closure operator on $E$, then the
pair $\left(  E,\tau\right)  $ is called a \emph{convex geometry}.
\end{definition}

Here are some examples of antimatroidal quasi-closure operators:

\begin{example}
\label{exa.antimat.low}Let $E$ be a poset. For any $F\subseteq E$, we define%
\[
\tau\left(  F\right)  =\left\{  e\in E\ \mid\ \text{there exists an }f\in
F\text{ with }e\leq f\right\}  .
\]
Then, $\tau$ is an antimatroidal closure operator on $E$. (This example is the
``downset alignment'' from \cite[\S 3,
Example II]{EdeJam85}, and is equivalent to the ``poset
antimatroid'' from \cite[\S III.2.3]{KoLoSc91}.)
\end{example}

\begin{example}
\label{exa.antimat.interval}Let $E$ be a poset. For any $F\subseteq E$, we
define%
\[
\tau\left(  F\right)  =\left\{  e\in E\ \mid\ \text{there exist }f\in F\text{
and }g\in F\text{ with }g\leq e\leq f\right\}  .
\]
Then, $\tau$ is an antimatroidal closure operator on $E$. (This example is the
``order convex alignment'' from \cite[\S 3,
Example II]{EdeJam85}, and is the ``double shelling of a
poset'' example from \cite[\S III.2.4]{KoLoSc91}.)
\end{example}

\begin{example}
\label{exa.antimat.convex}Let $A$ be an affine space over $\mathbb{R}$. If $S$
is a finite subset of $A$, then a \emph{convex combination} of $S$ will mean a
point of the form $\sum_{s\in S}\lambda_{s}s\in A$, where the coefficients
$\lambda_{s}$ are nonnegative reals satisfying $\sum_{s\in S}\lambda_{s}=1$.

Fix a finite subset $E$ of $A$. For any $F\subseteq E$, we define%
\[
\tau\left(  F\right)  =\left\{  e\in E\ \mid\ e\text{ is a convex combination
of }F\right\}  .
\]
Then, $\tau$ is an antimatroidal closure operator on $E$. (This example is
\cite[\S 3, Example I]{EdeJam85}; it gave the name ``convex
geometry'' to the notion defined in Definition
\ref{def.antimat} \textbf{(d)}.)
\end{example}

\begin{example}
\label{exa.antimat.linesearch}Let $\Gamma$ be any graph with edge set $E$. Fix
a vertex $v$ of $\Gamma$. We say that a subset $F\subseteq E$ \emph{blocks} an
edge $e\in E$ if each path of $\Gamma$ that contains $v$ and $e$ must contain
at least one edge of $F$. (In particular, this is automatically the case when
$e\in F$.) For each $F\subseteq E$, we define%
\[
\tau\left(  F\right)  =\left\{  e\in E\ \mid\ F\text{ blocks }e\right\}  .
\]
Then, $\tau$ is an antimatroidal quasi-closure operator on $E$. (This example
is the ``line-search antimatroid'' from
\cite[\S III.2.11]{KoLoSc91}.) If $\Gamma$ is connected, then $\tau$ is
actually a closure operator.
\end{example}

Further examples of antimatroidal closure operators can be found in
\cite[\S III.2]{KoLoSc91} and \cite[\S 3]{EdeJam85}.

We shall be dealing with quasi-closure operators rather than closure operators
most of the time. However, since the latter concept is somewhat more
widespread, let us comment on the connection between the two. Roughly
speaking, the relation between quasi-closure and closure operators is
comparable to the relation between semigroups and monoids, or between
nonunital rings and unital rings, or (perhaps the best analogue) between
simplicial complexes in general and simplicial complexes without ghost
vertices (i.e., simplicial complexes for which every element of the ground set
is a dimension-$0$ face). More concretely, specifying a quasi-closure operator
on a set $E$ is tantamount to specifying a subset of $E$ and a closure
operator on this subset. To wit:

\begin{proposition}
\label{prop.antimat.cl-vs-qcl}Let $E$ be a set. Let $L$ be a subset of $E$.

\textbf{(a)} Then, there is a bijection%
\begin{align*}
&  \text{from }\left\{  \text{quasi-closure operators }\tau\text{ on }E\text{
satisfying }\tau\left(  \varnothing\right)  =L\right\} \\
&  \text{to }\left\{  \text{closure operators }\sigma\text{ on }E\setminus
L\right\}
\end{align*}
that is defined as follows: It sends each quasi-closure operator $\tau$ to the
closure operator $\sigma$ that sends each $F\subseteq E\setminus L$ to
$\tau\left(  F\right)  \setminus L$.

\textbf{(b)} This bijection restricts to a bijection%
\begin{align*}
&  \text{from }\left\{  \text{antimatroidal quasi-closure operators }%
\tau\text{ on }E\text{ satisfying }\tau\left(  \varnothing\right)  =L\right\}
\\
&  \text{to }\left\{  \text{antimatroidal closure operators }\sigma\text{ on
}E\setminus L\right\}  .
\end{align*}

\end{proposition}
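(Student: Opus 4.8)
The plan is to write down the inverse map explicitly and then reduce essentially everything to a single auxiliary observation about how a quasi-closure operator with $\tau(\varnothing)=L$ interacts with the set $L$. For part (a), I would send a closure operator $\sigma$ on $E\setminus L$ to the map $\tau_\sigma:\mathcal{P}(E)\to\mathcal{P}(E)$ defined by $\tau_\sigma(F)=\sigma(F\setminus L)\cup L$, and claim this is a two-sided inverse of the map $\tau\mapsto\sigma_\tau$, where $\sigma_\tau(F)=\tau(F)\setminus L$ for $F\subseteq E\setminus L$. The auxiliary observation I would prove first is: if $\tau$ is any quasi-closure operator on $E$ with $\tau(\varnothing)=L$, then for every $A\subseteq E$ one has $L\subseteq\tau(A)$ and $\tau(A\cup L)=\tau(A)=\tau(A\setminus L)$. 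The inclusion $L\subseteq\tau(A)$ is just monotonicity applied to $\varnothing\subseteq A$; the two equalities then follow by sandwiching $\tau(A)$ between $\tau(A\cup L)$ (resp. $\tau((A\setminus L)\cup L)$) and $\tau(\tau(A))$ and invoking idempotence.

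Granting this, the verification that $\sigma_\tau$ is a closure operator on $E\setminus L$ is short: $F\subseteq\sigma_\tau(F)$ because $F\subseteq\tau(F)$ and $F\cap L=\varnothing$; monotonicity is inherited directly; $\sigma_\tau(\varnothing)=L\setminus L=\varnothing$; and idempotence reads $\tau(\tau(F)\setminus L)\setminus L=\tau(F)\setminus L$, which holds since $\tau(\tau(F)\setminus L)=\tau(\tau(F))=\tau(F)$ by the auxiliary observation. Dually, $\tau_\sigma$ is checked to be a quasi-closure operator with $\tau_\sigma(\varnothing)=\sigma(\varnothing)\cup L=L$ using only elementary set algebra together with idempotence and monotonicity of $\sigma$ (e.g.\ $\tau_\sigma(F)\setminus L=\sigma(F\setminus L)$ because $\sigma(F\setminus L)\subseteq E\setminus L$, whence $\tau_\sigma(\tau_\sigma(F))=\sigma(\sigma(F\setminus L))\cup L=\tau_\sigma(F)$). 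Mutual inverseness is then immediate: $\tau_{\sigma_\tau}(F)=(\tau(F\setminus L)\setminus L)\cup L=\tau(F\setminus L)\cup L=\tau(F)$ by the auxiliary observation (using $L\subseteq\tau(F)$), and $\sigma_{\tau_\sigma}(F)=(\sigma(F)\cup L)\setminus L=\sigma(F)$ for $F\subseteq E\setminus L$ since $\sigma(F)\subseteq E\setminus L$.

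For part (b) I would show that Axiom 4 transports across this bijection almost verbatim. If $\tau$ is antimatroidal, take $X\subseteq E\setminus L$ and distinct $y,z\in(E\setminus L)\setminus\sigma_\tau(X)$ with $z\in\sigma_\tau(X\cup\{y\})$; since $y,z\notin L$, from $\sigma_\tau(X)=\tau(X)\setminus L$ we get $y,z\notin\tau(X)$ and $z\in\tau(X\cup\{y\})$, so Axiom 4 for $\tau$ (applied to the same $X,y,z$) gives $y\notin\tau(X\cup\{z\})$, hence $y\notin\sigma_\tau(X\cup\{z\})$. Conversely, if $\sigma$ is antimatroidal, take $X\subseteq E$ and distinct $y,z\in E\setminus\tau_\sigma(X)$ with $z\in\tau_\sigma(X\cup\{y\})$; because $L=\tau_\sigma(\varnothing)\subseteq\tau_\sigma(X)$ we again get $y,z\notin L$, and rewriting $\tau_\sigma(X)=\sigma(X\setminus L)\cup L$ and $\tau_\sigma(X\cup\{y\})=\sigma((X\setminus L)\cup\{y\})\cup L$ shows $y,z\notin\sigma(X\setminus L)$ and $z\in\sigma((X\setminus L)\cup\{y\})$. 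Axiom 4 for $\sigma$ (with $X$ replaced by $X\setminus L$) then yields $y\notin\sigma((X\setminus L)\cup\{z\})$, and since $y\notin L$ this gives $y\notin\tau_\sigma(X\cup\{z\})$.

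The only place requiring genuine care — what I would call the main obstacle — is the auxiliary observation and its two uses (idempotence of $\sigma_\tau$, and the identity $\tau_{\sigma_\tau}=\tau$): both rest on the fact that $L=\tau(\varnothing)$ is swallowed by every closure, so that adding or deleting elements of $L$ from a set leaves its closure unchanged. Everything else in the proof is routine bookkeeping with unions, set differences, and the three (resp.\ four) axioms, and I would leave those computations to the reader as the excerpt already announces.
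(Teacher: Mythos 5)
Your proposal is correct and follows what is surely the intended route (the paper omits this proof, describing the omitted arguments as "sufficiently simple and straightforward"). The key ingredients you identify — the explicit inverse $\tau_\sigma(F)=\sigma(F\setminus L)\cup L$ and the auxiliary observation that $L\subseteq\tau(A)$ with $\tau(A\cup L)=\tau(A)=\tau(A\setminus L)$ for every $A$ — are exactly right, and your verifications of the three (resp.\ four) axioms on each side and of mutual inverseness are sound. The transport of Axiom 4 in part (b) is also handled correctly: in the forward direction $y,z\notin L$ is given, and in the backward direction it follows from $L\subseteq\tau_\sigma(X)$, so the hypotheses of Axiom~4 pass cleanly between $\tau$ over $E$ and $\sigma$ over $E\setminus L$.
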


Now, we claim the following:

\begin{theorem}
\label{thm.antimat.Shade}Let $E$ be a set. Let $\tau:\mathcal{P}\left(
E\right)  \rightarrow\mathcal{P}\left(  E\right)  $ be an antimatroidal
quasi-closure operator on $E$. For any $F\subseteq E$, we define%
\begin{equation}
\operatorname*{Shade}F=\left\{  e\in E\ \mid\ e\notin\tau\left(
F\setminus\left\{  e\right\}  \right)  \right\}  .
\label{eq.thm.antimat.Shade.ShadeF=}%
\end{equation}
Then, this map $\operatorname*{Shade}:\mathcal{P}\left(  E\right)
\rightarrow\mathcal{P}\left(  E\right)  $ is an inclusion-reversing shade map.
\end{theorem}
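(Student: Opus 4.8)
The plan is to verify the two assertions in turn. Inclusion-reversingness is immediate: if $A\subseteq B$ and $e\in\operatorname*{Shade}B$ (so that $e\notin\tau\left(B\setminus\{e\}\right)$), then $A\setminus\{e\}\subseteq B\setminus\{e\}$, so monotonicity of $\tau$ (property~2 in Definition~\ref{def.antimat}) gives $\tau\left(A\setminus\{e\}\right)\subseteq\tau\left(B\setminus\{e\}\right)$, whence $e\notin\tau\left(A\setminus\{e\}\right)$, i.e.\ $e\in\operatorname*{Shade}A$. This step uses nothing but monotonicity.

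For the shade-map claim I would check Axiom~3 and invoke Proposition~\ref{prop.Shade.shade-3}. The one auxiliary fact to record first is the standard closure-theoretic identity: $u\in\tau\left(S\right)$ forces $\tau\left(S\cup\{u\}\right)=\tau\left(S\right)$ (indeed $S\cup\{u\}\subseteq\tau\left(S\right)$ by property~1, and then properties~2 and~3 give $\tau\left(S\cup\{u\}\right)\subseteq\tau\left(\tau\left(S\right)\right)=\tau\left(S\right)\subseteq\tau\left(S\cup\{u\}\right)$). Now fix $F\subseteq E$ and $u\in E\setminus F$. Since $u\notin F$, both $F\setminus\{u\}$ and $\left(F\cup\{u\}\right)\setminus\{u\}$ equal $F$, so the definition of $\operatorname*{Shade}$ gives the equivalences $u\in\operatorname*{Shade}F\iff u\notin\tau\left(F\right)\iff u\in\operatorname*{Shade}\left(F\cup\{u\}\right)$. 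If $u\notin\tau\left(F\right)$, then $u$ lies in $\left(\operatorname*{Shade}F\right)\cap\operatorname*{Shade}\left(F\cup\{u\}\right)$ and Axiom~3 holds for the pair $\left(F,u\right)$.

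It remains to treat the case $u\in\tau\left(F\right)$, in which I must show $\operatorname*{Shade}F=\operatorname*{Shade}\left(F\cup\{u\}\right)$. The inclusion ``$\subseteq$'' is a special case of inclusion-reversingness (just shown), so I take $e\in\operatorname*{Shade}F$ and prove $e\notin\tau\left(\left(F\cup\{u\}\right)\setminus\{e\}\right)$. First, $e\ne u$: otherwise $e\in\operatorname*{Shade}F$ would read $u\notin\tau\left(F\setminus\{u\}\right)=\tau\left(F\right)$, contradicting the case assumption. If $e\notin F$, then $\left(F\cup\{u\}\right)\setminus\{e\}=F\cup\{u\}$, so the auxiliary identity gives $\tau\left(\left(F\cup\{u\}\right)\setminus\{e\}\right)=\tau\left(F\right)$, while $e\notin\tau\left(F\right)$ by hypothesis (using $F\setminus\{e\}=F$). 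If $e\in F$, put $X=F\setminus\{e\}$; then $e\notin\tau\left(X\right)$ and $\left(F\cup\{u\}\right)\setminus\{e\}=X\cup\{u\}$. If $u\in\tau\left(X\right)$, the auxiliary identity gives $\tau\left(X\cup\{u\}\right)=\tau\left(X\right)$, and $e\notin\tau\left(X\right)$. If $u\notin\tau\left(X\right)$, then $e$ and $u$ are distinct elements of $E\setminus\tau\left(X\right)$ with $u\in\tau\left(F\right)=\tau\left(X\cup\{e\}\right)$, so the antimatroidal property~4 (applied with this $X$ and with $y=e$, $z=u$) yields $e\notin\tau\left(X\cup\{u\}\right)$, as required.

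I expect the main obstacle to be exactly this last subcase: it is the only place where property~4 — the axiom distinguishing antimatroidal quasi-closure operators from arbitrary ones — is used, and the delicate point is to line up the roles of $X$, $y$, $z$ so that the available membership $u\in\tau\left(X\cup\{e\}\right)$ feeds the hypothesis of property~4 while its conclusion $e\notin\tau\left(X\cup\{u\}\right)$ is precisely the target. Everything else reduces to routine bookkeeping with properties~1--3 of a quasi-closure operator.
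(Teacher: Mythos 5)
Your proof is correct, but it follows a genuinely different path from the paper's. The paper verifies Axioms 1 and 2 separately: it first proves Statement 0 (inclusion-reversingness, just as you do), then establishes Axiom 2 by a contradiction argument -- assuming $v\in\operatorname*{Shade}(F\setminus\{u\})\setminus\operatorname*{Shade}F$, setting $X=(F\setminus\{u\})\setminus\{v\}$, and invoking Lemma~\ref{lem.antimat.nicer} (a restatement of the antimatroidal Property~4) to force $v=u$ and derive a contradiction -- and finally deduces Axiom 1 from Axiom 2 via Statement 0. You instead verify the single combined Axiom~3 and appeal to Proposition~\ref{prop.Shade.shade-3}. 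Your route is a direct case analysis rather than a proof by contradiction, invokes Property~4 in its raw form rather than through Lemma~\ref{lem.antimat.nicer}, and arguably isolates more transparently the one place where the antimatroidal axiom does real work (the subcase $e\in F$, $u\notin\tau(X)$). The trade-off is that your argument rests on Proposition~\ref{prop.Shade.shade-3}, whose proof the paper omits; the paper's version is self-contained modulo the short Lemmas~\ref{lem.antimat.clos-add} and~\ref{lem.antimat.nicer}. One small slip in exposition: when you write that ``the inclusion `$\subseteq$' is a special case of inclusion-reversingness,'' you have the directions reversed -- inclusion-reversingness gives $\operatorname*{Shade}(F\cup\{u\})\subseteq\operatorname*{Shade}F$ from $F\subseteq F\cup\{u\}$, and the nontrivial inclusion you then proceed to prove is $\operatorname*{Shade}F\subseteq\operatorname*{Shade}(F\cup\{u\})$. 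This is purely a labeling error; the substance of the argument is sound.
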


Theorem \ref{thm.antimat.Shade} generalizes Examples \ref{exa.shade.low} and
\ref{exa.shade.convex}. Indeed, the latter two examples are obtained by
applying Theorem \ref{thm.antimat.Shade} to the settings of Examples
\ref{exa.antimat.low} and \ref{exa.antimat.convex}, respectively. Less
directly, Lemma \ref{lem.Shade-tog} and its vertex-infection analogue are
particular cases of Theorem \ref{thm.antimat.Shade} as well (even though they
involve shade maps that are inclusion-preserving rather than
inclusion-reversing). Indeed, if we apply Theorem \ref{thm.antimat.Shade} to
the setting of Example \ref{exa.antimat.linesearch}, then we obtain the claim
of Lemma \ref{lem.Shade-tog} with $\operatorname*{Shade}F$ replaced by
$\operatorname*{Shade}\left(  E\setminus F\right)  $; this is easily seen to
be equivalent to Lemma \ref{lem.Shade-tog} (by the duality stated in
Proposition \ref{prop.Shade.dual}).

Our proof of Theorem \ref{thm.antimat.Shade} will rely on two easy lemmas:

\begin{lemma}
\label{lem.antimat.clos-add}Let $E$ be a set. Let $\tau$ be a quasi-closure
operator on $E$. Let $X$ be a subset of $E$, and let $z\in\tau\left(
X\right)  $. Then, $\tau\left(  X\cup\left\{  z\right\}  \right)  =\tau\left(
X\right)  $.
\end{lemma}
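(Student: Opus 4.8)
The plan is to derive the two inclusions $\tau(X) \subseteq \tau(X \cup \{z\})$ and $\tau(X \cup \{z\}) \subseteq \tau(X)$ separately, using only the three axioms of a quasi-closure operator from Definition \ref{def.antimat} \textbf{(a)}; the antimatroidal property (axiom 4) plays no role here.

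First I would observe that $X \subseteq X \cup \{z\}$, so monotonicity (axiom 2) immediately gives $\tau(X) \subseteq \tau(X \cup \{z\})$. That settles the easy inclusion.

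For the reverse inclusion, the idea is to sandwich $X \cup \{z\}$ between $X$ and $\tau(X)$. Indeed, axiom 1 gives $X \subseteq \tau(X)$, and $z \in \tau(X)$ by hypothesis, so $X \cup \{z\} \subseteq \tau(X)$. Applying monotonicity (axiom 2) to this inclusion yields $\tau(X \cup \{z\}) \subseteq \tau(\tau(X))$, and idempotence (axiom 3) rewrites the right-hand side as $\tau(X)$. Combining the two inclusions gives $\tau(X \cup \{z\}) = \tau(X)$, as desired.

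I do not expect any real obstacle here: this is the standard fact that adjoining a point already in the closure does not enlarge the closure, and the proof uses each of the three quasi-closure axioms exactly once. The only thing to be mildly careful about is to invoke monotonicity in the correct direction in each of its two uses.
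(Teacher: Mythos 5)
Your proof is correct, and it is the standard argument the paper has in mind (the paper omits the proof of this lemma, noting only that the proofs in this section are simple and straightforward). The two-inclusion argument using monotonicity, extensivity, and idempotence exactly once each is precisely what is called for, and you are right that the antimatroidal axiom is not needed.
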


\begin{lemma}
\label{lem.antimat.nicer}Let $E$ be a set. Let $\tau$ be an antimatroidal
quasi-closure operator on $E$. Let $X$ be a subset of $E$, and let $y$ and $z$
be two distinct elements of $E$ satisfying $z\in\tau\left(  X\cup\left\{
y\right\}  \right)  $ and $y\in\tau\left(  X\cup\left\{  z\right\}  \right)
$. Then, $y\in\tau\left(  X\right)  $.
\end{lemma}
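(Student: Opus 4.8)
The plan is to prove Lemma~\ref{lem.antimat.nicer} by a short case distinction according to whether or not $z$ already lies in $\tau\left(X\right)$. In one case Axiom~4 of Definition~\ref{def.antimat} does the work; in the other case Lemma~\ref{lem.antimat.clos-add} does. No computation is needed.

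First I would handle the case $z\in\tau\left(X\right)$. Here Lemma~\ref{lem.antimat.clos-add} (applied to $X$ and $z$ as named) gives $\tau\left(X\cup\left\{z\right\}\right)=\tau\left(X\right)$. Since we are given $y\in\tau\left(X\cup\left\{z\right\}\right)$, this immediately yields $y\in\tau\left(X\right)$, which is the claim. (Note that in this case the hypothesis $z\in\tau\left(X\cup\left\{y\right\}\right)$ is not needed.)

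Next I would treat the case $z\notin\tau\left(X\right)$, arguing by contradiction: suppose $y\notin\tau\left(X\right)$. Then $y$ and $z$ are two distinct elements of $E\setminus\tau\left(X\right)$, and by hypothesis $z\in\tau\left(X\cup\left\{y\right\}\right)$. Axiom~4 of Definition~\ref{def.antimat} (with $X$, $y$, $z$ playing their eponymous roles) then forces $y\notin\tau\left(X\cup\left\{z\right\}\right)$, contradicting the hypothesis $y\in\tau\left(X\cup\left\{z\right\}\right)$. Hence $y\in\tau\left(X\right)$ in this case as well, completing the proof.

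The only real point to notice — and the reason a case split is needed at all — is that Axiom~4 is stated only for elements $y,z$ lying \emph{outside} $\tau\left(X\right)$, so it cannot be invoked directly from the hypotheses; the possibility $z\in\tau\left(X\right)$ has to be absorbed into $\tau\left(X\right)$ first, which is exactly what Lemma~\ref{lem.antimat.clos-add} permits. I therefore do not expect any genuine obstacle here; the argument is entirely routine once this case distinction is made.
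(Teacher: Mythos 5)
Your proof is correct. The paper omits the proof of this lemma (referring instead to the detailed version of the cited arXiv preprint), but your argument is the natural one: the case distinction on whether $z\in\tau\left(X\right)$ is exactly what is needed, since Axiom~4 applies only to elements outside $\tau\left(X\right)$, and Lemma~\ref{lem.antimat.clos-add} cleanly dispatches the remaining case. Both cases check out, and nothing is missing.
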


Note that Lemma \ref{lem.antimat.nicer} has a converse: If $\tau$ is a
quasi-closure operator on $E$ satisfying the claim of Lemma
\ref{lem.antimat.nicer}, then $\tau$ is antimatroidal. This is easy to see but
will not be used in what follows.

\begin{proof}
[of Theorem \ref{thm.antimat.Shade}]We shall prove the following three statements:

\begin{statement}
\textit{Statement 0:} If $A$ and $B$ are two subsets of $E$ such that
$A\subseteq B$, then $\operatorname*{Shade}B\subseteq\operatorname*{Shade}A$.
\end{statement}

\begin{statement}
\textit{Statement 1:} If $F\in\mathcal{P}\left(  E\right)  $ and $u\in
E\setminus\operatorname*{Shade}F$, then $\operatorname*{Shade}\left(
F\cup\left\{  u\right\}  \right)  =\operatorname*{Shade}F$.
\end{statement}

\begin{statement}
\textit{Statement 2:} If $F\in\mathcal{P}\left(  E\right)  $ and $u\in
E\setminus\operatorname*{Shade}F$, then $\operatorname*{Shade}\left(
F\setminus\left\{  u\right\}  \right)  =\operatorname*{Shade}F$.
\end{statement}

Our proofs of these three statements will use Property 2 in Definition
\ref{def.antimat} \textbf{(a)} (which we shall just refer to as
``Property 2'').

[\textit{Proof of Statement 0:} Let $A$ and $B$ be two subsets of $E$ such
that $A\subseteq B$. We must prove that $\operatorname*{Shade}B\subseteq
\operatorname*{Shade}A$.

Let $u\in\operatorname*{Shade}B$. Thus, $u\in E$ and $u\notin\tau\left(
B\setminus\left\{  u\right\}  \right)  $ (by the definition of
$\operatorname*{Shade}B$). However, $A\setminus\left\{  u\right\}  \subseteq
B\setminus\left\{  u\right\}  $ (since $A\subseteq B$) and thus $\tau\left(
A\setminus\left\{  u\right\}  \right)  \subseteq\tau\left(  B\setminus\left\{
u\right\}  \right)  $ (by Property 2). Hence, from $u\notin\tau\left(
B\setminus\left\{  u\right\}  \right)  $, we obtain $u\notin\tau\left(
A\setminus\left\{  u\right\}  \right)  $. Therefore, $u\in
\operatorname*{Shade}A$ (by the definition of $\operatorname*{Shade}A$).

Since we have shown this for each $u\in\operatorname*{Shade}B$, we thus obtain
$\operatorname*{Shade}B\subseteq\operatorname*{Shade}A$. This proves Statement 0.]

[\textit{Proof of Statement 2:} Let $F\in\mathcal{P}\left(  E\right)  $ and
$u\in E\setminus\operatorname*{Shade}F$. We must prove that
$\operatorname*{Shade}\left(  F\setminus\left\{  u\right\}  \right)
=\operatorname*{Shade}F$.

We have $u\in E\setminus\operatorname*{Shade}F$, so that $u\notin%
\operatorname*{Shade}F$. In other words, $u\in E$ and $u\in\tau\left(
F\setminus\left\{  u\right\}  \right)  $ (by the definition of
$\operatorname*{Shade}F$).

We have $F\setminus\left\{  u\right\}  \subseteq F$ and thus
$\operatorname*{Shade}F\subseteq\operatorname*{Shade}\left(  F\setminus
\left\{  u\right\}  \right)  $ (by Statement 0).

Now, let $v\in\operatorname*{Shade}\left(  F\setminus\left\{  u\right\}
\right)  $. We shall prove that $v\in\operatorname*{Shade}F$.

Indeed, assume the contrary. Hence, $v\notin\operatorname*{Shade}F$. In other
words, $v\in E$ and $v\in\tau\left(  F\setminus\left\{  v\right\}  \right)  $
(by the definition of $\operatorname*{Shade}F$).

Let $X=\left(  F\setminus\left\{  u\right\}  \right)  \setminus\left\{
v\right\}  $. Then, $F\setminus\left\{  u\right\}  \subseteq X\cup\left\{
v\right\}  $ and therefore $\tau\left(  F\setminus\left\{  u\right\}  \right)
\subseteq\tau\left(  X\cup\left\{  v\right\}  \right)  $ (by Property 2).
Hence, $u\in\tau\left(  F\setminus\left\{  u\right\}  \right)  \subseteq
\tau\left(  X\cup\left\{  v\right\}  \right)  $.

Also, from $X=\left(  F\setminus\left\{  u\right\}  \right)  \setminus\left\{
v\right\}  =\left(  F\setminus\left\{  v\right\}  \right)  \setminus\left\{
u\right\}  $, we obtain $F\setminus\left\{  v\right\}  \subseteq X\cup\left\{
u\right\}  $ and therefore $\tau\left(  F\setminus\left\{  v\right\}  \right)
\subseteq\tau\left(  X\cup\left\{  u\right\}  \right)  $ (by Property 2).
Hence, $v\in\tau\left(  F\setminus\left\{  v\right\}  \right)  \subseteq
\tau\left(  X\cup\left\{  u\right\}  \right)  $.

However, from $v\in\operatorname*{Shade}\left(  F\setminus\left\{  u\right\}
\right)  $, we obtain $v\notin\tau\left(  \left(  F\setminus\left\{
u\right\}  \right)  \setminus\left\{  v\right\}  \right)  $ (by the definition
of $\operatorname*{Shade}\left(  F\setminus\left\{  u\right\}  \right)  $). In
other words, $v\notin\tau\left(  X\right)  $ (since $X=\left(  F\setminus
\left\{  u\right\}  \right)  \setminus\left\{  v\right\}  $). However, if $v$
and $u$ were distinct, then Lemma \ref{lem.antimat.nicer} (applied to $y=v$
and $z=u$) would yield $v\in\tau\left(  X\right)  $ (since $v\in\tau\left(
X\cup\left\{  u\right\}  \right)  $ and $u\in\tau\left(  X\cup\left\{
v\right\}  \right)  $), which would contradict $v\notin\tau\left(  X\right)
$. Thus, $v$ and $u$ cannot be distinct. In other words, $v=u$. Now,
$X=\left(  F\setminus\left\{  u\right\}  \right)  \setminus\left\{  v\right\}
=F\setminus\left\{  u\right\}  $ (since $v=u$). Therefore, $u\in\tau\left(
F\setminus\left\{  u\right\}  \right)  $ rewrites as $u\in\tau\left(
X\right)  $. Hence, $v=u\in\tau\left(  X\right)  $; but this contradicts
$v\notin\tau\left(  X\right)  $.

This contradiction shows that our assumption was false; hence, we conclude
that $v\in\operatorname*{Shade}F$. Since we have proved this for each
$v\in\operatorname*{Shade}\left(  F\setminus\left\{  u\right\}  \right)  $, we
thus obtain $\operatorname*{Shade}\left(  F\setminus\left\{  u\right\}
\right)  \subseteq\operatorname*{Shade}F$. Combining this with
$\operatorname*{Shade}F\subseteq\operatorname*{Shade}\left(  F\setminus
\left\{  u\right\}  \right)  $, we obtain $\operatorname*{Shade}\left(
F\setminus\left\{  u\right\}  \right)  =\operatorname*{Shade}F$. This proves
Statement 2.]

[\textit{Proof of Statement 1:} Let $F\in\mathcal{P}\left(  E\right)  $ and
$u\in E\setminus\operatorname*{Shade}F$. We must prove that
$\operatorname*{Shade}\left(  F\cup\left\{  u\right\}  \right)
=\operatorname*{Shade}F$. If $u\in F$, then this is obvious (since
$F\cup\left\{  u\right\}  =F$ in this case). Thus, we WLOG assume that
$u\notin F$. Hence, $\left(  F\cup\left\{  u\right\}  \right)  \setminus
\left\{  u\right\}  =F$.

We have $F\subseteq F\cup\left\{  u\right\}  $ and thus $\operatorname*{Shade}%
\left(  F\cup\left\{  u\right\}  \right)  \subseteq\operatorname*{Shade}F$ (by
Statement 0).

Now, $u\in E\setminus\operatorname*{Shade}F\subseteq E\setminus
\operatorname*{Shade}\left(  F\cup\left\{  u\right\}  \right)  $ (since
$\operatorname*{Shade}\left(  F\cup\left\{  u\right\}  \right)  \subseteq
\operatorname*{Shade}F$). Hence, Statement 2 (applied to $F\cup\left\{
u\right\}  $ instead of $F$) yields $\operatorname*{Shade}%
F=\operatorname*{Shade}\left(  F\cup\left\{  u\right\}  \right)  $ (since
$\left(  F\cup\left\{  u\right\}  \right)  \setminus\left\{  u\right\}  =F$).
This proves Statement 1.]

Now, we have proved Statements 1 and 2. Thus, the map $\operatorname*{Shade}%
:\mathcal{P}\left(  E\right)  \rightarrow\mathcal{P}\left(  E\right)  $ is a
shade map. Moreover, this map is inclusion-reversing (by Statement 0). Thus,
Theorem \ref{thm.antimat.Shade} is proved.
\end{proof}

We note that the quasi-closure operator $\tau$ in Theorem
\ref{thm.antimat.Shade} can be reconstructed from the map
$\operatorname*{Shade}$. This does not even require $\tau$ to be
antimatroidal; the following holds for any quasi-closure operator:

\begin{proposition}
\label{prop.antimat.recover-tau}Let $E$ be a set. Let $\tau:\mathcal{P}\left(
E\right)  \rightarrow\mathcal{P}\left(  E\right)  $ be a quasi-closure
operator on $E$. For any $F\subseteq E$, we define%
\[
\operatorname*{Shade}F=\left\{  e\in E\ \mid\ e\notin\tau\left(
F\setminus\left\{  e\right\}  \right)  \right\}  .
\]
Then, each $F\subseteq E$ satisfies%
\begin{equation}
\tau\left(  F\right)  =F\cup\left(  E\setminus\operatorname*{Shade}F\right)  .
\label{eq.prop.antimat.recover-tau.tauF=}%
\end{equation}

\end{proposition}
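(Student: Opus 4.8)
The plan is to prove the set equality (\ref{eq.prop.antimat.recover-tau.tauF=}) by establishing the two inclusions separately. Fix $F \subseteq E$; write $S = \operatorname*{Shade} F = \left\{ e \in E \mid e \notin \tau\left( F \setminus \left\{ e \right\} \right) \right\}$, so that the claim becomes $\tau(F) = F \cup (E \setminus S)$.

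For the inclusion $F \cup (E \setminus S) \subseteq \tau(F)$: the part $F \subseteq \tau(F)$ is immediate from Property 1 in Definition \ref{def.antimat}(a). For the part $E \setminus S \subseteq \tau(F)$, take $e \in E \setminus S$; by the definition of $S$ this means $e \in \tau\left( F \setminus \left\{ e \right\} \right)$. Since $F \setminus \left\{ e \right\} \subseteq F$, Property 2 gives $\tau\left( F \setminus \left\{ e \right\} \right) \subseteq \tau(F)$, hence $e \in \tau(F)$.

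For the reverse inclusion $\tau(F) \subseteq F \cup (E \setminus S)$: take $e \in \tau(F)$ and suppose $e \notin F$; I must show $e \notin S$, i.e., $e \in \tau\left( F \setminus \left\{ e \right\} \right)$. But since $e \notin F$, we have $F \setminus \left\{ e \right\} = F$, so $e \in \tau(F) = \tau\left( F \setminus \left\{ e \right\} \right)$, which is exactly $e \notin S$. Thus $e \in E \setminus S$, and in either case ($e \in F$ or not) we get $e \in F \cup (E \setminus S)$.

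Combining the two inclusions yields (\ref{eq.prop.antimat.recover-tau.tauF=}). Note that the argument uses only Properties 1 and 2 of a quasi-closure operator (reflexivity and monotonicity); idempotence (Property 3) and the antimatroidal Property 4 are not needed, which matches the statement's assertion that it holds for any quasi-closure operator. There is no real obstacle here — the only subtlety worth flagging is the case distinction $e \in F$ versus $e \notin F$ in the reverse inclusion, which is what makes the $F \cup (\cdots)$ shape (rather than just $E \setminus S$) appear on the right-hand side.
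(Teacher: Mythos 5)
Your proof is correct, and since the paper omits the proof of this proposition (deferring to the detailed arXiv version), there is nothing to compare against; but the double-inclusion argument you give is precisely the natural and essentially forced one, using only reflexivity (Property 1) and monotonicity (Property 2) of a quasi-closure operator. Your closing remark about why the case split $e\in F$ versus $e\notin F$ is what forces the $F\cup(\cdots)$ shape on the right-hand side is a useful observation and correctly identifies the one small subtlety.
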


It turns out that if one applies the formula
(\ref{eq.prop.antimat.recover-tau.tauF=}) to an inclusion-reversing shade map
$\operatorname*{Shade}$, then the resulting map $\tau$ is an antimatroidal
quasi-closure operator, at least when $E$ is finite. In fact, we have the following:

\begin{proposition}
\label{prop.antimat.tau-from-Shade}Let $E$ be a finite set. Let
$\operatorname*{Shade}:\mathcal{P}\left(  E\right)  \rightarrow\mathcal{P}%
\left(  E\right)  $ be an inclusion-reversing shade map. Define a map
$\tau:\mathcal{P}\left(  E\right)  \rightarrow\mathcal{P}\left(  E\right)  $
by setting%
\begin{equation}
\tau\left(  F\right)  =F\cup\left(  E\setminus\operatorname*{Shade}F\right)
\label{eq.prop.antimat.tau-from-Shade.tauF=}%
\end{equation}
for each $F\subseteq E$. Then, $\tau$ is an antimatroidal quasi-closure
operator on $E$.
\end{proposition}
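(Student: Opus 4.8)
The plan is to verify directly that the map $\tau$ defined by (\ref{eq.prop.antimat.tau-from-Shade.tauF=}) satisfies the four properties appearing in Definition \ref{def.antimat} (Properties 1--3 make it a quasi-closure operator, and Property 4 makes it antimatroidal). Two of these are immediate. Property 1 ($F\subseteq\tau\left(F\right)$ for all $F\subseteq E$) holds by the very definition of $\tau$. For Property 2 (monotonicity), note that if $A\subseteq B$, then the inclusion-reversing property of $\operatorname*{Shade}$ gives $\operatorname*{Shade}B\subseteq\operatorname*{Shade}A$, hence $E\setminus\operatorname*{Shade}A\subseteq E\setminus\operatorname*{Shade}B$; combining this with $A\subseteq B$ yields $\tau\left(A\right)=A\cup\left(E\setminus\operatorname*{Shade}A\right)\subseteq B\cup\left(E\setminus\operatorname*{Shade}B\right)=\tau\left(B\right)$.

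The engine for the remaining two properties will be the auxiliary claim that $\operatorname*{Shade}\left(\tau\left(F\right)\right)=\operatorname*{Shade}F$ for every $F\subseteq E$. To prove this, I would write $\tau\left(F\right)\setminus F=\left\{u_{1},u_{2},\ldots,u_{k}\right\}$ (this is where finiteness of $E$ is used), observe that each $u_{i}$ lies in $E\setminus\operatorname*{Shade}F$ (since $\tau\left(F\right)\setminus F\subseteq E\setminus\operatorname*{Shade}F$ by definition of $\tau$), and then show by induction on $i$ that $\operatorname*{Shade}\left(F\cup\left\{u_{1},\ldots,u_{i}\right\}\right)=\operatorname*{Shade}F$: in the induction step, $u_{i+1}\notin\operatorname*{Shade}F=\operatorname*{Shade}\left(F\cup\left\{u_{1},\ldots,u_{i}\right\}\right)$, so Axiom 1 of Definition \ref{def.Shade.shademap} lets us adjoin $u_{i+1}$ without changing the shade. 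Taking $i=k$ (and using $F\subseteq\tau\left(F\right)$, so that $F\cup\left\{u_{1},\ldots,u_{k}\right\}=\tau\left(F\right)$) gives the claim. Property 3 (idempotence) then follows at once: $\tau\left(\tau\left(F\right)\right)=\tau\left(F\right)\cup\left(E\setminus\operatorname*{Shade}\left(\tau\left(F\right)\right)\right)=\tau\left(F\right)\cup\left(E\setminus\operatorname*{Shade}F\right)=\tau\left(F\right)$, the last step because $E\setminus\operatorname*{Shade}F\subseteq\tau\left(F\right)$.

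The main obstacle is Property 4, which I would prove by contradiction. Assume $X\subseteq E$ and $y,z$ are distinct elements of $E\setminus\tau\left(X\right)$ with $z\in\tau\left(X\cup\left\{y\right\}\right)$ but, contrary to what we want, also $y\in\tau\left(X\cup\left\{z\right\}\right)$. Unwinding the definition of $\tau$, the hypothesis $y,z\in E\setminus\tau\left(X\right)$ gives $y,z\notin X$ and $y,z\in\operatorname*{Shade}X$; and since $y\neq z$ and $y,z\notin X$, the conditions $z\in\tau\left(X\cup\left\{y\right\}\right)$ and $y\in\tau\left(X\cup\left\{z\right\}\right)$ force $z\notin\operatorname*{Shade}\left(X\cup\left\{y\right\}\right)$ and $y\notin\operatorname*{Shade}\left(X\cup\left\{z\right\}\right)$. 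Now apply Axiom 1 twice: once with $F=X\cup\left\{y\right\}$ and $u=z$, and once with $F=X\cup\left\{z\right\}$ and $u=y$; this shows that $\operatorname*{Shade}\left(X\cup\left\{y\right\}\right)$, $\operatorname*{Shade}\left(X\cup\left\{z\right\}\right)$ and $\operatorname*{Shade}\left(X\cup\left\{y,z\right\}\right)$ all equal one common set $S$, and $y\notin S$, $z\notin S$. Finally, since $y\notin S=\operatorname*{Shade}\left(X\cup\left\{y\right\}\right)$, Axiom 2 applied to $F=X\cup\left\{y\right\}$ and $u=y$ gives $\operatorname*{Shade}X=\operatorname*{Shade}\left(X\cup\left\{y\right\}\right)=S$ (using $y\notin X$, so that $\left(X\cup\left\{y\right\}\right)\setminus\left\{y\right\}=X$), contradicting $y\in\operatorname*{Shade}X$.

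I expect the delicate point to be Property 4: it is the only place that genuinely uses both axioms of a shade map in tandem (Axioms 1 and 2), and the bookkeeping with the three sets $X\cup\left\{y\right\}$, $X\cup\left\{z\right\}$, $X\cup\left\{y,z\right\}$ must be done carefully. By contrast, finiteness of $E$ enters only in the auxiliary claim $\operatorname*{Shade}\left(\tau\left(F\right)\right)=\operatorname*{Shade}F$, where we iterate Axiom 1 over the (necessarily finite) set $\tau\left(F\right)\setminus F$; this is consistent with the remark preceding the proposition that the finiteness hypothesis is what makes the statement go through.
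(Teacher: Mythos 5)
Your proof is correct. Properties 1 and 2 are immediate as you say, and your auxiliary claim $\operatorname*{Shade}\left(\tau\left(F\right)\right)=\operatorname*{Shade}F$ is precisely the specialization (with $A=F$, $B=\tau\left(F\right)\setminus F$) of the paper's Lemma \ref{lem.Shade.FuG}, which the paper explicitly names as the lemma on which the proof rests; your induction over $\tau\left(F\right)\setminus F$ is the natural way to prove that lemma. Property 3 then falls out exactly as you describe.

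Your argument for Property 4 (the antimatroid exchange axiom) is the part the paper does not display, and it is the part worth scrutinizing; I checked it and it holds up. From $y,z\in E\setminus\tau\left(X\right)$ you correctly extract $y,z\notin X$ and $y,z\in\operatorname*{Shade}X$; from $z\in\tau\left(X\cup\left\{y\right\}\right)$ together with $z\notin X\cup\left\{y\right\}$ you get $z\notin\operatorname*{Shade}\left(X\cup\left\{y\right\}\right)$, and symmetrically $y\notin\operatorname*{Shade}\left(X\cup\left\{z\right\}\right)$. The two applications of Axiom 1 legitimately identify $\operatorname*{Shade}\left(X\cup\left\{y\right\}\right)=\operatorname*{Shade}\left(X\cup\left\{z\right\}\right)=\operatorname*{Shade}\left(X\cup\left\{y,z\right\}\right)=S$ with $y,z\notin S$, and the single application of Axiom 2 (with $F=X\cup\left\{y\right\}$, $u=y$) then gives $\operatorname*{Shade}X=S$, contradicting $y\in\operatorname*{Shade}X$. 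Note that the inclusion-reversing hypothesis is used only in Property 2; Properties 1, 3, 4 need only the shade-map axioms (and, for Property 3, finiteness). One stylistic remark: rather than reproving the auxiliary claim ad hoc, it would be cleaner to invoke Lemma \ref{lem.Shade.FuG} directly, since it is already stated in the paper and is exactly what you need.
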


The proof of this proposition rests on the following lemma:

\begin{lemma}
\label{lem.Shade.FuG}Let $E$ be a set. Let $\operatorname*{Shade}%
:\mathcal{P}\left(  E\right)  \rightarrow\mathcal{P}\left(  E\right)  $ be a
shade map. Let $A$ and $B$ be two subsets of $E$ such that $B$ is finite and
$B\cap\operatorname*{Shade}A=\varnothing$. Then, $\operatorname*{Shade}\left(
A\cup B\right)  =\operatorname*{Shade}A$.
\end{lemma}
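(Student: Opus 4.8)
The statement is an iterated version of Axiom 1: adding a single element $u\notin\operatorname*{Shade}A$ to $A$ does not change the shade, so adding a whole finite set $B$ disjoint from $\operatorname*{Shade}A$ should not change it either. The plan is to induct on $|B|$. The base case $B=\varnothing$ is trivial, since $A\cup B=A$. For the induction step, assume $|B|\geq 1$ and pick any element $u\in B$; write $B=B'\cup\{u\}$ with $u\notin B'$ and $|B'|=|B|-1$.

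The key point is that I must verify the hypotheses of the lemma are preserved when I pass from $(A,B)$ to $(A\cup\{u\},B')$, so that the induction hypothesis applies. First, $u\in B$ and $B\cap\operatorname*{Shade}A=\varnothing$ give $u\notin\operatorname*{Shade}A$, i.e.\ $u\in E\setminus\operatorname*{Shade}A$. Hence Axiom 1 (in the form of (\ref{eq.lem.Shade-tog.union}), or directly Axiom 1 of Definition \ref{def.Shade.shademap}) yields $\operatorname*{Shade}(A\cup\{u\})=\operatorname*{Shade}A$. Therefore $B'\cap\operatorname*{Shade}(A\cup\{u\})=B'\cap\operatorname*{Shade}A\subseteq B\cap\operatorname*{Shade}A=\varnothing$, and $B'$ is finite. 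So the induction hypothesis applies to the pair $(A\cup\{u\},B')$ and gives $\operatorname*{Shade}\bigl((A\cup\{u\})\cup B'\bigr)=\operatorname*{Shade}(A\cup\{u\})$.

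Finally I chain the equalities: $(A\cup\{u\})\cup B' = A\cup(B'\cup\{u\})=A\cup B$, so
\[
\operatorname*{Shade}(A\cup B)=\operatorname*{Shade}\bigl((A\cup\{u\})\cup B'\bigr)=\operatorname*{Shade}(A\cup\{u\})=\operatorname*{Shade}A,
\]
completing the induction. The only mild subtlety—hardly an obstacle—is making sure that after absorbing $u$ into $A$ the disjointness hypothesis still holds against the \emph{new} shade $\operatorname*{Shade}(A\cup\{u\})$ rather than the old one; this is exactly where Axiom 1 is used, and since that axiom guarantees the shade is literally unchanged, the hypothesis transfers verbatim. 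Note that inclusion-reversingness is not needed here; the lemma holds for an arbitrary shade map, which is why it can be invoked in the proof of Proposition \ref{prop.antimat.tau-from-Shade}.
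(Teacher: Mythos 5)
Your proof is correct and matches the paper's intended approach: the paper omits the proof of this lemma but indicates (via the analogous Lemma \ref{lem.Shade.FoG}, proved by ``Induction on $|B|$, using Axiom 2'') that the expected argument is induction on $|B|$ using Axiom 1, which is precisely what you do. The induction step correctly absorbs one element $u\in B$ into $A$ and re-verifies the disjointness hypothesis against the (unchanged) shade before invoking the inductive hypothesis.
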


Proposition \ref{prop.antimat.recover-tau} has a (sort of) converse:

\begin{proposition}
\label{prop.antimat.recover-Shade}Let $E$ be a set. Let $\operatorname*{Shade}%
:\mathcal{P}\left(  E\right)  \rightarrow\mathcal{P}\left(  E\right)  $ be an
inclusion-reversing shade map. For any $F\subseteq E$, we define%
\[
\tau\left(  F\right)  =F\cup\left(  E\setminus\operatorname*{Shade}F\right)
.
\]
Then, each $F\subseteq E$ satisfies%
\begin{equation}
\operatorname*{Shade}F=\left\{  e\in E\ \mid\ e\notin\tau\left(
F\setminus\left\{  e\right\}  \right)  \right\}  .
\label{eq.prop.antimat.recover-Shade.ShadeF=}%
\end{equation}

\end{proposition}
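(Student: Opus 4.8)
I want to prove that for an inclusion-reversing shade map $\operatorname*{Shade}$, the associated map $\tau(F) = F \cup (E \setminus \operatorname*{Shade}F)$ satisfies
\[
\operatorname*{Shade}F = \left\{ e \in E \ \mid\ e \notin \tau\left( F \setminus \left\{ e \right\} \right) \right\}
\]
for every $F \subseteq E$. The strategy is to unfold the right-hand side using the definition of $\tau$ and then show the resulting condition on $e$ is equivalent to $e \in \operatorname*{Shade}F$. Concretely, for a fixed $e \in E$, write $A = F \setminus \{e\}$, so that $\tau(A) = A \cup (E \setminus \operatorname*{Shade}A)$. Then $e \notin \tau(A)$ holds if and only if $e \notin A$ (i.e. $e \notin F \setminus \{e\}$, which is automatic) \emph{and} $e \notin E \setminus \operatorname*{Shade}A$, i.e. $e \in \operatorname*{Shade}A = \operatorname*{Shade}(F \setminus \{e\})$. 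So the right-hand set is exactly $\{e \in E \mid e \in \operatorname*{Shade}(F \setminus \{e\})\}$, and the task reduces to showing that, for each $e \in E$,
\[
e \in \operatorname*{Shade}F \quad \Longleftrightarrow \quad e \in \operatorname*{Shade}\left( F \setminus \left\{ e \right\} \right).
\]

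For the forward direction, suppose $e \in \operatorname*{Shade}F$. I want $e \in \operatorname*{Shade}(F \setminus \{e\})$. If $e \notin F$, then $F \setminus \{e\} = F$ and there is nothing to prove. If $e \in F$, I argue by contradiction: assume $e \notin \operatorname*{Shade}(F \setminus \{e\})$. Then, with $u = e$ and the set $F \setminus \{e\}$ in the role of $F$, Axiom 1 of the shade map gives $\operatorname*{Shade}\big((F \setminus \{e\}) \cup \{e\}\big) = \operatorname*{Shade}(F \setminus \{e\})$, i.e. $\operatorname*{Shade}F = \operatorname*{Shade}(F \setminus \{e\})$, so $e \in \operatorname*{Shade}F$ would force $e \in \operatorname*{Shade}(F \setminus \{e\})$, a contradiction. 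For the reverse direction, suppose $e \in \operatorname*{Shade}(F \setminus \{e\})$. Since $\operatorname*{Shade}$ is inclusion-reversing and $F \setminus \{e\} \subseteq F$, we get $\operatorname*{Shade}F \subseteq \operatorname*{Shade}(F \setminus \{e\})$ — but that is the wrong direction. Instead: if $e \notin F$ then $F \setminus \{e\} = F$ and we are done immediately; if $e \in F$, I again use the shade-map axioms. From $e \in \operatorname*{Shade}(F \setminus \{e\})$ I cannot directly conclude anything, so let me reconsider: assume for contradiction that $e \notin \operatorname*{Shade}F$; then $e \in E \setminus \operatorname*{Shade}F$, and Axiom 2 (with $u = e$ in the role of $F$... no — Axiom 2 requires $u \in E \setminus \operatorname*{Shade}F$ and removes $u$ from $F$) gives $\operatorname*{Shade}(F \setminus \{e\}) = \operatorname*{Shade}F$, whence $e \in \operatorname*{Shade}(F \setminus \{e\})$ implies $e \in \operatorname*{Shade}F$, contradiction. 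So both directions close via Axioms 1 and 2 applied at the single element $e$.

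The main obstacle, as the scratch above shows, is not any deep content but bookkeeping: one must carefully split on whether $e \in F$ or $e \notin F$, and apply the correct axiom (Axiom 1 for the "$e \in \operatorname*{Shade}F \Rightarrow e \in \operatorname*{Shade}(F \setminus \{e\})$" step, Axiom 2 for the converse), always taking $u = e$ and running the axiom on the set $F \setminus \{e\}$ (for Axiom 1) or $F$ (for Axiom 2). It is worth noting that the inclusion-reversing hypothesis is not actually needed for this proposition — the argument only uses that $\operatorname*{Shade}$ is a shade map — which mirrors the situation of Proposition~\ref{prop.antimat.recover-tau}; I would either drop it or remark on it. Once the equivalence $e \in \operatorname*{Shade}F \Leftrightarrow e \in \operatorname*{Shade}(F \setminus \{e\})$ is established for all $e$, substituting back into the unfolded right-hand side completes the proof, since $\{e \in E \mid e \in \operatorname*{Shade}(F\setminus\{e\})\} = \{e \in E \mid e \in \operatorname*{Shade}F\} = \operatorname*{Shade}F$.
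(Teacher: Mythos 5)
Your argument is correct, and it is the natural one: unfold $\tau(F\setminus\{e\})$, observe that $e\notin F\setminus\{e\}$ is automatic so the right-hand side reduces to $\{e\in E \mid e\in\operatorname{Shade}(F\setminus\{e\})\}$, and then prove the pointwise equivalence $e\in\operatorname{Shade}F \Leftrightarrow e\in\operatorname{Shade}(F\setminus\{e\})$ using Axiom~1 (applied to the pair $(F\setminus\{e\},\,e)$, with the case split on $e\in F$ needed so that $(F\setminus\{e\})\cup\{e\}=F$) for one direction and Axiom~2 (applied to $(F,e)$, no case split required) for the other. The paper leaves this proposition's proof to the reader, so there is no paper argument to compare against; yours fills the gap correctly.

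Your parenthetical observation that the inclusion-reversing hypothesis is never used is also accurate --- the equivalence is proved entirely from Axioms~1 and~2. So Proposition~\ref{prop.antimat.recover-Shade}, like Proposition~\ref{prop.antimat.recover-tau}, holds for an arbitrary shade map. (The hypothesis is presumably retained in the statement because the proposition is used as the ``inverse'' half of the cryptomorphism in Theorem~\ref{thm.antimat.Shade.inc-rev-class}, where $\operatorname{Shade}$ is inclusion-reversing anyway; but strictly it is superfluous here.) One small presentational remark: the exploratory dead-end in the reverse direction (``$\operatorname{Shade}F\subseteq\operatorname{Shade}(F\setminus\{e\})$ --- but that is the wrong direction'') and the false start with the case split there should be pruned before this is submitted as a final proof, since the Axiom~2 argument you land on handles both cases uniformly.
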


Combining many of the results in this section, we obtain the following
description of inclusion-reversing shade maps:

\begin{theorem}
\label{thm.antimat.Shade.inc-rev-class}Let $E$ be a finite set. Then, there is
a bijection
\begin{align*}
&  \text{from the set }\left\{  \text{inclusion-reversing shade maps
}\operatorname*{Shade}:\mathcal{P}\left(  E\right)  \rightarrow\mathcal{P}%
\left(  E\right)  \right\} \\
&  \text{to the set }\left\{  \text{antimatroidal quasi-closure operators
}\tau:\mathcal{P}\left(  E\right)  \rightarrow\mathcal{P}\left(  E\right)
\right\}  .
\end{align*}
It sends each map $\operatorname*{Shade}$ to the map $\tau$ defined by
(\ref{eq.prop.antimat.tau-from-Shade.tauF=}). Its inverse map sends each map
$\tau$ to the map $\operatorname*{Shade}$ defined by
(\ref{eq.thm.antimat.Shade.ShadeF=}).
\end{theorem}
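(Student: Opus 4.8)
The plan is to assemble Theorem~\ref{thm.antimat.Shade.inc-rev-class} almost entirely out of the results already proved in this section, treating it as a bookkeeping exercise that verifies the two maps in question are mutually inverse. Let $\Phi$ denote the assignment sending an inclusion-reversing shade map $\operatorname*{Shade}$ to the map $\tau$ defined by (\ref{eq.prop.antimat.tau-from-Shade.tauF=}), and let $\Psi$ denote the assignment sending an antimatroidal quasi-closure operator $\tau$ to the map $\operatorname*{Shade}$ defined by (\ref{eq.thm.antimat.Shade.ShadeF=}). First I would check that $\Phi$ and $\Psi$ are well-defined maps between the two indicated sets: that $\Phi$ lands in antimatroidal quasi-closure operators is exactly Proposition~\ref{prop.antimat.tau-from-Shade} (here the finiteness of $E$ is used), and that $\Psi$ lands in inclusion-reversing shade maps is exactly Theorem~\ref{thm.antimat.Shade}.

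Next I would verify the two composition identities. The identity $\Phi \circ \Psi = \operatorname{id}$ says: starting from an antimatroidal quasi-closure operator $\tau$, forming $\operatorname*{Shade}$ via (\ref{eq.thm.antimat.Shade.ShadeF=}), and then forming $\tau'$ via (\ref{eq.prop.antimat.tau-from-Shade.tauF=}), we recover $\tau' = \tau$. This is precisely the content of Proposition~\ref{prop.antimat.recover-tau} (whose formula (\ref{eq.prop.antimat.recover-tau.tauF=}) is identical to (\ref{eq.prop.antimat.tau-from-Shade.tauF=})), and notably that proposition does not even require $\tau$ to be antimatroidal. The reverse identity $\Psi \circ \Phi = \operatorname{id}$ says: starting from an inclusion-reversing shade map $\operatorname*{Shade}$, forming $\tau$ via (\ref{eq.prop.antimat.tau-from-Shade.tauF=}), and then forming $\operatorname*{Shade}'$ via (\ref{eq.thm.antimat.Shade.ShadeF=}), we recover $\operatorname*{Shade}' = \operatorname*{Shade}$; this is exactly Proposition~\ref{prop.antimat.recover-Shade}, whose formula (\ref{eq.prop.antimat.recover-Shade.ShadeF=}) matches (\ref{eq.thm.antimat.Shade.ShadeF=}). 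Putting these four citations together — $\Phi$ well-defined, $\Psi$ well-defined, $\Phi\circ\Psi=\operatorname{id}$, $\Psi\circ\Phi=\operatorname{id}$ — immediately yields that $\Phi$ is a bijection with inverse $\Psi$, which is the claim.

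Since this is purely a matter of invoking already-established propositions, there is no real obstacle in the argument of the theorem itself; the only thing to be careful about is matching up which result supplies which half of the bijection and confirming that the defining formulas quoted in the various propositions are verbatim the ones referenced in the theorem statement (they are). The genuine work has been pushed into the supporting results, and among those the one I would expect to be the crux is Proposition~\ref{prop.antimat.tau-from-Shade} (that $\Phi$ really produces an \emph{antimatroidal} quasi-closure operator), which is why the paper isolates Lemma~\ref{lem.Shade.FuG} to feed its proof; but within the scope of Theorem~\ref{thm.antimat.Shade.inc-rev-class} we are entitled to take that proposition as given. Accordingly the write-up will be just a few lines: name $\Phi$ and $\Psi$, cite Proposition~\ref{prop.antimat.tau-from-Shade} and Theorem~\ref{thm.antimat.Shade} for well-definedness, cite Proposition~\ref{prop.antimat.recover-tau} and Proposition~\ref{prop.antimat.recover-Shade} for the two round-trips, and conclude.
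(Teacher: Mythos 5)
Your proposal is correct and is precisely the assembly the paper intends: the paper explicitly prefaces this theorem with ``Combining many of the results in this section,'' and your four citations (Proposition~\ref{prop.antimat.tau-from-Shade} and Theorem~\ref{thm.antimat.Shade} for well-definedness, Propositions~\ref{prop.antimat.recover-tau} and \ref{prop.antimat.recover-Shade} for the two round-trips) are exactly the intended ingredients. Nothing further is needed.
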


Theorem \ref{thm.antimat.Shade.inc-rev-class} classifies
\textbf{inclusion-reversing} shade maps in terms of antimatroidal
quasi-closure operators\footnote{In the parlance of matroid theorists, it
shows that inclusion-reversing shade maps are \emph{cryptomorphic} to
antimatroidal quasi-closure operators.}. The latter can in turn be described
in terms of antimatroidal closure operators (by Proposition
\ref{prop.antimat.cl-vs-qcl}), i.e., in terms of antimatroids. Thus,
inclusion-reversing shade maps ``boil down''
to antimatroids. The same can be said of \textbf{inclusion-preserving} shade
maps (because Proposition \ref{prop.Shade.dual} establishes a bijection
between them and the inclusion-reversing ones). In the next subsection, we
shall classify \textbf{arbitrary} shade maps in terms of what we will call
\emph{Boolean interval partitions}.

\subsection{\label{subsect.abstract.bip}Boolean interval partitions and
arbitrary shade maps}

Let us first define Boolean interval partitions:

\begin{definition}
\label{def.bip.bip}Let $E$ be a set.

\textbf{(a)} If $U$ and $V$ are two subsets of $E$ satisfying $U\subseteq V$,
then $\left[  U,V\right]  $ shall denote the subset%
\[
\left\{  I\in\mathcal{P}\left(  E\right)  \ \mid\ U\subseteq I\subseteq
V\right\}
\]
of $\mathcal{P}\left(  E\right)  $. This is the set of all subsets of $E$ that
lie between $U$ and $V$ (meaning that they contain $U$ as a subset, but in
turn are contained in $V$ as subsets).

\textbf{(b)} A \emph{Boolean interval} of $\mathcal{P}\left(  E\right)  $
shall mean a subset of $\mathcal{P}\left(  E\right)  $ that has the form
$\left[  U,V\right]  $ for two subsets $U$ and $V$ of $E$ satisfying
$U\subseteq V$. Note that each Boolean interval $\left[  U,V\right]  $ of
$\mathcal{P}\left(  E\right)  $ is nonempty (as it contains $U$ and $V$), and
the two subsets $U$ and $V$ can easily be reconstructed from it (namely, $U$
is the intersection of all $I\in\left[  U,V\right]  $, whereas $V$ is the
union of all $I\in\left[  U,V\right]  $).

\textbf{(c)} A \emph{Boolean interval partition} of $\mathcal{P}\left(
E\right)  $ means a set of pairwise disjoint Boolean intervals of
$\mathcal{P}\left(  E\right)  $ whose union is $\mathcal{P}\left(  E\right)  $.

\textbf{(d)} If $\mathbf{P}$ is a Boolean interval partition of $\mathcal{P}%
\left(  E\right)  $, then the elements of $\mathbf{P}$ (that is, the Boolean
intervals that belong to $\mathbf{P}$) are called the \emph{blocks} of
$\mathbf{P}$.
\end{definition}

\begin{example}
\label{exa.bip.bip.123}For this example, let $E=\left\{  1,2,3\right\}  $. We
shall use the shorthand $i_{1}i_{2}\cdots i_{k}$ for a subset $\left\{
i_{1},i_{2},\ldots,i_{k}\right\}  $ of $E$. (For example, $13$ means the
subset $\left\{  1,3\right\}  $.)

\textbf{(a)} We have
\[
\left[  1,\ 123\right]  =\left\{  I\in\mathcal{P}\left(  123\right)
\ \mid\ 1\subseteq I\subseteq123\right\}  =\left\{  1,\ 12,\ 13,\ 123\right\}
\]
and%
\[
\left[  1,\ 13\right]  =\left\{  I\in\mathcal{P}\left(  123\right)
\ \mid\ 1\subseteq I\subseteq13\right\}  =\left\{  1,\ 13\right\}  .
\]

\textbf{(b)} There are $3^{3}=27$ Boolean intervals of $\mathcal{P}\left(
E\right)  $. (More generally, if $E$ is an $n$-element set, then there are
$3^{n}$ Boolean intervals of $\mathcal{P}\left(  E\right)  $.)

\textbf{(c)} Here is one of many Boolean interval partitions of $\mathcal{P}%
\left(  E\right)  $ (where $E$ is still $\left\{  1,2,3\right\}  $):%
\[
\left\{  \underbrace{\left\{  \varnothing\right\}  }_{=\left[  \varnothing
,\ \varnothing\right]  },\ \ \underbrace{\left\{  1,\ 13\right\}  }_{=\left[
1,\ 13\right]  },\ \ \underbrace{\left\{  3\right\}  }_{=\left[  3,\ 3\right]
},\ \ \underbrace{\left\{  2,\ 12,\ 23,\ 123\right\}  }_{=\left[
2,\ 123\right]  }\right\}  .
\]
Here is another:%
\[
\left\{  \underbrace{\left\{  \varnothing,\ 1\right\}  }_{=\left[
\varnothing,\ 1\right]  },\ \ \underbrace{\left\{  3,\ 13\right\}  }_{=\left[
3,\ 13\right]  },\ \ \underbrace{\left\{  2,\ 23\right\}  }_{=\left[
2,\ 23\right]  },\ \ \underbrace{\left\{  12\right\}  }_{=\left[
12,\ 12\right]  },\ \ \underbrace{\left\{  123\right\}  }_{=\left[
123,\ 123\right]  }\right\}  .
\]
The former has four blocks; the latter has five.
\end{example}

Here are two ways to think of Boolean interval partitions of $\mathcal{P}%
\left(  E\right)  $:

\begin{itemize}
\item The following is just a slick restatement of Definition
\ref{def.bip.bip} \textbf{(c)} using standard combinatorial lingo: A Boolean
interval partition of $\mathcal{P}\left(  E\right)  $ is a set partition of
the Boolean lattice $\mathcal{P}\left(  E\right)  $ into intervals.

\item It is well-known that the set partitions of a given set are in a
canonical bijection with the equivalence relations on this set. In light of
this, the Boolean interval partitions of $\mathcal{P}\left(  E\right)  $ can
be viewed as the equivalence relations on $\mathcal{P}\left(  E\right)  $
whose equivalence classes are Boolean intervals. In other words, they can be
viewed as the equivalence relations $\sim$ on $\mathcal{P}\left(  E\right)  $
satisfying the axiom ``if $U,V,I\in\mathcal{P}\left(
E\right)  $ satisfy $U\sim V$ and $U\cap V\subseteq I\subseteq U\cup V$, then
$U\sim I\sim V$''. The reader can prove this alternative
characterization as an easy exercise in Boolean algebra.
\end{itemize}

Boolean interval partitions have come up in combinatorics before (e.g.,
\cite{BrOlNo09}, \cite{Dawson80}, \cite{DedTit20}, \cite{GorMah97}).

We shall now construct a shade map from any Boolean interval partition:

\begin{theorem}
\label{thm.bip.Bip-to-Shade}Let $E$ be a set. Let $\mathbf{P}$ be a Boolean
interval partition of $\mathcal{P}\left(  E\right)  $.

For any $F\in\mathcal{P}\left(  E\right)  $, let $\left[  \alpha\left(
F\right)  ,\ \tau\left(  F\right)  \right]  $ denote the (unique) block of
$\mathbf{P}$ that contains $F$.

We define a map $\operatorname*{Shade}:\mathcal{P}\left(  E\right)
\rightarrow\mathcal{P}\left(  E\right)  $ by setting%
\[
\operatorname*{Shade}F=E\setminus\left(  \tau\left(  F\right)  \setminus
\alpha\left(  F\right)  \right)  \ \ \ \ \ \ \ \ \ \ \text{for any }%
F\in\mathcal{P}\left(  E\right)  .
\]

Then:

\textbf{(a)} The map $\operatorname*{Shade}$ is a shade map on $E$.

\textbf{(b)} We have $\alpha\left(  F\right)  =F\cap\operatorname*{Shade}F$
and $\tau\left(  F\right)  =F\cup\left(  E\setminus\operatorname*{Shade}%
F\right)  $ for any $F\in\mathcal{P}\left(  E\right)  $.

\textbf{(c)} We have $\mathbf{P}=\left\{  \left[  \alpha\left(  F\right)
,\ \tau\left(  F\right)  \right]  \ \mid\ F\in\mathcal{P}\left(  E\right)
\right\}  $.
\end{theorem}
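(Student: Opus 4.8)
The plan is to extract everything from the single observation that, since $F$ lies in the block $\left[\alpha\left(F\right),\tau\left(F\right)\right]$, we have $\alpha\left(F\right)\subseteq F\subseteq\tau\left(F\right)$ for every $F\in\mathcal{P}\left(E\right)$ --- together with the fact that $\alpha\left(F\right)$ and $\tau\left(F\right)$ are well-defined, since $\mathbf{P}$ being a partition forces a unique block through $F$, and by Definition \ref{def.bip.bip} \textbf{(b)} the two endpoints of a Boolean interval are determined by the interval itself. I would prove part \textbf{(b)} first, as it is a pure set-algebra computation: from $\operatorname*{Shade}F=E\setminus\left(\tau\left(F\right)\setminus\alpha\left(F\right)\right)$ we get $F\cap\operatorname*{Shade}F=F\setminus\left(\tau\left(F\right)\setminus\alpha\left(F\right)\right)$, and since every element of $F$ already lies in $\tau\left(F\right)$ this collapses to $F\cap\alpha\left(F\right)=\alpha\left(F\right)$; dually, $F\cup\left(E\setminus\operatorname*{Shade}F\right)=F\cup\left(\tau\left(F\right)\setminus\alpha\left(F\right)\right)$, which is sandwiched between $F$ and $\tau\left(F\right)$ and absorbs all of $\tau\left(F\right)$ because every $x\in\tau\left(F\right)$ is either in $\alpha\left(F\right)\subseteq F$ or in $\tau\left(F\right)\setminus\alpha\left(F\right)$. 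Part \textbf{(c)} then needs almost nothing: the inclusion $\supseteq$ holds because each $\left[\alpha\left(F\right),\tau\left(F\right)\right]$ is a block of $\mathbf{P}$ by construction, while $\subseteq$ follows because every block $B$ of $\mathbf{P}$ is nonempty (Definition \ref{def.bip.bip} \textbf{(b)}), so picking any $F\in B$ forces $B=\left[\alpha\left(F\right),\tau\left(F\right)\right]$ by uniqueness of the block through $F$.

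The heart of the matter is part \textbf{(a)}. Here I would first note that, for a given $F\in\mathcal{P}\left(E\right)$, the condition $u\in E\setminus\operatorname*{Shade}F$ is exactly the condition $u\in\tau\left(F\right)\setminus\alpha\left(F\right)$, i.e.\ $u\in\tau\left(F\right)$ and $u\notin\alpha\left(F\right)$. The key claim is that, under this condition, both $F\cup\left\{u\right\}$ and $F\setminus\left\{u\right\}$ still lie in the block $\left[\alpha\left(F\right),\tau\left(F\right)\right]$: indeed $\alpha\left(F\right)\subseteq F\subseteq F\cup\left\{u\right\}\subseteq\tau\left(F\right)$ using $u\in\tau\left(F\right)$, and $\alpha\left(F\right)\subseteq F\setminus\left\{u\right\}\subseteq F\subseteq\tau\left(F\right)$ using $u\notin\alpha\left(F\right)$. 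Since the blocks of $\mathbf{P}$ partition $\mathcal{P}\left(E\right)$, the unique block through $F\cup\left\{u\right\}$ and the one through $F\setminus\left\{u\right\}$ must both equal $\left[\alpha\left(F\right),\tau\left(F\right)\right]$; hence $\alpha$ and $\tau$ are unchanged when $u$ is toggled in or out of $F$, and therefore so is $\operatorname*{Shade}$. This establishes Axiom 1 and Axiom 2 of Definition \ref{def.Shade.shademap} at once, so $\operatorname*{Shade}$ is a shade map.

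The argument is almost entirely bookkeeping rather than ideas, so the ``main obstacle'' is just steering the Boolean-algebra manipulations in \textbf{(b)} and the chains of inclusions in \textbf{(a)} in the right direction; the one spot that deserves care is the equivalence between $u\notin\operatorname*{Shade}F$ and $u\in\tau\left(F\right)\setminus\alpha\left(F\right)$, which quietly uses that $u\in E$ and that the relevant complements are taken inside $E$. If one prefers, part \textbf{(a)} can instead be checked against Axiom 3 via Proposition \ref{prop.Shade.shade-3}, but verifying Axioms 1 and 2 directly through the ``toggling within a block'' observation is the shortest route, and it has the pleasant side effect of making parts \textbf{(b)} and \textbf{(c)} fall out with no additional work.
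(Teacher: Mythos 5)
Your proposal is correct, and it takes essentially the same route as the paper. The key observation (that $u\notin\operatorname*{Shade}F$ means $u\in\tau(F)\setminus\alpha(F)$, so toggling $u$ in or out of $F$ keeps $F$ in the same block, hence leaves $\alpha$, $\tau$, and therefore $\operatorname*{Shade}$ unchanged) is exactly the paper's argument for part (a), and your direct set computation for (b) is what the paper packages into a small Venn-diagram lemma; the reordering of parts is cosmetic.
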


The proof of Theorem \ref{thm.bip.Bip-to-Shade} is rather easy. We lighten our
burden somewhat with a simple lemma (which can be easily checked using Venn diagrams):

\begin{lemma}
\label{lem.bip.Bip-to-Shade.lemb}Let $X$, $Y$, $Z$ and $E$ be four sets such
that $X\subseteq Y\subseteq Z\subseteq E$. Then:

\textbf{(a)} We have $Y\cup\left(  Z\setminus X\right)  =Z$.

\textbf{(b)} We have $Y\cap\left(  E\setminus\left(  Z\setminus X\right)
\right)  =X$.
\end{lemma}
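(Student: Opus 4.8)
The plan is to verify the two set-theoretic identities directly, since both reduce to elementary Boolean algebra once the chain $X\subseteq Y\subseteq Z\subseteq E$ is in hand. For part \textbf{(a)}, I would argue $Y\cup\left(Z\setminus X\right)\subseteq Z$ using $Y\subseteq Z$ and $Z\setminus X\subseteq Z$, and conversely $Z\subseteq Y\cup\left(Z\setminus X\right)$ by taking an arbitrary $z\in Z$ and splitting on whether $z\in X$: if $z\in X$ then $z\in Y$ (since $X\subseteq Y$), and if $z\notin X$ then $z\in Z\setminus X$. For part \textbf{(b)}, I would similarly show $X\subseteq Y\cap\left(E\setminus\left(Z\setminus X\right)\right)$ by noting $X\subseteq Y$ and $X\cap\left(Z\setminus X\right)=\varnothing$ (so every element of $X$, being in $E$, lies in $E\setminus\left(Z\setminus X\right)$), and conversely if $w\in Y\cap\left(E\setminus\left(Z\setminus X\right)\right)$ then $w\in Y\subseteq Z$, so $w\notin Z\setminus X$ forces $w\in X$.

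Alternatively, and perhaps more cleanly, I would derive \textbf{(b)} from \textbf{(a)} by De Morgan duality: complementing both sides of \textbf{(a)} within $E$ gives $E\setminus Z=\left(E\setminus Y\right)\cap\left(E\setminus\left(Z\setminus X\right)\right)$, which is a statement of the same shape as \textbf{(b)} applied to the reversed chain $E\setminus Z\subseteq E\setminus Y\subseteq E\setminus X\subseteq E$. Since there is no genuine obstacle here, I would simply present the element-chase for \textbf{(a)} and then either repeat it for \textbf{(b)} or invoke the complementation trick, whichever the surrounding exposition prefers.

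The only thing requiring any care is bookkeeping: one must use all three inclusions $X\subseteq Y$, $Y\subseteq Z$, and (for staying inside the ambient set in \textbf{(b)}) the fact that everything lives in $E$. In particular, $Y\subseteq Z$ is what makes the element $w\in Y$ in part \textbf{(b)} satisfy $w\in Z$, which is the crucial step forcing $w\in X$. I expect the whole proof to be three or four lines, and the main risk is merely forgetting to invoke one of the hypotheses rather than any conceptual difficulty.

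\begin{proof}
\textbf{(a)} Since $Y\subseteq Z$ and $Z\setminus X\subseteq Z$, we have $Y\cup\left(Z\setminus X\right)\subseteq Z$. Conversely, let $z\in Z$. If $z\in X$, then $z\in Y$ (since $X\subseteq Y$). If $z\notin X$, then $z\in Z\setminus X$. In either case, $z\in Y\cup\left(Z\setminus X\right)$. Hence $Z\subseteq Y\cup\left(Z\setminus X\right)$, and combining the two inclusions yields $Y\cup\left(Z\setminus X\right)=Z$.

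\textbf{(b)} First, $X\subseteq Y$. Moreover, no element of $X$ lies in $Z\setminus X$, so every element of $X$ (being an element of $E$) lies in $E\setminus\left(Z\setminus X\right)$. Thus $X\subseteq Y\cap\left(E\setminus\left(Z\setminus X\right)\right)$. Conversely, let $w\in Y\cap\left(E\setminus\left(Z\setminus X\right)\right)$. Then $w\in Y\subseteq Z$, so $w\in Z$; but $w\notin Z\setminus X$, so $w\in X$. Hence $Y\cap\left(E\setminus\left(Z\setminus X\right)\right)\subseteq X$, and combining the two inclusions yields $Y\cap\left(E\setminus\left(Z\setminus X\right)\right)=X$.
\end{proof}
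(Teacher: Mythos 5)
Your proof is correct and complete. The paper itself gives no written proof of this lemma, merely asserting it "can be easily checked using Venn diagrams"; your element-chase is the straightforward rigorous version of that, so no comparison of approaches is called for.
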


\begin{proof}
[of Theorem \ref{thm.bip.Bip-to-Shade}]\textbf{(a)} We shall prove the
following two statements:

\begin{statement}
\textit{Statement 1:} If $F\in\mathcal{P}\left(  E\right)  $ and $u\in
E\setminus\operatorname*{Shade}F$, then $\operatorname*{Shade}\left(
F\cup\left\{  u\right\}  \right)  =\operatorname*{Shade}F$.
\end{statement}

\begin{statement}
\textit{Statement 2:} If $F\in\mathcal{P}\left(  E\right)  $ and $u\in
E\setminus\operatorname*{Shade}F$, then $\operatorname*{Shade}\left(
F\setminus\left\{  u\right\}  \right)  =\operatorname*{Shade}F$.
\end{statement}

[\textit{Proof of Statement 1:} Let $F\in\mathcal{P}\left(  E\right)  $ and
$u\in E\setminus\operatorname*{Shade}F$. We must prove that
$\operatorname*{Shade}\left(  F\cup\left\{  u\right\}  \right)
=\operatorname*{Shade}F$.

The definition of $\alpha\left(  F\right)  $ and $\tau\left(  F\right)  $
reveals that $\left[  \alpha\left(  F\right)  ,\ \tau\left(  F\right)
\right]  $ is the (unique) block of $\mathbf{P}$ that contains $F$. Thus,
$\left[  \alpha\left(  F\right)  ,\ \tau\left(  F\right)  \right]  $ is a
block of $\mathbf{P}$ and contains $F$.

The definition of $\operatorname*{Shade}$ yields $\operatorname*{Shade}%
F=E\setminus\left(  \tau\left(  F\right)  \setminus\alpha\left(  F\right)
\right)  $. Thus, $E\setminus\operatorname*{Shade}F=\tau\left(  F\right)
\setminus\alpha\left(  F\right)  $ (since $\tau\left(  F\right)
\setminus\alpha\left(  F\right)  $ is a subset of $E$). Now, $u\in
E\setminus\operatorname*{Shade}F=\tau\left(  F\right)  \setminus\alpha\left(
F\right)  $. In other words, $u\in\tau\left(  F\right)  $ and $u\notin%
\alpha\left(  F\right)  $.

On the other hand, the Boolean interval $\left[  \alpha\left(  F\right)
,\ \tau\left(  F\right)  \right]  $ contains $F$. In other words,
$\alpha\left(  F\right)  \subseteq F\subseteq\tau\left(  F\right)  $.

Now, set $F^{\prime}=F\cup\left\{  u\right\}  $. From $F\subseteq\tau\left(
F\right)  $ and $u\in\tau\left(  F\right)  $, we thus obtain $F^{\prime
}\subseteq\tau\left(  F\right)  $. Combined with $\alpha\left(  F\right)
\subseteq F\subseteq F^{\prime}$, this entails $F^{\prime}\in\left[
\alpha\left(  F\right)  ,\ \tau\left(  F\right)  \right]  $. Hence, $\left[
\alpha\left(  F\right)  ,\ \tau\left(  F\right)  \right]  $ is a block of
$\mathbf{P}$ that contains $F^{\prime}$ (since we already know that $\left[
\alpha\left(  F\right)  ,\ \tau\left(  F\right)  \right]  $ is a block of
$\mathbf{P}$).

However, the definition of $\alpha\left(  F^{\prime}\right)  $ and
$\tau\left(  F^{\prime}\right)  $ shows that $\left[  \alpha\left(  F^{\prime
}\right)  ,\ \tau\left(  F^{\prime}\right)  \right]  $ is the (unique) block
of $\mathbf{P}$ that contains $F^{\prime}$. Since we know that $\left[
\alpha\left(  F\right)  ,\ \tau\left(  F\right)  \right]  $ is a block of
$\mathbf{P}$ that contains $F^{\prime}$, we therefore conclude that $\left[
\alpha\left(  F^{\prime}\right)  ,\ \tau\left(  F^{\prime}\right)  \right]
=\left[  \alpha\left(  F\right)  ,\ \tau\left(  F\right)  \right]  $. Hence,
we have%
\[
\alpha\left(  F^{\prime}\right)  =\alpha\left(  F\right)
\ \ \ \ \ \ \ \ \ \ \text{and}\ \ \ \ \ \ \ \ \ \ \tau\left(  F^{\prime
}\right)  =\tau\left(  F\right)
\]
(because a Boolean interval $\left[  U,\ V\right]  $ uniquely determines both
$U$ and $V$). Now, the definition of $\operatorname*{Shade}$ yields%
\[
\operatorname*{Shade}\left(  F^{\prime}\right)  =E\setminus\left(
\underbrace{\tau\left(  F^{\prime}\right)  }_{=\tau\left(  F\right)
}\setminus\underbrace{\alpha\left(  F^{\prime}\right)  }_{=\alpha\left(
F\right)  }\right)  =E\setminus\left(  \tau\left(  F\right)  \setminus
\alpha\left(  F\right)  \right)  =\operatorname*{Shade}F.
\]
In view of $F^{\prime}=F\cup\left\{  u\right\}  $, this rewrites as
$\operatorname*{Shade}\left(  F\cup\left\{  u\right\}  \right)
=\operatorname*{Shade}F$. This proves Statement 1.]

[\textit{Proof of Statement 2:} Let $F\in\mathcal{P}\left(  E\right)  $ and
$u\in E\setminus\operatorname*{Shade}F$. We must prove that
$\operatorname*{Shade}\left(  F\setminus\left\{  u\right\}  \right)
=\operatorname*{Shade}F$.

We proceed exactly as in our above proof of Statement 1 up until the point
where we define $F^{\prime}$. Insead of setting $F^{\prime}=F\cup\left\{
u\right\}  $, we now set $F^{\prime}=F\setminus\left\{  u\right\}  $.
Combining $\alpha\left(  F\right)  \subseteq F$ with $u\notin\alpha\left(
F\right)  $, we obtain $\alpha\left(  F\right)  \subseteq F\setminus\left\{
u\right\}  =F^{\prime}$. Combining this with $F^{\prime}=F\setminus\left\{
u\right\}  \subseteq F\subseteq\tau\left(  F\right)  $, we see that
$F^{\prime}\in\left[  \alpha\left(  F\right)  ,\ \tau\left(  F\right)
\right]  $. From this, we can obtain $\operatorname*{Shade}\left(  F^{\prime
}\right)  =\operatorname*{Shade}F$ by the same argument that we used back in
the proof of Statement 1. In view of $F^{\prime}=F\setminus\left\{  u\right\}
$, this rewrites as $\operatorname*{Shade}\left(  F\setminus\left\{
u\right\}  \right)  =\operatorname*{Shade}F$. This proves Statement 2.]

Now, we have proved Statements 1 and 2. Thus, the map $\operatorname*{Shade}%
:\mathcal{P}\left(  E\right)  \rightarrow\mathcal{P}\left(  E\right)  $
satisfies the two axioms in Definition \ref{def.Shade.shademap}. In other
words, this map is a shade map. This proves Theorem \ref{thm.bip.Bip-to-Shade}
\textbf{(a)}.

\textbf{(b)} Let $F\in\mathcal{P}\left(  E\right)  $. We must prove that
$\alpha\left(  F\right)  =F\cap\operatorname*{Shade}F$ and $\tau\left(
F\right)  =F\cup\left(  E\setminus\operatorname*{Shade}F\right)  $.

As in the above proof of Statement 1, we can see that $\alpha\left(  F\right)
\subseteq F\subseteq\tau\left(  F\right)  $ and $\operatorname*{Shade}%
F=E\setminus\left(  \tau\left(  F\right)  \setminus\alpha\left(  F\right)
\right)  $ and $E\setminus\operatorname*{Shade}F=\tau\left(  F\right)
\setminus\alpha\left(  F\right)  $. Hence, Lemma
\ref{lem.bip.Bip-to-Shade.lemb} \textbf{(b)} (applied to $X=\alpha\left(
F\right)  $ and $Y=F$ and $Z=\tau\left(  F\right)  $) yields that
$F\cap\left(  E\setminus\left(  \tau\left(  F\right)  \setminus\alpha\left(
F\right)  \right)  \right)  =\alpha\left(  F\right)  $. In view of
$\operatorname*{Shade}F=E\setminus\left(  \tau\left(  F\right)  \setminus
\alpha\left(  F\right)  \right)  $, this rewrites as $F\cap
\operatorname*{Shade}F=\alpha\left(  F\right)  $. In other words,
$\alpha\left(  F\right)  =F\cap\operatorname*{Shade}F$.

Furthermore, Lemma \ref{lem.bip.Bip-to-Shade.lemb} \textbf{(a)} (applied to
$X=\alpha\left(  F\right)  $ and $Y=F$ and $Z=\tau\left(  F\right)  $) yields
that $F\cup\left(  \tau\left(  F\right)  \setminus\alpha\left(  F\right)
\right)  =\tau\left(  F\right)  $. In view of $E\setminus\operatorname*{Shade}%
F=\tau\left(  F\right)  \setminus\alpha\left(  F\right)  $, this rewrites as
$F\cup\left(  E\setminus\operatorname*{Shade}F\right)  =\tau\left(  F\right)
$. In other words, $\tau\left(  F\right)  =F\cup\left(  E\setminus
\operatorname*{Shade}F\right)  $. Thus, Theorem \ref{thm.bip.Bip-to-Shade}
\textbf{(b)} is proven.

\textbf{(c)} Each block of $\mathbf{P}$ has the form $\left[  \alpha\left(
F\right)  ,\ \tau\left(  F\right)  \right]  $ for some $F\in\mathcal{P}\left(
E\right)  $ (since it is a Boolean interval, thus nonempty, therefore contains
some $F\in\mathcal{P}\left(  E\right)  $; but then it must be $\left[
\alpha\left(  F\right)  ,\ \tau\left(  F\right)  \right]  $ for this $F$).
Conversely, any set of the form $\left[  \alpha\left(  F\right)
,\ \tau\left(  F\right)  \right]  $ is a block of $\mathbf{P}$ (by the
definition of $\left[  \alpha\left(  F\right)  ,\ \tau\left(  F\right)
\right]  $). Combining these two facts, we conclude that the blocks of
$\mathbf{P}$ are precisely the sets of the form $\left[  \alpha\left(
F\right)  ,\ \tau\left(  F\right)  \right]  $ with $F\in\mathcal{P}\left(
E\right)  $. But this is precisely the claim of Theorem
\ref{thm.bip.Bip-to-Shade} \textbf{(c)}.
\end{proof}

A converse to Theorem \ref{thm.bip.Bip-to-Shade} is provided by the following theorem:

\begin{theorem}
\label{thm.bip.Shade-to-Bip}Let $E$ be a finite set. Let
$\operatorname*{Shade}:\mathcal{P}\left(  E\right)  \rightarrow\mathcal{P}%
\left(  E\right)  $ be a shade map on $E$. Define a map $\alpha:\mathcal{P}%
\left(  E\right)  \rightarrow\mathcal{P}\left(  E\right)  $ by setting%
\[
\alpha\left(  F\right)  =F\cap\operatorname*{Shade}%
F\ \ \ \ \ \ \ \ \ \ \text{for any }F\in\mathcal{P}\left(  E\right)  .
\]
Define a map $\tau:\mathcal{P}\left(  E\right)  \rightarrow\mathcal{P}\left(
E\right)  $ by setting%
\[
\tau\left(  F\right)  =F\cup\left(  E\setminus\operatorname*{Shade}F\right)
\ \ \ \ \ \ \ \ \ \ \text{for any }F\in\mathcal{P}\left(  E\right)  .
\]
Let
\[
\mathbf{P}=\left\{  \left[  \alpha\left(  F\right)  ,\ \tau\left(  F\right)
\right]  \ \mid\ F\in\mathcal{P}\left(  E\right)  \right\}  .
\]

Then:

\textbf{(a)} We have $F\in\left[  \alpha\left(  F\right)  ,\ \tau\left(
F\right)  \right]  $ for any $F\in\mathcal{P}\left(  E\right)  $.

\textbf{(b)} If $F\in\mathcal{P}\left(  E\right)  $ and $G\in\left[
\alpha\left(  F\right)  ,\ \tau\left(  F\right)  \right]  $, then $\left[
\alpha\left(  F\right)  ,\ \tau\left(  F\right)  \right]  =\left[
\alpha\left(  G\right)  ,\ \tau\left(  G\right)  \right]  $.

\textbf{(c)} The set $\mathbf{P}$ is a Boolean interval partition of
$\mathcal{P}\left(  E\right)  $.

\textbf{(d)} We have $\operatorname*{Shade}F=E\setminus\left(  \tau\left(
F\right)  \setminus\alpha\left(  F\right)  \right)  $ for any $F\in
\mathcal{P}\left(  E\right)  $.
\end{theorem}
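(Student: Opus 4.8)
\textbf{Proof proposal for Theorem \ref{thm.bip.Shade-to-Bip}.}
The plan is to verify parts \textbf{(a)}--\textbf{(d)} in the order stated, leveraging the already-proven Theorem \ref{thm.bip.Bip-to-Shade} as much as possible rather than redoing its computations. For part \textbf{(a)}, I would simply observe that $\alpha\left(  F\right)  =F\cap\operatorname*{Shade}F\subseteq F$ and $F\subseteq F\cup\left(  E\setminus\operatorname*{Shade}F\right)  =\tau\left(  F\right)  $, both of which are immediate from the definitions; hence $\alpha\left(  F\right)  \subseteq F\subseteq\tau\left(  F\right)  $, which is exactly the assertion $F\in\left[  \alpha\left(  F\right)  ,\ \tau\left(  F\right)  \right]  $ (and in particular $\alpha\left(  F\right)  \subseteq\tau\left(  F\right)  $, so this is a genuine Boolean interval).

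Part \textbf{(b)} is the crux of the argument, and I expect it to be the main obstacle. Given $G\in\left[  \alpha\left(  F\right)  ,\ \tau\left(  F\right)  \right]  $, I want to show $\operatorname*{Shade}G=\operatorname*{Shade}F$; once this is known, $\alpha\left(  G\right)  =G\cap\operatorname*{Shade}G=G\cap\operatorname*{Shade}F$ and $\tau\left(  G\right)  =G\cup\left(  E\setminus\operatorname*{Shade}F\right)  $, and then Lemma \ref{lem.bip.Bip-to-Shade.lemb} (applied with $X=\alpha\left(  F\right)  $, $Y=G$, $Z=\tau\left(  F\right)  $, using $\alpha\left(  F\right)  \subseteq G\subseteq\tau\left(  F\right)  $ together with $E\setminus\operatorname*{Shade}F=\tau\left(  F\right)  \setminus\alpha\left(  F\right)  $, which holds by Lemma \ref{lem.bip.Bip-to-Shade.lemb}\textbf{(a)},\textbf{(b)} applied to $X=\alpha\left(  F\right),Y=F,Z=\tau\left(  F\right)$) will give $\alpha\left(  G\right)  =\alpha\left(  F\right)  $ and $\tau\left(  G\right)  =\tau\left(  F\right)  $, hence $\left[  \alpha\left(  G\right)  ,\ \tau\left(  G\right)  \right]  =\left[  \alpha\left(  F\right)  ,\ \tau\left(  F\right)  \right]  $. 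So everything reduces to proving $\operatorname*{Shade}G=\operatorname*{Shade}F$ for $G$ in the interval. To do this I would factor the passage from $F$ to $G$ through their meet and join: note $F\cap G\supseteq\alpha\left(  F\right)  =F\cap\operatorname*{Shade}F$, so the elements of $F$ removed in forming $F\cap G$ all lie in $E\setminus\operatorname*{Shade}F$; removing them one at a time and using Axiom 2 repeatedly (the key point being that the shade is unchanged at each step, so the set $E\setminus\operatorname*{Shade}F$ from which we are deleting does not shrink) gives $\operatorname*{Shade}\left(  F\cap G\right)  =\operatorname*{Shade}F$. Symmetrically, $F\cup G\subseteq\tau\left(  F\right)  =F\cup\left(  E\setminus\operatorname*{Shade}F\right)  $, so the elements added in passing from $F$ to $F\cup G$ all lie in $E\setminus\operatorname*{Shade}F$; adding them one at a time and invoking Axiom 1 repeatedly yields $\operatorname*{Shade}\left(  F\cup G\right)  =\operatorname*{Shade}F$. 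Cleanest of all, Lemma \ref{lem.Shade.FuG} can be used directly in place of the hand iteration: since $\left(  F\setminus G\right)  \cap\operatorname*{Shade}G=\varnothing$ would need $\operatorname*{Shade}G$ known, I would instead argue from $F$: first $\operatorname*{Shade}\left(  F\cap G\right)  =\operatorname*{Shade}F$ by applying Axiom 2 finitely often, then $G=\left(  F\cap G\right)  \cup\left(  G\setminus F\right)  $ with $\left(  G\setminus F\right)  \cap\operatorname*{Shade}\left(  F\cap G\right)  =\left(  G\setminus F\right)  \cap\operatorname*{Shade}F=\varnothing$ (because $G\subseteq\tau\left(  F\right)  $ forces $G\setminus F\subseteq E\setminus\operatorname*{Shade}F$), so Lemma \ref{lem.Shade.FuG} gives $\operatorname*{Shade}G=\operatorname*{Shade}\left(  F\cap G\right)  =\operatorname*{Shade}F$, as desired.

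For part \textbf{(c)}, the set $\mathbf{P}$ consists of Boolean intervals by part \textbf{(a)} (each $\left[  \alpha\left(  F\right)  ,\ \tau\left(  F\right)  \right]  $ is a bona fide interval since $\alpha\left(  F\right)  \subseteq\tau\left(  F\right)  $), and their union is all of $\mathcal{P}\left(  E\right)  $ since $F\in\left[  \alpha\left(  F\right)  ,\ \tau\left(  F\right)  \right]  $ for every $F$. Pairwise disjointness is where part \textbf{(b)} pays off: if two blocks $\left[  \alpha\left(  F\right)  ,\ \tau\left(  F\right)  \right]  $ and $\left[  \alpha\left(  G\right)  ,\ \tau\left(  G\right)  \right]  $ share an element $H$, then part \textbf{(b)} applied with base point $F$ and member $H$ gives $\left[  \alpha\left(  F\right)  ,\ \tau\left(  F\right)  \right]  =\left[  \alpha\left(  H\right)  ,\ \tau\left(  H\right)  \right]  $, and likewise with base point $G$, so the two blocks coincide. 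Hence $\mathbf{P}$ is a Boolean interval partition.

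Finally, part \textbf{(d)} is just algebra of sets, independent of the partition structure: I would apply Lemma \ref{lem.bip.Bip-to-Shade.lemb}\textbf{(b)}\ (no—\textbf{(a)}) with $X=\alpha\left(  F\right)  $, $Y=F$, $Z=\tau\left(  F\right)  $, using $\alpha\left(  F\right)  \subseteq F\subseteq\tau\left(  F\right)  \subseteq E$ from part \textbf{(a)}; this gives $\tau\left(  F\right)  \setminus\alpha\left(  F\right)  =\bigl(F\cup\left(  E\setminus\operatorname*{Shade}F\right)\bigr)\setminus\left(  F\cap\operatorname*{Shade}F\right)  $, and a short computation (or a Venn diagram) shows this equals $E\setminus\operatorname*{Shade}F$, whence $E\setminus\left(  \tau\left(  F\right)  \setminus\alpha\left(  F\right)  \right)  =\operatorname*{Shade}F$. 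Alternatively, one simply notes $\tau\left(  F\right)  \setminus\alpha\left(  F\right)  =\left(  F\cup\left(  E\setminus\operatorname*{Shade}F\right)\right)\cap\left(  E\setminus F\right)  \cup\left(E\setminus\operatorname*{Shade}F\right)\cap\dots$—so rather than belabor the Boolean identity I would invoke Lemma \ref{lem.bip.Bip-to-Shade.lemb} in both parts as in the proof of Theorem \ref{thm.bip.Bip-to-Shade}\textbf{(b)}, since the computation is literally the same. The only genuine content, then, is the shade-invariance statement inside part \textbf{(b)}, and everything else is bookkeeping.
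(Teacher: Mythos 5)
Your proposal is correct and follows essentially the same route as the paper. The crux of part \textbf{(b)} is, exactly as in the paper, to establish $\operatorname*{Shade}G=\operatorname*{Shade}F$ by passing from $F$ to $F\cap G$ via repeated deletions inside $E\setminus\operatorname*{Shade}F$ (i.e., Lemma \ref{lem.Shade.FoG}, equivalently finite iteration of Axiom~2) and then from $F\cap G$ to $G$ by Lemma \ref{lem.Shade.FuG}; once $\operatorname*{Shade}G=\operatorname*{Shade}F$ is in hand, the conclusions $\alpha(G)=\alpha(F)$, $\tau(G)=\tau(F)$ and hence equal intervals are routine set algebra. One small note on bookkeeping: you attribute the identity $\tau(F)\setminus\alpha(F)=E\setminus\operatorname*{Shade}F$ to Lemma \ref{lem.bip.Bip-to-Shade.lemb}, but that lemma does not directly yield it from the definitions of $\alpha$ and $\tau$; the paper uses the separate Venn-diagram fact Lemma \ref{lem.bip.Shade-to-Bip.lem1} (applied to $X=F$, $Y=\operatorname*{Shade}F$) for exactly this, and citing it would make your write-up cleaner. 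Everything else (parts \textbf{(a)}, \textbf{(c)}, \textbf{(d)}) matches the paper's argument.
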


To prove this theorem, we will need the following variant of Lemma
\ref{lem.Shade.FuG}:

\begin{lemma}
\label{lem.Shade.FoG}Let $E$ be a set. Let $\operatorname*{Shade}%
:\mathcal{P}\left(  E\right)  \rightarrow\mathcal{P}\left(  E\right)  $ be a
shade map. Let $A$ and $B$ be two subsets of $E$ such that $B$ is finite and
$B\cap\operatorname*{Shade}A=\varnothing$. Then, $\operatorname*{Shade}\left(
A\setminus B\right)  =\operatorname*{Shade}A$.
\end{lemma}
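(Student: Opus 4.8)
The plan is to reduce Lemma \ref{lem.Shade.FoG} to Lemma \ref{lem.Shade.FuG} by a change of variables, rather than re-running an induction from scratch. The key observation is that Lemma \ref{lem.Shade.FuG} tells us what happens when we \emph{add} a finite set of elements that are outside the shade, while here we want to \emph{remove} a finite set of elements that are outside the shade. So first I would set $A' = A \setminus B$ and try to recognize the hypotheses of Lemma \ref{lem.Shade.FuG} for the pair $(A', B)$; if I can show $B \cap \operatorname*{Shade} A' = \varnothing$, then Lemma \ref{lem.Shade.FuG} gives $\operatorname*{Shade}(A' \cup B) = \operatorname*{Shade} A'$, and since $A' \cup B = (A \setminus B) \cup B = A \cup B \supseteq A$ and also $A' \cup B$ contains everything in $A$ together with $B$, I would need $A \cup B$ to equal something controllable — actually the cleanest route is to note $A' \cup B \supseteq A$ but that is the wrong direction, so let me instead go the other way.

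The better approach: apply Lemma \ref{lem.Shade.FuG} with its ``$A$'' taken to be our $A \setminus B$ and its ``$B$'' taken to be our $B \cap A$ (the part of $B$ actually sitting inside $A$). Then $(A \setminus B) \cup (B \cap A) = A$. So if I can verify the hypothesis $(B \cap A) \cap \operatorname*{Shade}(A \setminus B) = \varnothing$, Lemma \ref{lem.Shade.FuG} immediately yields $\operatorname*{Shade} A = \operatorname*{Shade}(A \setminus B)$, which is exactly the claim. Thus the whole lemma boils down to proving that $B \cap \operatorname*{Shade}(A \setminus B) = \varnothing$, i.e., that removing $B$ from $A$ does not bring any element of $B$ into the shade. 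This is where the real work is, and I expect it to be the main obstacle.

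To prove $B \cap \operatorname*{Shade}(A \setminus B) = \varnothing$, I would argue as follows. Suppose for contradiction some $b \in B$ lies in $\operatorname*{Shade}(A \setminus B)$. Since $\operatorname*{Shade}$ is a shade map and $A \setminus B \subseteq A$ with $B$ finite, I can recover $A$ from $A \setminus B$ by re-adding the elements of $B \cap A$ one at a time; at each stage, the element being re-added lies outside the current shade (this is the content one extracts from the proof of Lemma \ref{lem.Shade.FuG}, or one can induct directly using Axiom 1), so by Axiom 1 the shade never changes, giving $\operatorname*{Shade} A = \operatorname*{Shade}(A \setminus B)$ as a byproduct — but wait, this is circular since it is what we want. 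So instead I would prove the nonemptiness claim by induction on $|B|$ in parallel with the shade-invariance: define a chain $A = A_0 \supseteq A_1 \supseteq \cdots \supseteq A_k = A \setminus B$ obtained by deleting the elements of $B \cap A$ one at a time, and prove by induction that $\operatorname*{Shade} A_i = \operatorname*{Shade} A$ and that the element deleted at step $i+1$ lies outside $\operatorname*{Shade} A_i$. The base case is trivial; in the inductive step, the element $u$ to be deleted lies in $B$, hence outside $\operatorname*{Shade} A = \operatorname*{Shade} A_i$ (by the hypothesis $B \cap \operatorname*{Shade} A = \varnothing$), so Axiom 2 gives $\operatorname*{Shade} A_{i+1} = \operatorname*{Shade}(A_i \setminus \{u\}) = \operatorname*{Shade} A_i = \operatorname*{Shade} A$. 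This closes the induction and yields $\operatorname*{Shade}(A \setminus B) = \operatorname*{Shade} A$ directly, so in fact no reduction to Lemma \ref{lem.Shade.FuG} is even needed — a direct induction on $|B|$ using Axiom 2 suffices, exactly mirroring how Lemma \ref{lem.Shade.FuG} is presumably proved using Axiom 1.

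The one subtlety worth flagging is that $B$ need not be a subset of $A$; elements of $B \setminus A$ play no role since $A \setminus B = A \setminus (B \cap A)$, so at the very start I would replace $B$ by $B \cap A$ (still finite, still disjoint from $\operatorname*{Shade} A$) and proceed with the induction above. Apart from that bookkeeping, the argument is a routine induction on $|B|$, and the main obstacle — such as it is — is simply setting up the induction hypothesis strongly enough (carrying both $\operatorname*{Shade} A_i = \operatorname*{Shade} A$ and the disjointness needed to apply Axiom 2 at the next step) so that the deleted element is certifiably outside the current shade.
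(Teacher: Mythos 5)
Your final argument — a direct induction on $|B|$ (after first replacing $B$ by $B\cap A$), deleting elements one at a time and applying Axiom~2 at each step, with the inductive hypothesis that the shade is unchanged ensuring the element about to be deleted is still outside it — is exactly the paper's proof, which is stated as a one-line induction on $|B|$ using Axiom~2. You correctly noticed mid-stream that the initial attempt to reduce to Lemma~\ref{lem.Shade.FuG} was both circular and unnecessary, so the detour costs nothing and the end result is sound.
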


\begin{proof}
[of Lemma \ref{lem.Shade.FoG}]Induction on $\left\vert B\right\vert $, using
Axiom 2 from Definition \ref{def.Shade}.
\end{proof}

We will also use another simple set-theoretical lemma (easily checked using
Venn diagrams):

\begin{lemma}
\label{lem.bip.Shade-to-Bip.lem1}Let $E$ be a set. Let $X$ and $Y$ be two
subsets of $E$. Then,%
\[
E\setminus Y=\left(  X\cup\left(  E\setminus Y\right)  \right)  \setminus
\left(  X\cap Y\right)  .
\]

\end{lemma}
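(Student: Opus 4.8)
The plan is to verify this identity by a short computation in the Boolean algebra of subsets of $E$, taking all complements relative to $E$; this is legitimate since $X$ and $Y$ are subsets of $E$, so every set that will appear is a subset of $E$. First I would rewrite the right-hand side using the fact that $S\setminus T=S\cap\left(E\setminus T\right)$ for all $S,T\subseteq E$, obtaining
\[
\left(X\cup\left(E\setminus Y\right)\right)\setminus\left(X\cap Y\right)=\left(X\cup\left(E\setminus Y\right)\right)\cap\left(E\setminus\left(X\cap Y\right)\right).
\]
Next I would apply De Morgan's law $E\setminus\left(X\cap Y\right)=\left(E\setminus X\right)\cup\left(E\setminus Y\right)$, turning the right-hand side into $\left(X\cup\left(E\setminus Y\right)\right)\cap\left(\left(E\setminus X\right)\cup\left(E\setminus Y\right)\right)$.

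At this point the distributive law of the Boolean lattice, in the form $\left(A\cup C\right)\cap\left(B\cup C\right)=\left(A\cap B\right)\cup C$ applied with $A=X$, $B=E\setminus X$ and $C=E\setminus Y$, collapses the expression to $\left(X\cap\left(E\setminus X\right)\right)\cup\left(E\setminus Y\right)$. Since $X\cap\left(E\setminus X\right)=\varnothing$, this is simply $E\setminus Y$, which is exactly the left-hand side, completing the proof.

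There is essentially no obstacle here: the only point deserving a moment's care is that the complements in the De Morgan and distributivity steps must all be read relative to $E$, which is consistent because everything in sight is contained in $E$. An alternative (and equally short) route — the one the phrase ``easily checked using Venn diagrams'' alludes to — is a direct double-inclusion element chase: an element $a$ lies in $E\setminus Y$ iff $a\in E$ and $a\notin Y$, and one checks that this holds iff $a$ lies in $X\cup\left(E\setminus Y\right)$ but not in $X\cap Y$, splitting into the cases $a\in X$ and $a\notin X$. Either presentation takes only a few lines; I would go with the Boolean-algebra computation for brevity.
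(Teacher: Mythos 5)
Your proof is correct: each step (the rewriting of $\setminus$ as intersection with a complement in $E$, De Morgan, distributivity, and $X\cap\left(E\setminus X\right)=\varnothing$) is valid, and the paper itself offers no proof beyond the remark that the identity is ``easily checked using Venn diagrams,'' so your Boolean-algebra computation (or the element chase you sketch) is exactly the intended kind of verification.
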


\begin{proof}
[of Theorem \ref{thm.bip.Shade-to-Bip}]\textbf{(a)} Let $F\in\mathcal{P}%
\left(  E\right)  $. The definition of $\alpha$ yields $\alpha\left(
F\right)  =F\cap\operatorname*{Shade}F\subseteq F$. The definition of $\tau$
yields $\tau\left(  F\right)  =F\cup\left(  E\setminus\operatorname*{Shade}%
F\right)  \supseteq F$, so that $F\subseteq\tau\left(  F\right)  $. Thus,
$\alpha\left(  F\right)  \subseteq F\subseteq\tau\left(  F\right)  $. In other
words, $F\in\left[  \alpha\left(  F\right)  ,\ \tau\left(  F\right)  \right]
$. This proves Theorem \ref{thm.bip.Shade-to-Bip} \textbf{(a)}.

\textbf{(b)} Let $F\in\mathcal{P}\left(  E\right)  $ and $G\in\left[
\alpha\left(  F\right)  ,\ \tau\left(  F\right)  \right]  $. From $G\in\left[
\alpha\left(  F\right)  ,\ \tau\left(  F\right)  \right]  $, we obtain
$\alpha\left(  F\right)  \subseteq G\subseteq\tau\left(  F\right)  $. Thus,
$G\subseteq\tau\left(  F\right)  =F\cup\left(  E\setminus\operatorname*{Shade}%
F\right)  $ (by the definition of $\tau$). From this, straightforward
set-theoretical reasoning leads to%
\begin{equation}
\left(  G\setminus F\right)  \cap\operatorname*{Shade}F=\varnothing.
\label{pf.thm.bip.Shade-to-Bip.b.1}%
\end{equation}

Furthermore, the definition of $\alpha$ yields $F\cap\operatorname*{Shade}%
F=\alpha\left(  F\right)  \subseteq G$. From this, straightforward
set-theoretical reasoning leads to%
\begin{equation}
\left(  F\setminus G\right)  \cap\operatorname*{Shade}F=\varnothing.
\label{pf.thm.bip.Shade-to-Bip.b.2}%
\end{equation}
Hence, Lemma \ref{lem.Shade.FoG} (applied to $A=F$ and $B=F\setminus G$)
yields $\operatorname*{Shade}\left(  F\setminus\left(  F\setminus G\right)
\right)  =\operatorname*{Shade}F$. In view of $F\setminus\left(  F\setminus
G\right)  =F\cap G$, this rewrites as $\operatorname*{Shade}\left(  F\cap
G\right)  =\operatorname*{Shade}F$. Hence, (\ref{pf.thm.bip.Shade-to-Bip.b.1})
rewrites as
\[
\left(  G\setminus F\right)  \cap\operatorname*{Shade}\left(  F\cap G\right)
=\varnothing.
\]
Thus, Lemma \ref{lem.Shade.FuG} (applied to $A=F\cap G$ and $B=G\setminus F$)
yields \newline$\operatorname*{Shade}\left(  \left(  F\cap G\right)
\cup\left(  G\setminus F\right)  \right)  =\operatorname*{Shade}\left(  F\cap
G\right)  =\operatorname*{Shade}F$. In view of $\left(  F\cap G\right)
\cup\left(  G\setminus F\right)  =G$, this rewrites as
\[
\operatorname*{Shade}G=\operatorname*{Shade}F.
\]

By further set-theoretical reasoning, using the equalities
(\ref{pf.thm.bip.Shade-to-Bip.b.1}) and (\ref{pf.thm.bip.Shade-to-Bip.b.2})
(and nothing else specific to the sets $F$, $G$ and $\operatorname*{Shade}F$),
we can easily obtain
\[
F\cap\operatorname*{Shade}F=G\cap\operatorname*{Shade}%
F\ \ \ \ \ \ \ \ \ \ \text{and}\ \ \ \ \ \ \ \ \ \ F\cup\left(  E\setminus
\operatorname*{Shade}F\right)  =G\cup\left(  E\setminus\operatorname*{Shade}%
F\right)  .
\]
In view of%
\begin{align*}
\alpha\left(  F\right)   &  =F\cap\operatorname*{Shade}%
F\ \ \ \ \ \ \ \ \ \ \text{and}\ \ \ \ \ \ \ \ \ \ \alpha\left(  G\right)
=G\cap\underbrace{\operatorname*{Shade}G}_{=\operatorname*{Shade}F}%
=G\cap\operatorname*{Shade}F\ \ \ \ \ \ \ \ \ \ \text{and}\\
\tau\left(  F\right)   &  =F\cup\left(  E\setminus\operatorname*{Shade}%
F\right)  \ \ \ \ \ \ \ \ \ \ \text{and}\ \ \ \ \ \ \ \ \ \ \tau\left(
G\right)  =G\cup\left(  E\setminus\underbrace{\operatorname*{Shade}%
G}_{=\operatorname*{Shade}F}\right)  =G\cup\left(  E\setminus
\operatorname*{Shade}F\right)  ,
\end{align*}
we can rewrite these two equalities as $\alpha\left(  F\right)  =\alpha\left(
G\right)  $ and $\tau\left(  F\right)  =\tau\left(  G\right)  $. Thus,
$\left[  \alpha\left(  F\right)  ,\ \tau\left(  F\right)  \right]  =\left[
\alpha\left(  G\right)  ,\ \tau\left(  G\right)  \right]  $. Theorem
\ref{thm.bip.Shade-to-Bip} \textbf{(b)} is thus proven.

\textbf{(c)} Clearly, $\mathbf{P}$ is a set of Boolean intervals of
$\mathcal{P}\left(  E\right)  $. Moreover, the Boolean intervals $\left[
\alpha\left(  F\right)  ,\ \tau\left(  F\right)  \right]  \in\mathbf{P}$ are
pairwise disjoint (because if two of these intervals have some element $G$ in
common, then Theorem \ref{thm.bip.Shade-to-Bip} \textbf{(b)} yields that they
must both equal $\left[  \alpha\left(  G\right)  ,\ \beta\left(  G\right)
\right]  $), and their union is $\mathcal{P}\left(  E\right)  $ (since every
$F\in\mathcal{P}\left(  E\right)  $ satisfies $F\in\left[  \alpha\left(
F\right)  ,\ \tau\left(  F\right)  \right]  $ by Theorem
\ref{thm.bip.Shade-to-Bip} \textbf{(a)}). Thus, $\mathbf{P}$ is a Boolean
interval partition of $\mathcal{P}\left(  E\right)  $. This proves Theorem
\ref{thm.bip.Shade-to-Bip} \textbf{(c)}.

\textbf{(d)} Let $F\in\mathcal{P}\left(  E\right)  $. Then, Lemma
\ref{lem.bip.Shade-to-Bip.lem1} (applied to $X=F$ and $Y=\operatorname*{Shade}%
F$) yields%
\[
E\setminus\operatorname*{Shade}F=\underbrace{\left(  F\cup\left(
E\setminus\operatorname*{Shade}F\right)  \right)  }_{\substack{=\tau\left(
F\right)  \\\text{(since }\tau\left(  F\right)  =F\cup\left(  E\setminus
\operatorname*{Shade}F\right)  \text{)}}}\setminus\underbrace{\left(
F\cap\operatorname*{Shade}F\right)  }_{\substack{=\alpha\left(  F\right)
\\\text{(since }\alpha\left(  F\right)  =F\cap\operatorname*{Shade}F\text{)}%
}}=\tau\left(  F\right)  \setminus\alpha\left(  F\right)  .
\]
However, $\operatorname*{Shade}F$ is a subset of $E$; thus,%
\[
\operatorname*{Shade}F=E\setminus\underbrace{\left(  E\setminus
\operatorname*{Shade}F\right)  }_{=\tau\left(  F\right)  \setminus
\alpha\left(  F\right)  }=E\setminus\left(  \tau\left(  F\right)
\setminus\alpha\left(  F\right)  \right)  .
\]
This proves Theorem \ref{thm.bip.Shade-to-Bip} \textbf{(d)}.
\end{proof}

Combining Theorem \ref{thm.bip.Bip-to-Shade} with Theorem
\ref{thm.bip.Shade-to-Bip}, we obtain the following:

\begin{theorem}
\label{thm.bip.cryptomor}Let $E$ be a finite set. Then, there is a bijection
from the set%
\[
\left\{  \text{shade maps }\operatorname*{Shade}:\mathcal{P}\left(  E\right)
\rightarrow\mathcal{P}\left(  E\right)  \right\}
\]
to the set%
\[
\left\{  \text{Boolean interval partitions of }\mathcal{P}\left(  E\right)
\right\}  .
\]
It sends each map $\operatorname*{Shade}$ to the Boolean interval partition
$\mathbf{P}$ defined in Theorem \ref{thm.bip.Shade-to-Bip}\textbf{ (c)}. Its
inverse map sends each Boolean interval partition $\mathbf{P}$ to the map
$\operatorname*{Shade}$ defined in Theorem \ref{thm.bip.Bip-to-Shade}.
\end{theorem}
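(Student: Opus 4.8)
The plan is to show that the two maps described---call them $\Phi$ (sending a shade map to the Boolean interval partition $\mathbf{P}$ of Theorem \ref{thm.bip.Shade-to-Bip}(c)) and $\Psi$ (sending a Boolean interval partition to the shade map of Theorem \ref{thm.bip.Bip-to-Shade})---are mutually inverse. Both maps are already well-defined: $\Phi$ lands in the set of Boolean interval partitions by Theorem \ref{thm.bip.Shade-to-Bip}(c), and $\Psi$ lands in the set of shade maps by Theorem \ref{thm.bip.Bip-to-Shade}(a). So essentially all the work has been done, and what remains is to verify $\Psi \circ \Phi = \operatorname{id}$ and $\Phi \circ \Psi = \operatorname{id}$, each of which will be a one-line appeal to a part of the two preceding theorems.

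First I would verify $\Phi \circ \Psi = \operatorname{id}$: start with a Boolean interval partition $\mathbf{P}$, let $\operatorname*{Shade} = \Psi(\mathbf{P})$ be the associated shade map, and define $\alpha$, $\tau$ from $\operatorname*{Shade}$ as in Theorem \ref{thm.bip.Shade-to-Bip}. We must check that the partition $\left\{ \left[ \alpha(F), \tau(F) \right] \mid F \in \mathcal{P}(E) \right\}$ produced by $\Phi$ equals $\mathbf{P}$. But Theorem \ref{thm.bip.Bip-to-Shade}(b) tells us that the maps $\alpha$ and $\tau$ reconstructed from $\operatorname*{Shade}$ agree with the maps $\alpha$, $\tau$ attached to the blocks of $\mathbf{P}$ (the ones in the statement of Theorem \ref{thm.bip.Bip-to-Shade}), and Theorem \ref{thm.bip.Bip-to-Shade}(c) then says precisely that $\mathbf{P} = \left\{ \left[ \alpha(F), \tau(F) \right] \mid F \in \mathcal{P}(E) \right\}$. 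Hence $\Phi(\Psi(\mathbf{P})) = \mathbf{P}$.

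Next I would verify $\Psi \circ \Phi = \operatorname{id}$: start with a shade map $\operatorname*{Shade}$, form $\alpha$, $\tau$, $\mathbf{P}$ as in Theorem \ref{thm.bip.Shade-to-Bip}, and then apply $\Psi$ to $\mathbf{P}$ to get a new shade map $\operatorname*{Shade}'$. For each $F$, the unique block of $\mathbf{P}$ containing $F$ is $\left[ \alpha(F), \tau(F) \right]$ (by Theorem \ref{thm.bip.Shade-to-Bip}(a) for membership and (b) for uniqueness/well-definedness), so by the defining formula in Theorem \ref{thm.bip.Bip-to-Shade} we get $\operatorname*{Shade}' F = E \setminus \left( \tau(F) \setminus \alpha(F) \right)$. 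But Theorem \ref{thm.bip.Shade-to-Bip}(d) says exactly that $\operatorname*{Shade} F = E \setminus \left( \tau(F) \setminus \alpha(F) \right)$. Hence $\operatorname*{Shade}' = \operatorname*{Shade}$, i.e. $\Psi(\Phi(\operatorname*{Shade})) = \operatorname*{Shade}$.

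The only genuinely delicate point---the ``main obstacle''---is bookkeeping: making sure that the $\alpha$ and $\tau$ appearing in Theorem \ref{thm.bip.Bip-to-Shade} (defined via blocks of a given partition) and those appearing in Theorem \ref{thm.bip.Shade-to-Bip} (defined via $F \cap \operatorname*{Shade}F$ and $F \cup (E \setminus \operatorname*{Shade}F)$) are being matched up correctly in each direction, since the letters are reused. Once one is careful that Theorem \ref{thm.bip.Bip-to-Shade}(b) is the statement bridging the two definitions, there is nothing more to do; the finiteness of $E$ is needed only insofar as it was already needed to invoke Theorems \ref{thm.bip.Shade-to-Bip} and \ref{thm.bip.cryptomor}'s hypotheses. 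I expect the whole proof to be three or four sentences long.
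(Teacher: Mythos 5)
Your argument is correct and is exactly the expansion of the paper's one-line proof ("This follows easily from Theorems \ref{thm.bip.Bip-to-Shade} and \ref{thm.bip.Shade-to-Bip}"): well-definedness of the two maps from Theorem \ref{thm.bip.Bip-to-Shade}(a) and Theorem \ref{thm.bip.Shade-to-Bip}(c), the identity $\Phi\circ\Psi=\operatorname{id}$ from Theorem \ref{thm.bip.Bip-to-Shade}(b) and (c), and the identity $\Psi\circ\Phi=\operatorname{id}$ from Theorem \ref{thm.bip.Shade-to-Bip}(a), (c) and (d). The bookkeeping about the two competing definitions of $\alpha$ and $\tau$ is handled correctly.
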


\begin{proof}
[of Theorem \ref{thm.bip.cryptomor}]This follows easily from Theorem
\ref{thm.bip.Bip-to-Shade} and Theorem \ref{thm.bip.Shade-to-Bip}.
\end{proof}

\begin{question}
According to Theorem \ref{thm.antimat.Shade}, any antimatroid gives rise to an
inclusion-reversing shade map, which in turn gives rise to a Boolean interval
partition by Theorem \ref{thm.bip.Shade-to-Bip}. Is this construction
equivalent to the construction of a Boolean interval partition from an
antimatroid described by Gordon and McMahon in \cite[Theorem 2.5]{GorMah97}?
\end{question}

\section{The topological viewpoint}

Now we return to the setting of Section \ref{sec.elser}. We aim to reinterpret
Theorem \ref{thm.elser-Shade0} in the terms of combinatorial topology
(specifically, finite simplicial complexes) and strengthen it. We recall the
definition of a \emph{simplicial complex}:\footnote{We forget all the
conventions we have introduced so far. (Thus, for example, $E$ no longer means
the edge set of a graph $\Gamma$.)}

\begin{definition}
Let $E$ be a finite set. A \emph{simplicial complex} on ground set $E$ means a
subset $\mathcal{A}$ of the power set of $E$ with the following property:

\begin{statement}
If $P\in\mathcal{A}$ and $Q\subseteq P$, then $Q\in\mathcal{A}$.
\end{statement}
\end{definition}

Thus, in terms of posets, a simplicial complex on ground set $E$ means a
down-closed subset of the Boolean lattice on $E$. Note that a simplicial
complex contains the empty set $\varnothing$ unless it is empty itself.

We refer to \cite{Kozlov20} for context and theory about simplicial complexes.
We shall restrict ourselves to the few definitions relevant to what we will
prove. The following is straightforward to check:

\begin{proposition}
\label{prop.elser-sc}Let us use the notations from Section \ref{sec.elser} as
well as Definition \ref{def.Shade}. Let $G$ be any subset of $E$. Let%
\begin{equation}
\mathcal{A}=\left\{  F\subseteq E\ \mid\ G\not \subseteq \operatorname*{Shade}%
F\right\}  . \label{eq.prop.elser-sc.A=}%
\end{equation}
Then, $\mathcal{A}$ is a simplicial complex on ground set $E$.
\end{proposition}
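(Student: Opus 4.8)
The goal is to show that $\mathcal{A} = \{F \subseteq E \mid G \not\subseteq \operatorname{Shade} F\}$ is a simplicial complex, i.e.\ that it is down-closed: if $P \in \mathcal{A}$ and $Q \subseteq P$, then $Q \in \mathcal{A}$. The plan is to unwind the definition and reduce everything to the monotonicity of $\operatorname{Shade}$ established in Lemma~\ref{lem.Shade-monoton}. Concretely, let $P \in \mathcal{A}$ and $Q \subseteq P$. By the definition of $\mathcal{A}$, we have $G \not\subseteq \operatorname{Shade} P$. We must show $G \not\subseteq \operatorname{Shade} Q$.

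First I would invoke Lemma~\ref{lem.Shade-monoton} with $A = Q$ and $B = P$ (this applies since $Q \subseteq P$), obtaining $\operatorname{Shade} Q \subseteq \operatorname{Shade} P$. Then, since $G \not\subseteq \operatorname{Shade} P$ and $\operatorname{Shade} Q \subseteq \operatorname{Shade} P$, it follows immediately that $G \not\subseteq \operatorname{Shade} Q$ (a subset of $\operatorname{Shade} P$ can only fail to contain $G$ if $\operatorname{Shade} P$ itself fails to contain $G$ — more precisely, if $G \subseteq \operatorname{Shade} Q$ held, then $G \subseteq \operatorname{Shade} Q \subseteq \operatorname{Shade} P$ would contradict $G \not\subseteq \operatorname{Shade} P$). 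Hence $Q \in \mathcal{A}$, which is exactly the down-closedness property required in the definition of a simplicial complex.

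Since $\mathcal{A}$ is a subset of the power set $\mathcal{P}(E)$ (this is immediate from its defining formula) and $E$ is finite (it is the edge set of the finite graph $\Gamma$), the verification above shows that $\mathcal{A}$ is a simplicial complex on ground set $E$, proving Proposition~\ref{prop.elser-sc}.

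There is essentially no obstacle here: the only substantive ingredient is the inclusion-preserving nature of $\operatorname{Shade}$, which is precisely Lemma~\ref{lem.Shade-monoton}, and everything else is a one-line set-theoretic deduction. The statement is flagged as ``straightforward to check'' in the excerpt, and indeed the proof is a direct contrapositive argument layered on monotonicity. (One could alternatively phrase it as: the complement $\mathcal{P}(E) \setminus \mathcal{A} = \{F \mid G \subseteq \operatorname{Shade} F\}$ is up-closed by Lemma~\ref{lem.Shade-monoton}, so $\mathcal{A}$ is down-closed — but the direct argument is just as short.)
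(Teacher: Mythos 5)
Your proof is correct and takes exactly the approach the paper intends: the paper states the proposition is "straightforward to check," and the only ingredient needed is Lemma~\ref{lem.Shade-monoton} (monotonicity of $\operatorname{Shade}$), which is precisely what you use. Your contrapositive argument via $\operatorname{Shade} Q \subseteq \operatorname{Shade} P$ is the standard one-line verification of down-closedness.
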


\begin{example}
The following pictures illustrate this simplicial complex on an example.
\[%
\begin{tikzpicture}%
[-,auto,node distance=3cm, thick,main node/.style={circle,fill=blue!20,draw}]
\node[main node] (1) {$v$};
\node[main node] [below left of=1] (2) {};
\node[main node] [below right of=1] (4) {};
\node[main node] [above of=1] (3) {};
\path[every node/.style={font=\sffamily\small}]
(1) -- (3) node[midway, right] {$c$}
(3) -- (2) node[midway, left] {$b$}
(2) -- (4) node[midway, below] {$f$}
(4) -- (3) node[midway, right] {$d$}
(2) -- (1) node[midway, below] {$a$}
(1) -- (4) node[midway, below] {$e$};
\path[every node/.style={font=\sffamily\small}] (1) edge
(3) (3) edge (2) (2) edge
(4) (4) edge (3) (2) edge
(1) (1) edge (4);
\end{tikzpicture}
\qquad\qquad\raisebox{15pt}{
\begin{tikzpicture}[thick, main node/.style={circle,fill=yellow!20,draw}]
\filldraw[draw=black, fill=red!70] (-1, -1) -- (0, 1) -- (1, -1) -- cycle;
\node[main node] (a) at (0, 2.4) {$a$};
\node[main node] (b) at (-1, -1) {$b$};
\node[main node] (c) at (2, -2) {$c$};
\node[main node] (d) at (0, 1) {$d$};
\node[main node] (e) at (-2, -2) {$e$};
\node[main node] (f) at (1, -1) {$f$};
\draw(a) edge (d);
\draw(b) edge (e);
\draw(c) edge (f);
\end{tikzpicture}
}
\]
The left picture is a graph $\Gamma$ (with the vertex labelled $v$ playing the
role of $v$), whereas the right picture shows the corresponding simplicial
complex $\mathcal{A}$ for $G=E$ (that is, the simplicial complex whose faces
are the subsets of $E$ that are not pandemic).
\end{example}

To state the main result of this section, we need the following
definition\footnote{We continue using the definitions from Subsection
\ref{subsect.sets}, even though $E$ is now just an arbitrary finite set.}:

\begin{definition}
\label{def.matchings}Let $E$ be a finite set. Let $\mathcal{A}$ be a
simplicial complex on ground set $E$.

\textbf{(a)} A complete matching $\mu$ of $\mathcal{A}$ is said to be
\emph{acyclic} if there exists no tuple $\left(  B_{1},B_{2},\ldots
,B_{n}\right)  $ of distinct sets $B_{1},B_{2},\ldots,B_{n}\in\mathcal{A}$
with the property that $n\geq2$ and that%
\[
\mu\left(  B_{i}\right)  \prec B_{i}\ \ \ \ \ \ \ \ \ \ \text{for each }%
i\in\left\{  1,2,\ldots,n\right\}
\]
and%
\[
\mu\left(  B_{i}\right)  \prec B_{i+1}\ \ \ \ \ \ \ \ \ \ \text{for each }%
i\in\left\{  1,2,\ldots,n-1\right\}
\]
and%
\[
\mu\left(  B_{n}\right)  \prec B_{1}.
\]

\textbf{(b)} The simplicial complex $\mathcal{A}$ is said to be
\emph{collapsible} if it has an acyclic complete matching.
\end{definition}

Note that our definition of an ``acyclic complete
matching'' is a particular case of \cite[Definition
10.7]{Kozlov20}. Our notion of ``collapsible''
is equivalent to the classical notion of ``collapsible''
(even though the latter is usually defined
differently) because of \cite[Theorem 10.9]{Kozlov20}.

We now claim:

\begin{theorem}
\label{thm.elser-coll}Let us use the notations from Section \ref{sec.elser} as
well as Definition \ref{def.Shade}. Let $G$ be any subset of $E$. Define
$\mathcal{A}$ as in (\ref{eq.prop.elser-sc.A=}). Then, the simplicial complex
$\mathcal{A}$ is collapsible.
\end{theorem}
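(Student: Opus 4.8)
The plan is to build an \emph{acyclic} complete matching on $\mathcal{A}$ by reusing the involution $\mu$ already constructed in the proof of Theorem \ref{thm.elser-Shade0}. Recall that we fixed a total order on $E$, and for $F\in\mathcal{A}$ we set $\varepsilon(F)$ to be the smallest element of $G\setminus\operatorname*{Shade}F$, and defined $\mu(F)$ to be $F\cup\{\varepsilon(F)\}$ or $F\setminus\{\varepsilon(F)\}$ according to whether $\varepsilon(F)\notin F$ or $\varepsilon(F)\in F$. We showed there that $\mu$ is a complete matching of $\mathcal{A}$. So the only thing left to prove is acyclicity, i.e.\ that there is no tuple $(B_1,\ldots,B_n)$ of distinct sets in $\mathcal{A}$ with $n\ge 2$ satisfying $\mu(B_i)\prec B_i$ for all $i$, $\mu(B_i)\prec B_{i+1}$ for $i<n$, and $\mu(B_n)\prec B_1$.

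The key observation is that, by the analysis in the proof of Theorem \ref{thm.elser-Shade0}, $\operatorname*{Shade}F$ is unchanged when $F$ is replaced by $\mu(F)$, and hence $\varepsilon$ is a function of $\operatorname*{Shade}F$ alone. Now suppose such a tuple existed. For each $i$, the condition $\mu(B_i)\prec B_i$ means $B_i = \mu(B_i)\cup\{\varepsilon(B_i)\}$ with $\varepsilon(B_i)\in B_i$; write $e_i := \varepsilon(B_i) = \varepsilon(\mu(B_i))$, so $\mu(B_i) = B_i\setminus\{e_i\}$. The condition $\mu(B_i)\prec B_{i+1}$ means $B_{i+1} = \mu(B_i)\cup\{b\}$ for some $b\notin\mu(B_i)$. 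First I would argue $b = e_i$ is impossible (that would force $B_{i+1}=B_i$, contradicting distinctness), so $b\ne e_i$; thus $B_{i+1}$ and $B_i$ both contain $\mu(B_i)$, and $B_{i+1} = (\mu(B_i))\cup\{b\} = B_i\setminus\{e_i\}\cup\{b\}$, whereas $B_i = \mu(B_i)\cup\{e_i\}$. In particular $e_i\notin B_{i+1}$ and $b\notin B_i$. Next, since $b\in B_{i+1}$ and $\mu(B_{i+1}) = B_{i+1}\setminus\{e_{i+1}\}\prec B_{i+1}$, I would compare $e_{i+1}$ with $e_i$: because $e_i = \varepsilon(\mu(B_i)) = \varepsilon(B_{i+1}\setminus\{b\})$ and $e_i\notin \operatorname*{Shade}(\mu(B_i)) = \operatorname*{Shade}(B_{i+1}\setminus\{b\})$, and because $\operatorname*{Shade}$ is monotone (Lemma \ref{lem.Shade-monoton}), removing $b$ can only shrink the shade, so $e_i\notin\operatorname*{Shade}(B_{i+1})$ would follow once we know $b\notin\operatorname*{Shade}(B_{i+1}\setminus\{b\})$ --- but that is exactly the hypothesis $b\notin\operatorname*{Shade}(\mu(B_i))$, which is false since typically $b$ may well be in that shade. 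So the cleaner route is: $e_i$ is the smallest element of $G\setminus\operatorname*{Shade}(\mu(B_i)) = G\setminus\operatorname*{Shade}(B_i)$, and $e_{i+1}$ is the smallest element of $G\setminus\operatorname*{Shade}(B_{i+1})$. By Lemma \ref{lem.Shade-tog}, since $e_i\notin\operatorname*{Shade}(B_i\setminus\{e_i\})=\operatorname*{Shade}(\mu(B_i))$ and $B_{i+1}=\mu(B_i)\cup\{b\}$, either $b\in\operatorname*{Shade}(\mu(B_i))$ is false (giving $\operatorname*{Shade}(B_{i+1})=\operatorname*{Shade}(\mu(B_i))$, hence $e_{i+1}=e_i$ and thus $B_{i+1}=B_i$, contradiction) or $b\in\operatorname*{Shade}(\mu(B_i))$; so we must be in the latter case, i.e.\ $b\in\operatorname*{Shade}(B_i)$. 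Then $\operatorname*{Shade}(B_{i+1})\supseteq\operatorname*{Shade}(\mu(B_i))$ and $e_{i+1}\le e_i$, with equality only if $e_i\notin\operatorname*{Shade}(B_{i+1})$. I would then show $e_{i+1}<e_i$ strictly: if $e_{i+1}=e_i$ then $e_i\notin\operatorname*{Shade}(B_{i+1})$, and by Lemma \ref{lem.Shade-tog} applied at $B_{i+1}$ we'd get $\operatorname*{Shade}(B_{i+1}\setminus\{e_i\})=\operatorname*{Shade}(B_{i+1})$; but $B_{i+1}\setminus\{e_i\} = \mu(B_i)\setminus\{e_i\}\cup\{b\}=(B_i\setminus\{e_i\})\cup\{b\}\setminus\{e_i\}$ --- this forces $B_{i+1}=B_i\cup\{b\}\setminus\{e_i\}$ and $e_i\notin B_{i+1}$, and one checks the shades force $B_i$ and $B_{i+1}$ to have the same $\varepsilon$, so $\mu(B_i)\prec B_i$ and $\mu(B_i)\prec B_{i+1}$ with $B_i\ne B_{i+1}$ is consistent only if the strict inequality indeed fails --- so I need to be more careful here.

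The robust way, which I would actually write up, is this: by Lemma \ref{lem.Shade-tog}, $\operatorname*{Shade}(B_i) = \operatorname*{Shade}(\mu(B_i))$ for every $i$ (removing the non-shade element $e_i$), and similarly $\operatorname*{Shade}(B_i) = \operatorname*{Shade}(\mu(B_i)\cup\{e_i\})$. From $\mu(B_i)\prec B_{i+1}$ we get $\mu(B_i)\subseteq B_{i+1}$, so by Lemma \ref{lem.Shade-monoton}, $\operatorname*{Shade}(\mu(B_i)) \subseteq \operatorname*{Shade}(B_{i+1})$, hence $\operatorname*{Shade}(B_i)\subseteq\operatorname*{Shade}(B_{i+1})$, hence $G\setminus\operatorname*{Shade}(B_{i+1})\subseteq G\setminus\operatorname*{Shade}(B_i)$, hence $e_{i+1} = \varepsilon(B_{i+1}) \ge \varepsilon(B_i) = e_i$ since the minimum over a smaller set is at least as large --- wait, that gives $e_{i+1}\ge e_i$, the \emph{wrong} direction for a contradiction. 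Let me recheck: $e_i = \min(G\setminus\operatorname*{Shade}(B_i))$, and the set for $i+1$ is \emph{contained} in the set for $i$, so its min is $\ge e_i$. Thus $e_1 \le e_2 \le \cdots \le e_n \le e_1$ using $\mu(B_n)\prec B_1$, forcing all $e_i$ equal, say to $e$. Then all $\operatorname*{Shade}(B_i)$ restricted to the question "is $e$ the smallest missing element" agree; more strongly, $\operatorname*{Shade}(B_1)\subseteq\operatorname*{Shade}(B_2)\subseteq\cdots\subseteq\operatorname*{Shade}(B_n)\subseteq\operatorname*{Shade}(B_1)$, so all the $\operatorname*{Shade}(B_i)$ are equal, call it $S$. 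Now each $\mu(B_i) = B_i\setminus\{e\}$ and $\operatorname*{Shade}(\mu(B_i)) = S$ too. From $\mu(B_i)\prec B_{i+1}$ and $\mu(B_{i+1})\prec B_{i+1}$ we get $B_{i+1} = \mu(B_i)\cup\{x_i\} = \mu(B_{i+1})\cup\{e\}$ for some $x_i$; since $\mu(B_{i+1}) = B_{i+1}\setminus\{e\}$ and $\mu(B_i) = B_i\setminus\{e\}$ with $e\notin\mu(B_i)$, the element $x_i$ added to get $B_{i+1}$ from $\mu(B_i)$ must be $e$ itself (because $B_{i+1}\ni e$ but $\mu(B_i)\not\ni e$ and they differ in one element). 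Hence $B_{i+1} = \mu(B_i)\cup\{e\} = (B_i\setminus\{e\})\cup\{e\} = B_i$, contradicting that the $B_i$ are distinct (recall $n\ge 2$). This contradiction proves there is no such tuple, so $\mu$ is acyclic, and therefore $\mathcal{A}$ is collapsible.

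I expect the main obstacle to be the bookkeeping around the direction of the inequality among the $e_i$ and nailing the exact spot where distinctness is violated; the chain $e_1\le e_2\le\cdots\le e_n\le e_1$ collapsing to equality, followed by the shades all coinciding and the "one-element difference" argument pinning $x_i = e$, is the crux. Everything else is a direct invocation of Lemma \ref{lem.Shade-monoton}, Lemma \ref{lem.Shade-tog}, and the already-established fact that $\operatorname*{Shade}$ (and hence $\varepsilon$) is constant on matched pairs.
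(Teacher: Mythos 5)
Your ``robust way'' is correct and is essentially the paper's proof: reuse $\mu$ from Theorem \ref{thm.elser-Shade0}, observe via Lemma \ref{lem.Shade-tog} and Lemma \ref{lem.Shade-monoton} that $\operatorname{Shade}(B_i)=\operatorname{Shade}(\mu(B_i))\subseteq\operatorname{Shade}(B_{i+1})$ cyclically, hence all $\operatorname{Shade}(B_i)$ (and all $\varepsilon(B_i)$) coincide, and then the one-element-difference observation (your $x_i=e$ step, which the paper packages as Lemma \ref{lem.elser-coll.lem}) forces two of the $B_i$ to be equal. When writing this up you should excise the earlier false starts and keep only the final argument, but the mathematics there is sound.
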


Collapsible simplicial complexes are well-behaved in various ways -- in
particular, they are contractible (\cite[Corollary 9.19]{Kozlov20}), and thus
have trivial homotopy and homology groups (in positive degrees). Moreover, the
reduced Euler characteristic of any collapsible simplicial complex is $0$ (for
obvious reasons: having a complete matching suffices, even if it is not
acyclic); thus, Theorem \ref{thm.elser-Shade0} follows from Theorem
\ref{thm.elser-coll}.

Our proof of Theorem \ref{thm.elser-coll} will rely on the following simple
lemma (whose proof is left as an exercise):

\begin{lemma}
\label{lem.elser-coll.lem}Let $X$ and $Y$ be two sets, and let $u\in X\cap Y$.
If $X\setminus\left\{  u\right\}  \prec Y$, then $X=Y$.
\end{lemma}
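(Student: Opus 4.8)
The plan is to simply unfold the definition of the relation $\prec$ (Definition \ref{def.sets.prec-sets}) and pin down the single element in which $X\setminus\left\{u\right\}$ and $Y$ differ; everything then follows by elementary set manipulation.

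First I would record what the hypothesis says: by Definition \ref{def.sets.prec-sets}, the relation $X\setminus\left\{u\right\}\prec Y$ means that $X\setminus\left\{u\right\}\subseteq Y$ and $\left\vert Y\setminus\left(X\setminus\left\{u\right\}\right)\right\vert=1$. Writing $\left\{b\right\}=Y\setminus\left(X\setminus\left\{u\right\}\right)$ for the unique element of this difference, we get $Y=\left(X\setminus\left\{u\right\}\right)\cup\left\{b\right\}$. Next I would use the remaining hypothesis $u\in X\cap Y$: since $u\in Y$ but $u\notin X\setminus\left\{u\right\}$, we have $u\in Y\setminus\left(X\setminus\left\{u\right\}\right)=\left\{b\right\}$, which forces $b=u$. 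Hence $Y=\left(X\setminus\left\{u\right\}\right)\cup\left\{u\right\}$, and since $u\in X$ this equals $X$; therefore $X=Y$.

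There is essentially no obstacle here — the statement is a one-line set-theoretic fact, and the proof is purely a matter of bookkeeping. The only point that requires a moment's care is to keep $X$ and $X\setminus\left\{u\right\}$ distinct in one's head: the element $u$ belongs to $X$ but has been removed on the left-hand side of $\prec$, and it is exactly the reappearance of $u$ in $Y$ that identifies the "new" element $b$ as $u$ and thus lets us reconstruct all of $X$ from $Y$.
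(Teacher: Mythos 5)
Your proof is correct; the paper leaves this lemma as an exercise, and your argument (unfolding $\prec$ to get $Y=\left(X\setminus\left\{u\right\}\right)\cup\left\{b\right\}$, then using $u\in Y$ and $u\notin X\setminus\left\{u\right\}$ to force $b=u$) is exactly the intended routine verification.
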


\begin{proof}
[of Theorem \ref{thm.elser-coll}]We know from Proposition \ref{prop.elser-sc}
that $\mathcal{A}$ is a simplicial complex. It remains to show that
$\mathcal{A}$ is collapsible.

Note that our $\mathcal{A}$ is precisely the set $\mathcal{A}$ defined in the
proof of Theorem \ref{thm.elser-Shade0} above.

We equip the finite set $E$ with a total order (chosen arbitrarily).

If $F\in\mathcal{A}$, then we define the edge $\varepsilon\left(  F\right)
\in G\setminus\operatorname*{Shade}F$ as in the proof of Theorem
\ref{thm.elser-Shade0}.

We define a complete matching $\mu:\mathcal{A}\rightarrow\mathcal{A}$ of
$\mathcal{A}$ as in the proof of Theorem \ref{thm.elser-Shade0}. We shall now
prove that this complete matching $\mu$ is acyclic.

Indeed, let $\left(  B_{1},B_{2},\ldots,B_{n}\right)  $ be a tuple of distinct
sets $B_{1},B_{2},\ldots,B_{n}\in\mathcal{A}$ with the property that $n\geq2$
and that%
\begin{equation}
\mu\left(  B_{i}\right)  \prec B_{i}\ \ \ \ \ \ \ \ \ \ \text{for each }%
i\in\left\{  1,2,\ldots,n\right\}  \label{pf.thm.elser-coll.cyc0}%
\end{equation}
and%
\begin{equation}
\mu\left(  B_{i}\right)  \prec B_{i+1}\ \ \ \ \ \ \ \ \ \ \text{for each }%
i\in\left\{  1,2,\ldots,n-1\right\}  \label{pf.thm.elser-coll.cyc1}%
\end{equation}
and%
\begin{equation}
\mu\left(  B_{n}\right)  \prec B_{1}. \label{pf.thm.elser-coll.cyc2}%
\end{equation}
We shall derive a contradiction.

Set $B_{n+1}=B_{1}$. Then, combining (\ref{pf.thm.elser-coll.cyc1}) with
(\ref{pf.thm.elser-coll.cyc2}), we conclude that%
\begin{equation}
\mu\left(  B_{i}\right)  \prec B_{i+1}\ \ \ \ \ \ \ \ \ \ \text{for each }%
i\in\left\{  1,2,\ldots,n\right\}  . \label{pf.thm.elser-coll.cyc12}%
\end{equation}

We now claim the following:

\begin{statement}
\textit{Claim 1:} We have $\operatorname*{Shade}\left(  B_{i}\right)
\subseteq\operatorname*{Shade}\left(  B_{i+1}\right)  $ for each $i\in\left\{
1,2,\ldots,n\right\}  $.
\end{statement}

[\textit{Proof of Claim 1:} Let $i\in\left\{  1,2,\ldots,n\right\}  $. From
(\ref{pf.thm.elser-coll.cyc0}), we see that $\mu\left(  B_{i}\right)  \prec
B_{i}$, so that $\varepsilon\left(  B_{i}\right)  \in B_{i}$ (by the
definition of $\mu$). Set $u=\varepsilon\left(  B_{i}\right)  $. Then, the
definition of $\mu$ yields $\mu\left(  B_{i}\right)  =B_{i}\setminus\left\{
u\right\}  $.

However, $u=\varepsilon\left(  B_{i}\right)  \in G\setminus
\operatorname*{Shade}\left(  B_{i}\right)  $ (by the definition of
$\varepsilon\left(  B_{i}\right)  $). In other words, $u\in G$ and
$u\notin\operatorname*{Shade}\left(  B_{i}\right)  $. Therefore,
(\ref{eq.lem.Shade-tog.diff}) (applied to $F=B_{i}$) yields
$\operatorname*{Shade}\left(  B_{i}\setminus\left\{  u\right\}  \right)
=\operatorname*{Shade}\left(  B_{i}\right)  $. This can be rewritten as
$\operatorname*{Shade}\left(  \mu\left(  B_{i}\right)  \right)
=\operatorname*{Shade}\left(  B_{i}\right)  $ (since $\mu\left(  B_{i}\right)
=B_{i}\setminus\left\{  u\right\}  $).

But (\ref{pf.thm.elser-coll.cyc12}) yields $\mu\left(  B_{i}\right)  \prec
B_{i+1}$, so that $\mu\left(  B_{i}\right)  \subseteq B_{i+1}$ and thus
$\operatorname*{Shade}\left(  \mu\left(  B_{i}\right)  \right)  \subseteq
\operatorname*{Shade}\left(  B_{i+1}\right)  $ (by Lemma
\ref{lem.Shade-monoton}). In view of $\operatorname*{Shade}\left(  \mu\left(
B_{i}\right)  \right)  =\operatorname*{Shade}\left(  B_{i}\right)  $, this can
be rewritten as $\operatorname*{Shade}\left(  B_{i}\right)  \subseteq
\operatorname*{Shade}\left(  B_{i+1}\right)  $. This proves Claim 1.]

Claim 1 shows that $\operatorname*{Shade}\left(  B_{1}\right)  \subseteq
\operatorname*{Shade}\left(  B_{2}\right)  \subseteq\cdots\subseteq
\operatorname*{Shade}\left(  B_{n}\right)  \subseteq\operatorname*{Shade}%
\left(  B_{n+1}\right)  $. This chain of inclusions is circular (since
$B_{n+1}=B_{1}$); thus, it must be a chain of equalities. Hence,
$\operatorname*{Shade}\left(  B_{n}\right)  =\operatorname*{Shade}\left(
B_{1}\right)  $. Thus, $\varepsilon\left(  B_{n}\right)  =\varepsilon\left(
B_{1}\right)  $ (since $\varepsilon\left(  F\right)  $ depends only on
$\operatorname*{Shade}F$, not on $F$ itself).

Set $u=\varepsilon\left(  B_{n}\right)  $. Thus, $u=\varepsilon\left(
B_{n}\right)  =\varepsilon\left(  B_{1}\right)  $.

We have $\mu\left(  B_{n}\right)  \prec B_{n}$ (by
(\ref{pf.thm.elser-coll.cyc0})); thus, the definition of $\mu$ yields
$\varepsilon\left(  B_{n}\right)  \in B_{n}$ and $\mu\left(  B_{n}\right)
=B_{n}\setminus\left\{  \varepsilon\left(  B_{n}\right)  \right\}
=B_{n}\setminus\left\{  u\right\}  $ (since $\varepsilon\left(  B_{n}\right)
=u$).

However, $u=\varepsilon\left(  B_{n}\right)  \in B_{n}$ and similarly $u\in
B_{1}$. Thus, $u\in B_{n}\cap B_{1}$. Also, $B_{n}\setminus\left\{  u\right\}
=\mu\left(  B_{n}\right)  \prec B_{1}$ (by (\ref{pf.thm.elser-coll.cyc2})).
Hence, Lemma \ref{lem.elser-coll.lem} (applied to $X=B_{n}$ and $Y=B_{1}$)
yields $B_{n}=B_{1}$. This contradicts the fact that the sets $B_{1}%
,B_{2},\ldots,B_{n}$ are distinct (since $n\geq2$).

Forget that we fixed $\left(  B_{1},B_{2},\ldots,B_{n}\right)  $. We thus have
found a contradiction whenever $\left(  B_{1},B_{2},\ldots,B_{n}\right)  $ is
a tuple of distinct sets $B_{1},B_{2},\ldots,B_{n}\in\mathcal{A}$ with the
property that $n\geq2$ and that (\ref{pf.thm.elser-coll.cyc0}) and
(\ref{pf.thm.elser-coll.cyc1}) and (\ref{pf.thm.elser-coll.cyc2}). Hence,
there exists no such tuple. In other words, the complete matching $\mu$ is
acyclic. Therefore, the simplicial complex $\mathcal{A}$ is collapsible (by
Definition \ref{def.matchings} \textbf{(b)}). This finishes the proof of
Theorem \ref{thm.elser-coll}.
\end{proof}

The analogues of Proposition \ref{prop.elser-sc} and of Theorem
\ref{thm.elser-coll} for vertex-infection (instead of usual infection) also
hold (with the same proofs). More generally, Proposition \ref{prop.elser-sc}
and Theorem \ref{thm.elser-coll} can be generalized to any
inclusion-preserving shade map:

\begin{theorem}
\label{thm.Shade.coll}Let $E$ be any set. Let $\operatorname*{Shade}%
:\mathcal{P}\left(  E\right)  \rightarrow\mathcal{P}\left(  E\right)  $ be an
inclusion-preserving shade map on $E$. Let $G$ be any subset of $E$. Let%
\[
\mathcal{A}=\left\{  F\subseteq E\ \mid\ G\not \subseteq \operatorname*{Shade}%
F\right\}  .
\]
Then:

\textbf{(a)} This $\mathcal{A}$ is a simplicial complex on ground set $E$.

\textbf{(b)} This simplicial complex $\mathcal{A}$ is collapsible.
\end{theorem}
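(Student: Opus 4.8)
The plan is to reduce Theorem~\ref{thm.Shade.coll} to a verbatim transplant of the proofs of Proposition~\ref{prop.elser-sc} and Theorem~\ref{thm.elser-coll}, observing that nothing in those arguments used the graph $\Gamma$ beyond the two shade-map axioms plus inclusion-preservation. So first I would prove part \textbf{(a)}: if $F\in\mathcal{A}$ (so $G\not\subseteq\operatorname*{Shade}F$) and $Q\subseteq F$, then inclusion-preservation gives $\operatorname*{Shade}Q\subseteq\operatorname*{Shade}F$, hence $G\not\subseteq\operatorname*{Shade}Q$, so $Q\in\mathcal{A}$; thus $\mathcal{A}$ is down-closed, i.e.\ a simplicial complex. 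This is the same one-line argument as Proposition~\ref{prop.elser-sc}, now using Definition~\ref{def.inclusion-pres}\textbf{(a)} in place of Lemma~\ref{lem.Shade-monoton}.

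For part \textbf{(b)} I would fix an arbitrary total order on $E$ and, for each $F\in\mathcal{A}$, let $\varepsilon(F)$ be the smallest element of $G\setminus\operatorname*{Shade}F$ (nonempty since $G\not\subseteq\operatorname*{Shade}F$). As in the proof of Theorem~\ref{thm.elser-Shade0}, the shade-map axioms (Axioms~1 and~2, i.e.\ equations analogous to \eqref{eq.lem.Shade-tog.union} and \eqref{eq.lem.Shade-tog.diff}) show that toggling $\varepsilon(F)$ in or out of $F$ leaves $\operatorname*{Shade}F$ unchanged, hence leaves $\varepsilon(F)$ unchanged and keeps the result in $\mathcal{A}$; so $\mu(F)=F\cup\{\varepsilon(F)\}$ if $\varepsilon(F)\notin F$ and $\mu(F)=F\setminus\{\varepsilon(F)\}$ otherwise defines a complete matching of $\mathcal{A}$. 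Then I would show $\mu$ is acyclic by copying the argument in the proof of Theorem~\ref{thm.elser-coll} almost word for word: given a putative cycle $(B_1,\dots,B_n)$ with $n\ge 2$, $\mu(B_i)\prec B_i$ and $\mu(B_i)\prec B_{i+1}$ (indices mod $n$), Claim~1 shows $\operatorname*{Shade}(B_i)\subseteq\operatorname*{Shade}(B_{i+1})$ for each $i$ --- using Axiom~2 to get $\operatorname*{Shade}(\mu(B_i))=\operatorname*{Shade}(B_i)$ and inclusion-preservation to get $\operatorname*{Shade}(\mu(B_i))\subseteq\operatorname*{Shade}(B_{i+1})$ --- so the circular chain of inclusions collapses to equalities, forcing $\varepsilon(B_n)=\varepsilon(B_1)=:u$; then $u\in B_n\cap B_1$ and $B_n\setminus\{u\}=\mu(B_n)\prec B_1$, so Lemma~\ref{lem.elser-coll.lem} yields $B_n=B_1$, contradicting distinctness. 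Hence $\mu$ is acyclic and $\mathcal{A}$ is collapsible.

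The only point requiring a moment's care --- and the one place the argument genuinely needs inclusion-preservation rather than just shade-map-ness --- is Claim~1, where one must know $\operatorname*{Shade}(\mu(B_i))\subseteq\operatorname*{Shade}(B_{i+1})$ from $\mu(B_i)\subseteq B_{i+1}$; in the original proof this was Lemma~\ref{lem.Shade-monoton}, and here it is exactly the hypothesis that $\operatorname*{Shade}$ is inclusion-preserving. Everything else goes through unchanged. I would therefore write the proof as: ``Part~\textbf{(a)} follows as in Proposition~\ref{prop.elser-sc}, with Lemma~\ref{lem.Shade-monoton} replaced by inclusion-preservation. Part~\textbf{(b)} follows as in Theorem~\ref{thm.elser-coll}, with the same replacement, and with \eqref{eq.lem.Shade-tog.union} and \eqref{eq.lem.Shade-tog.diff} replaced by Axioms~1 and~2 of Definition~\ref{def.Shade.shademap}.'' I do not expect any real obstacle here; the content is entirely in having already isolated the shade-map axioms, and this theorem is essentially the payoff of that abstraction.
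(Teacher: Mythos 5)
Your proposal is correct and matches the paper's approach exactly: the paper's proof of Theorem~\ref{thm.Shade.coll} simply asserts that the proofs of Proposition~\ref{prop.elser-sc} and Theorem~\ref{thm.elser-coll} generalize mutatis mutandis, which is precisely the transplantation you carry out, correctly identifying inclusion-preservation as the substitute for Lemma~\ref{lem.Shade-monoton} and Axioms~1 and~2 as the substitutes for Lemma~\ref{lem.Shade-tog}.
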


\begin{proof}
Part \textbf{(a)} is a straightforward generalization of Proposition
\ref{prop.elser-sc}, while part \textbf{(b)} is a straightforward
generalization of Theorem \ref{thm.elser-coll}. The proofs we gave above
generalize (mutatis mutandis).
\end{proof}

However, Theorem \ref{thm.Shade.coll} cannot be lifted to the full generality
of arbitrary shade maps, since $\mathcal{A}$ will generally not be a
simplicial complex unless the shade map is inclusion-preserving. (However, for
inclusion-reversing shade maps, we can obtain a variant of Theorem
\ref{thm.Shade.coll} by applying Theorem \ref{thm.Shade.coll} to the dual
shade map $\operatorname*{Shade}\nolimits^{\prime}$ from Proposition
\ref{prop.Shade.dual}.)

\section{Open questions}

I shall now comment on two natural directions of research so far unexplored.

\subsection{The Alexander dual}

Any simplicial complex has an \emph{Alexander dual}, which is defined as follows:

\begin{definition}
Let $E$ be a finite set. Let $\mathcal{A}$ be a simplicial complex on ground
set $E$. Then, we define a new simplicial complex $\mathcal{A}^{\vee}$ on
ground set $E$ by%
\[
\mathcal{A}^{\vee}=\left\{  F\subseteq E\ \mid\ E\setminus F\notin%
\mathcal{A}\right\}  .
\]
(That is, $\mathcal{A}^{\vee}$ consists of those subsets of $E$ whose
complements don't belong to $\mathcal{A}$.) This simplicial complex
$\mathcal{A}^{\vee}$ is called the \emph{Alexander dual} of $\mathcal{A}$.
\end{definition}

It is well-known that a simplicial complex $\mathcal{A}$ and its Alexander
dual $\mathcal{A}^{\vee}$ share many properties; in particular, the reduced
homology of $\mathcal{A}$ is isomorphic to the reduced cohomology of
$\mathcal{A}^{\vee}$ (see, e.g., \cite[Theorem 1.1]{BjoTan09}). However, the
collapsibility and the homotopy types of $\mathcal{A}$ and $\mathcal{A}^{\vee
}$ are not always related. Thus, the following question is suggested but not
answered by Theorem \ref{thm.elser-coll}:

\begin{question}
\label{quest.elser-dual}Let us use the notations from Section \ref{sec.elser}
as well as Definition \ref{def.Shade}. Let $G$ be any subset of $E$. Define
$\mathcal{A}$ as in (\ref{eq.prop.elser-sc.A=}). Is the simplicial complex
\[
\mathcal{A}^{\vee}=\left\{  F\subseteq E\ \mid\ G\subseteq
\operatorname*{Shade}\left(  E\setminus F\right)  \right\}
\]
collapsible? Is it contractible?
\end{question}

\subsection{Several vertices $v$}

Elser's nuclei-based viewpoint in \cite{Elser84} (and \cite[Conjecture
9.1]{DHLetc19}) suggests yet another question.

Our definition of $\operatorname*{Shade}F$ (Definition \ref{def.Shade}), and
the underlying notion of ``infecting'' an
edge, implicitly relied on the choice of vertex $v$. It thus is advisable to
rename the set $\operatorname*{Shade}F$ as $\operatorname*{Shade}%
\nolimits_{v}F$ and combine such sets for different values of $v$. In
particular, we can define%
\[
\mathcal{A}_{U}=\left\{  F\subseteq E\ \mid\ G\not \subseteq
\operatorname*{Shade}\nolimits_{v}F\text{ for some }v\in U\right\}
\]
for any subset $U$ of $V$. This $\mathcal{A}_{U}$ is a simplicial complex
(being the union of a family of simplicial complexes), and thus we can ask the
same questions about it as we did about $\mathcal{A}$:

\begin{question}
\label{quest.elser-multi}What can we say about the homotopy and discrete Morse
theory of $\mathcal{A}_{U}$ ? What about its Alexander dual?
\end{question}

For $G=E$ and $\left\vert U\right\vert >0$, this simplicial complex
$\mathcal{A}_{U}$ is the Alexander dual of the ``$U$-nucleus
complex'' $\Delta_{U}^{G}$ from \cite[Definition
3.2]{DHLetc19} (when $G$ is connected). If \cite[Conjecture 9.1 for
$\left\vert U\right\vert >1$]{DHLetc19} is correct, then the homology of
$\mathcal{A}_{U}$ with real coefficients should be concentrated in a single
degree; this suggests the possible existence of an acyclic partial matching
with all critical faces in one degree.

\acknowledgements\label{sec:ack} I thank Anders Bj\"{o}rner, Galen
Dorpalen-Barry, Dmitry Feichtner-Kozlov, Patricia Hersh, Sam Hopkins, Vic
Reiner, Tom Roby and Richard Stanley for insightful conversations. A referee
has greatly simplified the proof of Theorem \ref{thm.elser-coll}. This
research was supported through the programme ``Oberwolfach
Leibniz Fellows'' by the Mathematisches Forschungsinstitut
Oberwolfach in 2020. I am deeply grateful to the Institute for its hospitality
during a none-too-hospitable time.

\nocite{*}
\bibliographystyle{abbrvnat}
\bibliography{elsersum-journal}
\label{sec:biblio}

\end{document}